\newcommand{\brk}[1]{\left(#1\right)}          
\newcommand{\Brk}[1]{\left[#1\right]}          
\newcommand{\BRK}[1]{\left\{#1\right\}}        
\newcommand{\Abs}[1]{\left| #1 \right|}        
\newcommand{\Cases}[1]{\begin{cases} #1 \end{cases}}
\newcommand{\beq}{\begin{equation}}
\newcommand{\eeq}{\end{equation}}
\providecommand{\half}{\frac{1}{2}}
\newcommand{\calE}{\mathcal{E}}
\newcommand{\calM}{\mathcal{M}}
\newcommand{\calN}{\mathcal{N}}
\newcommand{\calS}{\mathcal{S}}
\newcommand{\calW}{\mathcal{W}}
\newcommand{\scrX}{\mathscr{X}}
\newcommand{\R}{\mathbb{R}}
\newcommand{\Textand}{\qquad\text{ and }\qquad}
\newcommand{\tr}{\operatorname{tr}}
\newcommand{\dist}{\operatorname{dist}}
\newcommand{\SFF}{\operatorname{II}}
\newcommand{\metric}{\mathfrak{g}}
\newcommand{\euc}{\mathfrak{e}}
\newcommand{\go}{{\metric}}
\newcommand{\ao}{{\mathfrak{a}}}
\newcommand{\bo}{{\mathfrak{b}}}
\newcommand{\co}{{\mathfrak{c}}}
\newcommand{\q}{{\mathfrak{q}}}
\newcommand{\qo}{{\q}}
\newcommand{\po}{{\mathfrak{p}}}
\newcommand{\tqo}{\tilde{\q}}
\newcommand{\hf}{\hat{f}}
\newcommand{\rr}{reduced}
\newcommand{\weakly}{{\rightharpoonup}}
\newcommand{\Volume}{d{\text{vol}_{\go}}}
\newcommand{\VolumeS}{d{\text{vol}_{\go|_\S}}}
\newcommand{\Volumet}{d{\text{vol}_{\tilde\go}}}
\newcommand{\W}{{\Omega}}
\renewcommand{\S}{{\calS}}
\newcommand{\NS}{{\calN\S}}
\newcommand{\SO}[1]{\text{$SO(#1)$}}
\newcommand{\GL}[1]{\text{$GL(#1)$}}
\newcommand{\id}{\operatorname{Id}}
\newcommand{\Ppar}{{P^\parallel}}
\newcommand{\Pparq}{P^\parallel_\qo}
\newcommand{\Pperpq}{P^\perp_\qo}
\newcommand{\Pperp}{{P^\perp}}
\newcommand{\ipar}{{\iota^\parallel}}
\newcommand{\iperp}{{\iota^\perp}}
\newcommand{\Dpar}{{\nabla^\parallel}}
\newcommand{\Dperp}{{\nabla^\perp}}
\newcommand{\qpar}{\qo^\parallel}
\newcommand{\qperp}{\qo^\perp}
\newcommand{\upi}{\underline{\pi_\star}}
\newcommand{\piPush}{\pi_\star}
\newcommand{\wpi}{\widetilde{\pi}}
\newcommand{\hpi}{\widehat{\pi}}
\newcommand{\hhpi}{\overline{\pi}}
\newcommand{\e}{\operatorname{e}}
\newcommand{\wexp}{\widetilde{\e}}
\newcommand{\hexp}{\widehat{\e}}
\newcommand{\hhexp}{\overline{\e}}
\newcommand{\wvarpi}{\widetilde{\varpi}}
\newcommand{\walpha}{\widetilde{\alpha}}
\newcommand{\Om}{\Xi}
\newcommand{\wOm}{\tilde{\Om}}
\def\Xint#1{\mathchoice
{\XXint\displaystyle\textstyle{#1}}%
{\XXint\textstyle\scriptstyle{#1}}%
{\XXint\scriptstyle\scriptscriptstyle{#1}}%
{\XXint\scriptscriptstyle\scriptscriptstyle{#1}}%
\!\int}
\def\XXint#1#2#3{{\setbox0=\hbox{$#1{#2#3}{\int}$ }
\vcenter{\hbox{$#2#3$ }}\kern-.6\wd0}}
\def\dashint{\Xint-}
\newcommand{\intW}{\int_{\W_h}}
\newcommand{\dashintW}{\dashint_{\W_h}}
\newcommand{\limH}{\lim_{h\to0}}
\newcommand{\liminfH}{\liminf_{h\to0}}
\newcommand{\Elim}{\calE_{\text{lim}}}
\newcommand{\Eh}{\calE_{h}}
\newcommand{\tEh}{\tilde{\calE}_{h}}
\newcommand{\Gh}{G_{h}}
\newtheorem{theorem}{Theorem}[section]
\newtheorem{lemma}{Lemma}[section]
\newtheorem{proposition}{Proposition}[section]
\newtheorem{corollary}{Corollary}[section]
\newtheorem{definition}{Definition}[section]
\newenvironment{proof}{{\flushleft \emph{Proof}:}}{\hfill\ding{110}}
\numberwithin{equation}{section}
\numberwithin{lemma}{section}
\numberwithin{definition}{section}
\numberwithin{theorem}{section}
\numberwithin{proposition}{section}
\numberwithin{corollary}{section}
\begin{document}

\title{A Riemannian Approach to Reduced Plate,  Shell, and Rod Theories}
\author{Raz Kupferman and Jake P. Solomon \\
\\
Institute of Mathematics, \\
The Hebrew University, \\
Jerusalem 91904 Israel}

\maketitle


\begin{abstract}
We derive a dimensionally-reduced limit theory for an $n$-dimensional nonlinear elastic body that is slender along $k$ dimensions.
The starting point is to view an elastic body as an $n$-dimensional Riemannian manifold together with a not necessarily isometric $W^{1,2}$-immersion in $n$-dimensional Euclidean space. The equilibrium configuration is the immersion that minimizes the average discrepancy between the induced and intrinsic metrics. The dimensionally reduced limit theory views the elastic body as a $k$-dimensional Riemannian manifold along with an isometric $W^{2,2}$-immersion in $n$-dimensional Euclidean space and linear data in the normal directions. The equilibrium configuration minimizes a functional depending on the average covariant derivatives of the linear data. The dimensionally-reduced limit is obtained using a $\Gamma$-convergence approach.  The limit includes as particular cases plate, shell, and rod theories. It applies equally to ``standard" elasticity and to ``incompatible" elasticity, thus including as particular cases so-called non-Euclidean plate, shell, and rod theories.
\end{abstract}

\tableofcontents

\section{Introduction}

The derivation of dimensionally-reduced elastic theories is a longstanding theme in material sciences, which goes back as far as to Euler, D. Bernoulli, Cauchy, and Kirchhoff \cite{Kir50}, and in the last century, to name just a few, to von Karman \cite{Kar10}, E. and F. Cosserat, Love \cite{Lov27}, and Koiter \cite{Koi66}. In essence, the problem is the following: Elasticity theory models the static and dynamic properties of three-dimensional bodies. Unless simplifying assumptions can be made, these models are highly nonlinear and notoriously complex. In many cases, however, one is interested in elastic bodies that are slender across one or two dimensions. In such cases it is appealing to model the body as an object of lower dimension---either a surface or a curve, depending on the number of slender axes. The challenge  is to derive reduced models for surface-like or curve-like bodies, departing from the full three-dimensional model.

This is the context to which belong \emph{plate}, \emph{shell}, and \emph{rod} theories \cite{TW59,Koi66}. Plates and shells are thin elastic sheets that are modeled as two-dimensional surfaces; plates are in a state of mechanical rest when they are planar, whereas shells are at rest in a non-planar configuration. Rods are thin and elongated bodies that are modeled as one-dimensional curves.

Elastostatics can be formulated using a variational approach, where the equilibrium configuration of the body is a minimizer of an energy functional defined on the space of configurations.
Until quite recently, the derivation of dimensionally-reduced models from three-dimensional elasticity had been based mostly on ansatzes for energy minimizers (e.g., the Kirchhoff-Love assumptions\cite{Kir50,Lov27}). Such an approach is non-rigorous, and in particular, different ansatzes lead to different reduced models (sometimes mutually inconsistent), a situation that has lead over the years to numerous controversies. This situation has changed drastically over the last decade with the development of new analytical methods, based on $\Gamma$-convergence, which have lead to rigorous derivations of plate \cite{FJM02b}, shell \cite{FJMM03}, and rod \cite{MM03} theories.

An underlying assumption of classical three-dimensional elastic theories is the existence of a  configuration in which the body is free of any internal stresses; this \emph{reference configuration} is unique modulo rigid transformations. Deviations from the reference configuration involve an elastic energy ``cost", and only occur in response to external forces or to the imposition of constraints, such as boundary conditions that are incompatible with the reference configuration. In the absence of such forces or constraints, the reference configuration is the equilibrium configuration.

There are many systems however, in which a stress-free configuration does not exist; such bodies are said to be \emph{residually-stressed}. The distinctive feature of a residually-stressed body is that its constituents change their shape if the body is dissected into parts (thus releasing the residual stress). The study and modeling of residually-stressed bodies has its own history, starting with the pioneering work of Bilby and co-workers \cite{BBS55,BS56} and Kondo \cite{Kon55}, followed about a decade later by
Wang \cite{Wan66,Wan67} and Kr\"oner \cite{Kro81}; most of the cited work addressed residual-stress in the context of defects and dislocations. In more recent years there has been a renewed interest in residually-stressed elastic bodies in the context of pattern formation in plants (see e.g. \cite{VG08,LM09,AESK11}) and in synthetic materials, such as thermo-responsive gels \cite{KES07}.

A residually-stressed body (made of an amorphous material) can be modeled as a three-dimensional Riemannian manifold $\calM$; the intrinsic property of the material is (local) distances between neighboring material elements.  Thus, the concept of a reference configuration is replaced by that of a \emph{reference metric} $\go$ \cite{Wan67}. A configuration of this body is a mapping $f:\calM\to\R^3$ from the Riemannian manifold into the ambient three-dimensional Euclidean space. The mapping $f$ induces on $\calM$ a metric $g = f_*\euc$, where $\euc$ is the canonical Euclidean metric in $\R^n$. The local energy density associated with the mapping $f$ is assumed to only depend on the metric discrepancy between $g$ and $\go$ (this metric discrepancy is called a \emph{strain} is the elastic context). Note that there exist also geometric treatments of residually-stressed materials in which extra-structure, in addition to the metric, is assumed; in this context see the work of Yavari and Goriely on geometric approaches to defects in solids \cite{YG12,YG12b,YG12c}.

If $\calM$ is simply-connected and the reference metric $\go$ is flat, namely, has a vanishing curvature tensor, then there exists an isometric immersion $f$ of $(\calM,\go)$ into $(\R^n,\euc)$, in which case $g=\go$, hence $f$ is a stress-free equilibrium configuration. If $\go$ is not flat, there does not exist a configuration for which $g=\go$, and the energy minimizing configuration will necessarily carry non-zero elastic energy, i.e., it will be residually-stressed. In the elastic context, the metric $\go$ is said to be \emph{incompatible}, and the elastic model is known as \emph{incompatible elasticity}.

Organisms such as plant organs, and the thermo-responsive gels studied in \cite{KES07} are  surface-like residually stressed materials. The interest in such systems has naturally lead to the development of dimensionally-reduced models in incompatible materials. A reduced theory of \emph{non-Euclidean plates} was developed in \cite{ESK08} based on a non-rigorous ansatz, followed in \cite{ESK09b} by a reduced theory of \emph{non-Euclidean shells} (we will elaborate on the distinction between the two cases in Section~\ref{sec:examples}).  Lewicka and Pakzad \cite{LP10} rigorously derived the limit of non-Euclidean plates using a $\Gamma$-convergence approach,  thus generalizing the result of Friesecke et al. \cite{FJM02b} (the latter being a particular case for a flat $\go$).

In this paper we present an analysis that generalizes in one fell swoop the derivation of (Euclidean) plate \cite{FJM02b}, shell \cite{FJMM03}, and rod \cite{MM03} theories, and (non-Euclidean) plate theory \cite{LP10}, and also provides, as particular cases, a rigorous derivation of non-Euclidean shell and rod theories, for which no current analyses exist. Specifically, we consider a family of elastic problems in which the domains $\W_h\subset\calM$ are a one-parameter family of $n$-dimensional Riemannian manifolds that are shrinking into an $(n-k)$-dimensional submanifold $\S$ as the thickness parameter $h$ tends to zero. For each such domain we consider immersions into an $n$-dimensional Euclidean space (in the physical context $n=3$ and $k=1,2$) and associate with each such immersion an energy $\Eh:W^{1,2}(\W_h;\R^n)\to\R$. Under suitable assumptions, we prove that any family of (possibly approximate) minimizers $f_h\in W^{1,2}(\W_h;\R^n)$ of $\Eh$ converges in a sense we define (modulo subsequences) as $h\to 0$ to a mapping $f\in W^{2,2}(\S;\R^n)$, which minimizes a limiting energy functional $\Elim:W^{2,2}(\S;\R^n)\to\R$. Thus, the elastic problem associated with the immersion of the $(n-k)$-dimensional manifold $\S$ into $\R^n$ is the dimensionally-reduced model in the $h\to0$ limit.

As stated above, this general result embodies the existing theories for plates, shells, and rods, the distinction between the first two cases being solely a property of the reference metric $\go$, whereas the distinction between plates/shells and rods is the codimension $k$. Indeed, an approach based on a reference metric rather than a reference configuration makes the distinction between plates and shells almost unnoticed. Moreover, under this viewpoint, there is nothing special about non-Euclidean plates, shells, or rods, except for the fact that the limiting energy functional $\Elim$ may not assume a state of zero energy. The remarkable fact is that the formulation of the problem within the framework of Riemannian geometry leads to a limiting model that has the exact same form in all instances.

While our result is very general in that it covers a variety of limits of elastic problems that were previously treated separately, we have deliberately restricted our attention to a specific energy density, which can be considered as a generalization of Hooke's law for isotropic materials, and to slender bodies with a symmetric cross section.  An extension of the present analysis to remove these restrictions seems to be  straightforward.

\section{Problem statement and main result}\label{sec:mr}

Let $\calM$ be an $n$-dimensional smooth oriented manifold; let $\S \subset \calM$ be an $m$-dimensional smooth oriented submanifold; let $k$ denote the codimension of $\S$ so that $m +k = n$. We endow $\calM$ with a smooth Riemannian metric $\go$. The submanifold $(\S,\go|_\S)$ is bounded, either closed, or having a Lipschitz continuous boundary. In either case, the Riemann curvature tensor is uniformly bounded in $\calM$. In the context of incompatible elasticity, the manifold $(\calM,\go)$ models an elastic body with  internal distances prescribed by the reference metric   $\go$.

Let $h\in(0,h_0)$ be a continuous parameter, and let
\[
\W_h = \{p\in\calM:\,\, \dist(p,\S)<h \} \subset\calM
\]
be a family of tubular neighborhoods of $\S$ that inherit the reference metric $\go$. The boundedness of the curvature implies that for small enough $h$, the exponential map,
\[
\exp: \{(p,\xi) \in \NS:\,\, |\xi|\le h\}  \to\W_h
\]
is a diffeomorphism, where $\NS = T\calM|_\S^\perp$  is the orthogonal complement of $T\calM|_\S^\parallel\cong T\S$ in $T\calM|_\S$. Thus, we identify
\[
T\calM|_\S \cong T\S \oplus \NS.
\]

As standard, we denote by $\pi:\NS \to\S$ the projection from the vector bundle
$\NS$, or its restriction to $\W_h$,
onto its base. Let $E \rightarrow S$ and $F \rightarrow \NS$ be vector bundles, and let $\Phi : \pi^* E \rightarrow F$. Let $\xi \in \NS$ and let $\eta \in E_{\pi(\xi)}$. The fiber $(\pi^* E)_\xi$ is canonically identified with the fiber $E_{\pi(\xi)}$. So, we can unambiguously apply $\Phi$ at $\xi$ to $\eta$. Denote the result by $\Phi_{\xi}(\eta)$.

We next introduce some definitions and notations.
We define the projection operators,
\[
\Ppar : T\calM|_\S \to T\S
\Textand
\Pperp : T\calM|_\S \to \NS,
\]
and the corresponding inclusions
\[
\ipar : T\S \hookrightarrow T\calM|_\S
\Textand
\iperp : \NS  \hookrightarrow T\calM|_\S.
\]
Let  $\nabla$ be the Levi-Civita connection on $T\calM|_\S$. Then, the induced connections on $T\S$ and $\NS$ are
\[
\Dpar = \Ppar\circ\nabla\circ\ipar
\Textand
\Dperp = \Pperp\circ\nabla\circ\iperp.
\]
When it does not cause confusion, we use $\nabla$ without any decorations to denote the induced connections as well.

Let $\iota : \pi^*\NS \hookrightarrow T\NS$ denote the canonical identification of the vector bundle $\NS$ with its own vertical tangent space. Specifically, for $\xi\in\NS$ and $\eta\in(\pi^*\NS)_{\xi}$, there is a canonical identification of $\eta$ with an element of $(\NS)_{\pi(\xi)}$. We then define a curve $\gamma:I\to\NS$,
\[
\gamma(t) = \xi + \eta \, t,
\]
and identify $\iota_\xi(\eta) = \dot{\gamma}(0)$.

As stated above, we identify $\W_h$ with a subset of $\NS$.
Consider now the tangent space $T\NS$. Define the isomorphism
\[
\Pi:\pi^* T\S \oplus \pi^*\NS \to T\NS,
\]
as follows. Let $\zeta$ denote the zero section of $\NS$. Define $\Pi$ to be the unique map such that
\[
\Pi|_{S} = d\zeta \oplus \iota|_S,
\]
and for each $\xi \in \NS$ we have
\[
\nabla_{\iota_\xi(\xi)} \Pi  = 0.
\]
That is, $\Pi$ is given by radial parallel transport along the fibers of $\NS$.

A notable property of $\Pi$ is that it preserves the metric, namely
\[
\go(\Pi_\xi u,\Pi_\xi v) = \go(u,v),
\qquad\forall \xi \in \NS,\quad u,v \in T_{\pi(\xi)}\calM.
\]

For every $h\in(0,h_0)$ we consider mappings $f_h\in W^{1,2}(\W_h;\R^n)$, and assign to every such mapping an energy $\Eh[f_h]$.
In (hyper-)elasticity the energy $\Eh$ is assumed to be a volume integral of a non-negative \emph{energy density} $W_h$ \cite{Tru55}. In the absence of external constraints and forces, $W_h$ only depends on the local value of the derivative of the mapping $df_h$, and only vanishes if $f_h$ is a local orientation-preserving isometry, namely, if $df_h\in\SO{n}$, where
\[
\SO{n} = \BRK{\qo:T\W_h\to\R^n:\,\, \qo^*\euc = \go,  \text{ $\qo$ is orientation-preserving}}.
\]
The mappings $f_h\in W^{1,2}(\W_h;\R^n)$ are only of interest modulo rigid transformations, hence we may assume (for the sake of a later compactness argument) that
\beq
\intW f_h\,\Volume = 0.
\label{eq:mean0}
\eeq

In the present work we consider a specific energy functional that postulates that the material is isotropic and that the energy density scales quadratically with the distance of $df_h$ from $\SO{n}$. Such an energy density can be viewed as a continuum variant of Hooke's law for linear springs (i.e., the energy density is quadratic in the local strain). Specifically, we define
\beq
\Eh[f_h] = \frac{1}{h^2} \dashintW \dist^2(df_h,\SO{n})\, \Volume,
\label{eq:Eh}
\eeq
where $\dashint$ denotes integration divided by the volume of the domain, and the additional $1/h^2$ prefactor is discussed next.

For every fixed $h\in(0,h_0)$ the energy functional \eqref{eq:Eh} defines an elastic problem: find the mapping $f_h\in W^{1,2}(\W_h;\R^n)$ that minimizes $\Eh$. It is not known a priori that such minimizers do exist, but one can always  consider a family $f_h$ of \emph{approximate minimizers}, defined by the condition
\[
\lim_{h\to0} \brk{\Eh[f_h] - \Eh^*} = 0,
\]
where
\[
\Eh^* =  \inf\BRK{\Eh[f]:\,\, f\in W^{1,2}(\W_h;\R^n)}.
\]

As $h\to0$ the family of $n$-dimensional domains $\W_h$ shrinks to the $m$-dimensional submanifold $\S$, and hence the volume integral of the energy density in the (exact or approximate) equilibrium configuration is expected to tend to zero. Since we are interested  in the $h\to0$ limit of this family of elastic problems, we have first rescaled the energy by dividing it by the volume of the domain. Furthermore, motivated by the physical setting in which $\S$ is a either a one- or two-dimensional submanifold of a three-dimensional manifold, one can expect $\W_h$ to be ``almost" $W^{1,2}$-isometrically immersible in $\R^n$, in the sense that even the energy per unit volume tends to zero as $h\to0$. This amounts to the  submanifold $(\S;\go|_\S)$ being isometrically immersible in $\R^n$ more regularly than $W^{1,2}$. As will be shown, if $(\S;\go|_\S)$ is $W^{2,2}$-isometrically immersible into $\R^n$, then the energy per unit volume is $O(h^2)$ as $h\to0$, which is why we divided the energy per unit volume in \eqref{eq:Eh} by the additional $1/h^2$ factor.

Our assumptions about the immersibility of $(\S,\go|_\S)$ in $\R^n$ are encapsulated in the so-called \emph{finite bending assumption}:

\begin{quote}
There exists a sequence of mappings $f_h\in W^{1,2}(\W_h;\R^n)$ such that
\beq
\Eh[f_h] = O(1).
\label{eq:fba}
\eeq
\end{quote}


We now state our main results:
Let
\[
\scrX = \BRK{(F,\qperp):\,\, F\in W^{2,2}(\S;\R^n), \qperp\in W^{1,2}(\S;\NS^*\otimes\R^n)}.
\]
We say that a sequence of maps $f_h : \Omega_h \rightarrow \R^n$ \emph{\rr-converges}
to an element $(F,\qperp) \in \scrX$ if
\beq
\limH\,\, \dashintW |f_h  - F\circ\pi|^2\,\Volume = 0,
\label{eq:we_show_1}
\eeq
and
\beq
\limH\,\, \dashintW |df_h\circ\Pi - \pi^*(dF\oplus\qperp)|^2\,\Volume = 0.
\label{eq:we_show_2}
\eeq
For the physically-oriented reader, condition \eqref{eq:we_show_1} states that the conformations $f_h$ of the shrinking domains $\W_h$ converge in the mean-square to a conformation $F$ of the submanifold $\S$. Condition \eqref{eq:we_show_2} states that the tangential component of $df_h$ consistently converges to the derivative of $F$, whereas the normal component of $df_h$ converges to a limit $\qperp$.

Here and throughout this paper, we denote by $\pi^* dF$ the section of $\pi^* T^*\S \otimes \R^n$ obtained by pulling-back $dF$ considered as a section of $T^*\S \otimes \R^n$. This should not be confused with the closely related pull-back of $dF$ considered as a $1$-form on $\S$ involving composition with $d\pi,$ which we denote by
\[
\pi^\star dF = \pi^*dF \circ d\pi = d\pi^* \circ \pi^*dF.
\]

The first step in our analysis is to show that any family of (possibly approximate) minimizers $f_h$ of $\Eh$ \rr-converges, modulo subsequences, to an element $(F,\qperp)$ of the space $\scrX$.

We introduce a functional $\Elim:\scrX\to\R\cup\{\infty\}$, defined as follows. For $(F,\qperp) \in \scrX$, define $\qo \in W^{1,2}(\S,T^*\calM|_S\otimes \R^n)$ by
\[
\qo = dF \oplus \qperp,
\]
and let $\Pparq\in\Gamma(\S;\R^n\otimes T\S)$ and $\Pperpq\in\Gamma(\S;\R^n\otimes\NS)$ be defined by
\[
\Pparq = \Ppar \circ \qo^{-1}
\Textand
\Pperpq = \Pperp \circ\qo^{-1}.
\]
Define
\beq
\Elim[F,\qperp] = \Cases{
\frac{\kappa}{2}\dashint_\S
\brk{2 |\Pparq \circ \nabla \qperp  - \SFF)|^2 + |\Pperpq\circ\nabla\qperp|^2} \, \VolumeS  & \qo \in\SO{n} \,\,\text{a.e.} \\
\infty &\text{otherwise},}
\label{eq:Elim}
\eeq
where $\go|_\S$ is the induced metric on $\S$ and $\kappa$ is the volume of the $k-1$ dimensional unit sphere divided by $k+2$.

We prove the following:
\begin{enumerate}
\item
The \emph{lower-semicontinuity property},
\[
\liminf_{h\to0} \Eh[f_h] \ge \Elim[F,\qperp].
\]

\item
For every $(\Phi,\po)\in\scrX$ there exists a family of mappings $\phi_h\in W^{1,2}(\W_h;\R^n)$ (a so-called \emph{recovery sequence}), such that $\phi_h$ \rr-converges to $(\Phi,\po)$ and
\[
\lim_{h\to0} \Eh[\phi_h] = \Elim[\Phi,\po].
\]
\end{enumerate}

It is easy then to show (see e.g., dal Maso \cite{Dal93} on $\Gamma$-convergence) that the (possibly partial) limit $(F,\qperp)$ of the sequence $f_h$ of (possibly approximate) minimizers is a (true!) minimizer of the limiting functional $\Elim$, and moreover that
\[
\Elim[F,\qperp] = \lim_{h\to0} \Eh[f_h].
\]

The practical implication of this result is the following: whenever faced with the need to find a minimizer $f_h:\W_h\to\R^n$ of $\Eh$ for sufficiently small  $h$, one can rather look for a minimizer $(F,\qperp)$ of $\Elim$, which approximates $f_h$ in the sense of \eqref{eq:we_show_1} and \eqref{eq:we_show_2}. In most cases, the latter task turns out to be easier.


\section{Preliminaries}
In this section, we collect a number of definitions, facts, and basic lemmas used throughout the paper.
\subsection{Integration}
Let $M$ be a manifold with boundary and let $N$ be a manifold without boundary.
Let $f : M \rightarrow N$ be smooth. Let $E \rightarrow N$ be a vector bundle. Denote by $A^l(N,E)$ differential forms of degree $l$ on $N$ with coefficients in $E$. Denote by $f^\star : A^*(N,E) \rightarrow A^*(M,f^*E)$ the pull-back of differential forms by $f$.
If $f$ and $f|_{\partial M}$ are proper submersions of relative dimensions $k$ and $k-1$ respectively, denote by $f_\star : A^*(M,f^*E) \rightarrow A^*(N,E)[-k]$ the push-forward operator or integration over the fiber of $f$. See Bott and Tu \cite{BT82} for a discussion of integration over the fiber in the case when $M$ is the total space of a vector bundle,
which is what we will use. The push-forward operator $f_\star$ satisfies the following properties:
\begin{enumerate}
\item\label{it:pt}
Let $N$ be the point so that $E, f^*E$, are trivial bundles. If $\alpha \in A^l(M,f^*E)$, then
\[
f_\star \alpha =
\begin{cases}
\int_M \alpha & l = \dim M, \\
0 & \text{otherwise.}
\end{cases}
\]
\item\label{it:m}
If $\beta \in A^*(M,f^*E)$ and $\alpha \in A^*(N,E)$, then
\beq
f_\star(f^\star\alpha\wedge\beta) = \alpha\wedge f_\star \beta.
\label{eq:push1}
\eeq
This is a generalization of the linearity of integration to the fibered context.
\item\label{it:fp}
Let
\[
\xymatrix{
P \ar[r]^g \ar[d]^h & M \ar[d]^f \\
Q \ar[r]^k & N
}
\]
be a commutative diagram of smooth maps, where $f$ is a proper submersion, $P$ is the fiber product $M \times_N Q$, and $g,h$ are the canonical projections. Then, $h$ is a proper submersion, and if $\alpha \in A^*(M,f^*E)$, then
\[
h_\star g^\star \alpha = k^\star f_\star \alpha.
\]
This is a generalization of the classical change of variables formula.
\newcounter{saveenumi}
\setcounter{saveenumi}{\value{enumi}}
\end{enumerate}
It is easy to see that properties~\eqref{it:pt}--\eqref{it:fp} uniquely characterize $f_\star$.
Moreover,
\begin{enumerate}
\setcounter{enumi}{\value{saveenumi}}
\item \label{it:comp}
Let
\[
P \stackrel{g}{\longrightarrow} M \stackrel{f}{\longrightarrow} N,
\]
where $g$ and $f$ are proper submersions. Then
\beq
f_\star \circ g_\star = (f\circ g)_\star.
\label{eq:push2}
\eeq
This is a generalization of Fubini's theorem.
\item
Let $\nabla$ denote a connection on $E$ as well as the associated pull-back connection on $f^*E$. Let $\omega \in A^l(M,f^*E)$. The following generalization of Stokes theorem holds:
\[
\nabla (f_\star \omega) = f_\star(\nabla\omega) + (-1)^{l + k} (f|_{\partial M})_\star(\omega).
\]
\end{enumerate}

The following special cases will be particularly useful. It follows from \eqref{eq:push1} that
if $\alpha = F$ is a zero-form on $\S$ and $\beta = \Volume$, then
\beq
\pi_\star(F\circ\pi\,\,\Volume) = F\,\pi_\star\Volume.
\label{eq:push1b}
\eeq

It follows from \eqref{eq:push2} and item \ref{it:pt} above that if
\[
\W_h \stackrel{\pi}{\longrightarrow} \S \stackrel{\psi}{\longrightarrow} \text{point},
\]
then for every differential form $\beta$ on $\NS$,
\beq
\int_\S \pi_\star (\beta) = \int_{\W_h} \beta,
\label{eq:push2b}
\eeq

\subsection{The tangent bundle of a tubular neighborhood}
In Section~\ref{sec:mr}, we defined an isomorphism $\Pi : \pi^* T\S \oplus \pi^* \NS  \rightarrow T\NS$ using the connection on $\calM$ and the identification of an open subset of $\NS$ with a tubular neighborhood of $\S$ in $\calM$ by the exponential map. In the present section, we will construct another isomorphism
\[
\sigma \oplus \iota : \pi^* T\S \oplus \pi^* \NS \rightarrow T\NS
\]
using only the induced connection on $\NS$. Lemma~\ref{lm:siPi} below estimates the discrepancy between $\sigma \oplus \iota$ and $\Pi$. Thus, we may take advantage of the linearity of $\sigma\oplus \iota$ to expedite calculations.

Let $\iota : \pi^*\NS \hookrightarrow T\NS$ denote the canonical identification of the vector bundle $\NS$ with its own vertical tangent space, as explained in Section~\ref{sec:mr}.
The differential $d\pi : T\NS\to \pi^*T\S$ is also defined canonically.   Clearly, \beq\label{eq:dpiio}
d\pi\circ\iota=0,
\eeq
which implies that
\[
\pi^*\NS \stackrel{\iota}{\hookrightarrow} T\NS \stackrel{d\pi}{\longrightarrow}  \pi^*T\S
\]
is a short exact sequence.

To fully determine an isomorphism $T\NS \cong \pi^*T\S \oplus \pi^*\NS$ we need a map
\[
\sigma : \pi^*T\S \to T\NS,
\]
such that
\beq\label{eq:dpisi}
d\pi\circ\sigma = \id.
\eeq
Here, we use the induced connection on $\NS$. Define $\sigma$ to be the unique map such that for any curve $\alpha : I \to S$ and any parallel normal field $\xi:I\to\NS$ along $\alpha$, we have
\[
\sigma_\xi(\dot \alpha) = \dot \xi.
\]
Thus, we have constructed an isomorphism,
\[
\sigma\oplus\iota : \pi^* T\S \oplus \pi^* \NS \to T\NS.
\]

Let $\lambda$ denote the tautological section of $\pi^* \NS$. That is, for $\xi \in \NS$,
\[
\lambda_\xi = \xi \in (\pi^*\NS)_\xi.
\]
Let $\SFF : \NS \otimes T\S \rightarrow T\S$ denote the second fundamental form.
Then:

\begin{lemma}\label{lm:siPi}
We have
\[
\sigma \oplus \iota - \Pi = \sigma \circ \pi^* \SFF \circ (\lambda \otimes \pi^*\Ppar) + O(\lambda^{\otimes 2}).
\]
\end{lemma}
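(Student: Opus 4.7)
The plan is to expand both $\sigma\oplus\iota$ and $\Pi$ in the fiber variable $\lambda$ and compare at first order. By construction $\Pi|_\S=d\zeta\oplus\iota|_\S=(\sigma\oplus\iota)|_\S$, so their difference vanishes on the zero section and is $O(\lambda)$; the task is to identify the linear term and to verify that only the $\pi^*T\S$-summand of the domain contributes to it.

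I would first dispose of the $\pi^*\NS$-summand. Writing $\Pi_\xi=(d\exp_\xi)^{-1}\circ P_1^\gamma$ with $\gamma(t)=\exp(t\xi)$ the radial geodesic, the Jacobi field along $\gamma$ with $J(0)=0$, $J'(0)=n$ satisfies $J''(0)=-R(\xi,0)\xi=0$, so $J(1)=P_1^\gamma(n)+O(|\xi|^2)$ and hence $\Pi_\xi(n)=\iota_\xi(n)+O(|\xi|^2)$. Since $(\sigma\oplus\iota)_\xi(n)=\iota_\xi(n)$ by definition, the $\pi^*\NS$-discrepancy is $O(\lambda^{\otimes 2})$, consistent with $\pi^*\Ppar$ annihilating $\pi^*\NS$ on the right-hand side of the claim.

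For the $\pi^*T\S$-summand, fix $p\in\S$ and choose orthonormal frames $\{e_a\}$ of $T\S$ and $\{n_\alpha\}$ of $\NS$ near $p$ with $\Dperp n_\alpha|_p=0$, then Fermi-extend them along normal geodesics to an orthonormal frame of $T\calM$ near $p$. In the induced coordinates $(x^a,y^\alpha)$ on $\NS$ the normal connection form vanishes at $p$, so $\sigma_\xi(e_a)=\partial/\partial x^a|_\xi$ for $\xi\in\NS|_p$, and the Fermi extension gives $P_1^\gamma(e_a|_p)=e_a|_{\exp(\xi)}$. The Jacobi field along $\gamma$ with $J(0)=e_a$ and $J'(0)=\xi^\alpha\nabla_{e_a}n_\alpha|_p=\SFF_\xi(e_a)$ (where $\Dperp n_\alpha|_p=0$ identifies the tangential part of $\nabla_{e_a}n_\alpha$ with the second fundamental form) computes
\[
(d\exp_\xi)(\partial/\partial x^a|_\xi)=J(1)=e_a|_{\exp(\xi)}+\SFF_\xi(e_a)|_{\exp(\xi)}+O(|\xi|^2),
\]
because $J''(0)=-R(\xi,e_a)\xi=O(|\xi|^2)$. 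Solving for $e_a|_{\exp(\xi)}$ and iterating once to invert $d\exp_\xi$ (using $\SFF_\xi(e_a)=O(|\xi|)$) yields $\Pi_\xi(e_a)=\sigma_\xi(e_a)-\sigma_\xi\bigl(\SFF_\xi(e_a)\bigr)+O(|\xi|^2)$, equivalently $(\sigma\oplus\iota-\Pi)_\xi(e_a)=\sigma_\xi(\SFF_\xi(e_a))+O(|\xi|^2)$, which is exactly $\bigl(\sigma\circ\pi^*\SFF\circ(\lambda\otimes\pi^*\Ppar)\bigr)_\xi(e_a)$.

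Linearity extends the identity to all of $(\pi^*T\S\oplus\pi^*\NS)|_p$, and since both sides of the claim are intrinsic tensorial expressions, the formula holds globally on $\NS$. The main obstacle is bookkeeping rather than ideas: one must track the Jacobi-field Taylor expansion through the iterative inversion of $d\exp_\xi$, verify that the $O(|\xi|^2)$ error terms really do feed cleanly into $O(\lambda^{\otimes 2})$ in a basis-independent sense, and reconcile the sign in the identification $\xi^\alpha\nabla_{e_a}n_\alpha|_p=\SFF_\xi(e_a)$ with the sign convention for $\SFF$ adopted in the paper.
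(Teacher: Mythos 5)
Your proof is correct, and it reaches the same two subclaims the paper proves (the $\pi^*\NS$-summand of $\sigma\oplus\iota-\Pi$ is $O(\lambda^{\otimes 2})$; the $\pi^*T\S$-summand equals $\sigma\circ\pi^*\SFF\circ(\lambda\otimes\pi^*\Ppar)$ to first order), but by a genuinely different route. The paper never invokes Jacobi fields or Fermi coordinates: it works intrinsically on $\NS$, treating $\sigma,\iota,\Pi$ as bundle maps and computing their radial covariant derivatives $\frac{D}{ds}(\cdot)_{s\xi}|_{s=0}$ directly via symmetry of the connection, polarization, and the fact that $\Pi$ is flat along fibers by definition; it then ``integrates radially'' to get the $O(\lambda^{\otimes 2})$ remainder. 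Your version instead passes through $\calM$ via $d\exp$, parametrizes the discrepancies by Jacobi fields $J$ along radial geodesics, and reads off the first-order term from $J'(0)$; the $O(|\xi|^2)$ remainder comes from $J''(0)=-R(J(0),\xi)\xi$ being either zero (normal summand, where $J(0)=0$) or $O(|\xi|^2)$ (tangential summand, where $|\xi|$ appears twice in the curvature term). This is more computational --- you need the Fermi frame to make $\sigma_\xi(e_a)=\partial/\partial x^a|_\xi$ and $P_1^\gamma(e_a)=e_a|_{\exp\xi}$, and you need to invert $d\exp_\xi=\id+O(|\xi|)$ iteratively --- but it buys concreteness and familiarity: the whole argument reduces to standard Taylor expansion of Jacobi fields in a parallel frame. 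The paper's approach buys brevity and avoids the $d\exp$ inversion entirely by comparing radial derivatives at the zero section, which is exactly where the definitions of $\sigma$, $\iota$, $\Pi$, and $\SFF$ live. Two small remarks on your flagged obstacles: the passage from pointwise $O(|\xi|^2)$ to $O(\lambda^{\otimes 2})$ is immediate since $\lambda_\xi=\xi$, provided the curvature and $\SFF$ bounds are uniform over $\S$ (which the paper's standing assumptions supply); and your sign identification $\xi^\alpha\nabla_{e_a}n_\alpha|_p=\SFF_\xi(e_a)$ is exactly the paper's convention, visible in its proof where $\frac{D}{dt}\iperp(\beta)|_{t=0}=\ipar\circ\SFF(\xi,\eta)$ for $\beta$ a parallel normal field, so there is no sign to reconcile.
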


\begin{proof}
We start by writing
\[
\sigma \oplus \iota - \Pi  = (\sigma - \Pi\circ\pi^*\ipar)\oplus(\iota - \Pi\circ\pi^*\iperp).
\]
First, we show that for $\xi\in\NS$
\beq \label{eq:ioh2}
|(\iota - \Pi \circ \pi^* \iperp)_\xi | = O(|\xi |^2).
\eeq
Indeed, let $p \in \S$ and $\zeta,\eta \in \NS_p$ be arbitrary. Consider the sections $\iota(\zeta),\iota(\eta)$ of $\pi^*\NS|_{\pi^{-1}(p)}$. Thinking of $p$ as a point in $\NS$, we claim that
\beq\label{eq:ii0}
\nabla_{\iota(\zeta)} \iota(\eta)|_p = 0.
\eeq
In fact, by symmetry of the connection,
\begin{align}
\nabla_{\iota(\zeta)} \iota(\eta)|_p &= \left.\frac{D}{dt}\left(\left.\frac{d}{ds} (t\zeta + s\eta) \right|_{s = 0} \right)\right|_{t = 0} \notag\\
&= \left.\frac{D}{ds}\left(\left.\frac{d}{dt} (t\zeta + s\eta) \right|_{t = 0} \right)\right|_{s = 0} \notag\\
&= \nabla_{\iota(\eta)} \iota(\zeta)|_p. \label{eq:iis}
\end{align}
Since the identification between $\Omega_h$ and a neighborhood of $\S \subset \NS$ is via the exponential map, we have
\beq\label{eq:iziz}
\nabla_{\iota(\zeta)} \iota(\zeta)|_p = 0
\eeq
for arbitrary $\zeta$. Equation~\eqref{eq:ii0} follows by the polarization identity from equations~\eqref{eq:iis} and~\eqref{eq:iziz}.

An immediate consequence of~\eqref{eq:ii0} is that
\[
|(\nabla_{\iota(\zeta)} \iota(\eta))_{s\xi}| = O(|s\xi|).
\]
So, by definition of $\Pi$,
\[
|\nabla_{\iota(\xi)} (\iota - \Pi\circ \iperp)_{s\xi} | = O(|s\xi|).
\]
Since
\[
\Pi \circ \iperp|_S = \iota|_S,
\]
equation~\eqref{eq:ioh2} follows by integrating radially.

It remains to show that
\beq \label{eq:sih2}
\sigma - \Pi \circ \pi^* \ipar  = \sigma \circ \pi^* \SFF \circ (\lambda \otimes \pi^*\Ppar) + O(\lambda^{\otimes 2}).
\eeq
Let $p \in \S$, let $\eta \in T_p \S$ and $\xi \in \NS_p$. Let $\alpha : I \rightarrow S$ with $\dot\alpha(0) = \eta$. Let $\beta$ be a parallel normal field along $\alpha$ with $\beta(0) = \xi$. For $s \in \R$, $s \beta$ is also a parallel normal field along $\alpha$. So, by definition of $\sigma$,
\[
\sigma_{s\xi}(\eta) = \left.\frac{d}{dt} s\beta \right|_{t = 0}.
\]
Applying the covariant derivative $\nabla$ to both sides, we calculate how $\sigma$ varies radially along $\xi:$
\begin{align}
\frac{D}{ds} \sigma_{s\xi}(\eta)|_{s = 0} &= \left. \frac{D}{ds}\left( \left.\frac{d}{dt} (s\beta) \right|_{t = 0}\right)\right|_{s=0}\label{eq:dds1}\\
& = \left.\frac{D}{dt} \left(\left.\frac{d}{ds} (s\beta)\right|_{s = 0} \right)\right|_{t = 0}\notag\\
& = \left.\frac{D}{dt} \iperp(\beta)\right|_{t = 0}\notag \\
& = \ipar \circ \SFF(\xi,\eta),\notag
\end{align}
where the last equality follows from the definition of the second fundamental form. On the other hand, by Leibniz's product rule,
\beq
\left. \frac{D}{ds} \sigma_{s\xi} \left( \SFF(s\xi,\eta)\right)\right|_{s= 0} = \ipar \circ \SFF(\xi,\eta). \label{eq:dds}
\eeq
Combining equations~\eqref{eq:dds1} and~\eqref{eq:dds} we obtain
\[
\frac{D}{ds} \sigma_{s\xi}(\eta)|_{s = 0} = \left. \frac{D}{ds} \sigma_{s\xi} \left( \SFF(s\xi,\eta)\right)\right|_{s= 0}.
\]
So, considering $p$ as a point of $\NS$, by definition of $\Pi,$
\[
\nabla_{\iota_p(\xi)} \left(\sigma - \Pi\circ \pi^*\ipar - \sigma \circ \pi^* \SFF \circ (\lambda \otimes \pi^*\Ppar)\right) = 0.
\]
Since
\[
\left.\left(\sigma - \Pi\circ \pi^*\ipar\right)\right|_{\S}  =  0  = \sigma \circ \pi^* \SFF \circ (\lambda \otimes \pi^*\Ppar)|_\S,
\]
equation~\eqref{eq:sih2} follows by integrating radially.
\end{proof}

\subsection{The metric on a tubular neighborhood}
\begin{corollary}\label{cy:siapg}
\[
\go \circ (\sigma\oplus\iota)^{\otimes 2} - \pi^*\go = O(\lambda).
\]
\end{corollary}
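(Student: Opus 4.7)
The plan is to view $\sigma\oplus\iota$ as a small perturbation of $\Pi$ and exploit the isometry property of $\Pi$ already recorded in Section~\ref{sec:mr}, namely $\go(\Pi_\xi u,\Pi_\xi v)=\go(u,v)$ for all $u,v\in T_{\pi(\xi)}\calM$.

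First, I would rewrite Lemma~\ref{lm:siPi} in the compact form
\[
\sigma\oplus\iota = \Pi + R, \qquad R := \sigma\circ\pi^*\SFF\circ(\lambda\otimes\pi^*\Ppar) + O(\lambda^{\otimes 2}).
\]
Since $\sigma$, $\SFF$, and $\Ppar$ are all bounded on the compact submanifold $\S$, and $\lambda$ is the tautological normal section, this immediately gives $R = O(\lambda)$ uniformly on the tubular neighborhood.

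Next I would expand bilinearly,
\[
\go\circ(\sigma\oplus\iota)^{\otimes 2}
 = \go\circ\Pi^{\otimes 2}
 + \go\circ(\Pi\otimes R + R\otimes\Pi)
 + \go\circ R^{\otimes 2}.
\]
The first summand equals $\pi^*\go$ by the metric-preserving property of $\Pi$ recalled above. Since $\Pi$ is itself a fiberwise isometry, hence bounded, the two cross terms are $O(\lambda)$, and the final term is $O(\lambda^{\otimes 2})\subseteq O(\lambda)$. Subtracting $\pi^*\go$ from both sides yields the corollary.

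I do not expect any genuine obstacle; the argument is essentially bookkeeping once Lemma~\ref{lm:siPi} is in hand. The only point that demands a moment's care is the interpretation of the symbol $\go$ on each side: on the left it denotes the metric on $T\NS$ obtained by pulling back $\go$ along the exponential identification $\W_h\cong\NS|_{|\xi|<h}$, while on the right it denotes $\pi^*(\go|_\S)$ on $\pi^*T\calM|_\S$. The two interpretations coincide on the zero section and are propagated radially precisely by the defining condition $\nabla_{\iota_\xi(\xi)}\Pi = 0$, which is what makes $\Pi$ an isometry and lets the expansion above close.
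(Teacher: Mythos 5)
Your proof is correct and takes essentially the same route as the paper: it invokes $\go\circ\Pi^{\otimes 2}=\pi^*\go$ (the paper's phrasing is ``parallel transport preserves $\go$'') and then concludes from Lemma~\ref{lm:siPi} that replacing $\Pi$ by $\sigma\oplus\iota$ introduces only an $O(\lambda)$ error. The bilinear expansion and the remark on interpreting $\go$ on either side are just the bookkeeping the paper compresses into the words ``the corollary follows from Lemma~\ref{lm:siPi}.''
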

\begin{proof}
Since parallel transport preserves $\go$, we have
\[
\go \circ\Pi^{\otimes 2} = \pi^*\go.
\]
So, the corollary follows from Lemma~\ref{lm:siPi}.
\end{proof}

Let $\tilde \go$ denote the unique metric on the total space of $\NS$ such that
$\sigma \oplus \iota$ is an isometry. It's easy to see that $\tilde\go|_\S = \go|_\S$. The following Corollary follows immediately from the previous.
\begin{corollary}\label{cy:tgg}
\[
\tilde \go - \go = O(\lambda).
\]
\end{corollary}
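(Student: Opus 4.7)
The plan is to derive Corollary~\ref{cy:tgg} as a direct transfer of the estimate in Corollary~\ref{cy:siapg} from the bundle $\pi^*T\S \oplus \pi^*\NS$ to $T\NS$ via the isomorphism $\sigma\oplus\iota$.

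First I would unpack the definitions. By definition of $\tilde\go$, the map $\sigma\oplus\iota$ is an isometry from $(\pi^*T\S \oplus \pi^*\NS, \pi^*\go)$ to $(T\NS, \tilde\go)$, which is the same as saying
\[
\tilde\go \circ (\sigma\oplus\iota)^{\otimes 2} = \pi^*\go.
\]
On the other hand, Corollary~\ref{cy:siapg} states
\[
\go \circ (\sigma\oplus\iota)^{\otimes 2} - \pi^*\go = O(\lambda).
\]
Subtracting, I obtain the identity
\[
(\tilde\go - \go) \circ (\sigma\oplus\iota)^{\otimes 2} = - \bigl(\go \circ (\sigma\oplus\iota)^{\otimes 2} - \pi^*\go\bigr) = O(\lambda)
\]
as a section of $\pi^*T^*\S \oplus \pi^*\NS^*)^{\otimes 2}$.

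Next I would transfer the estimate back to $T\NS$. Since $\sigma\oplus\iota$ is a vector bundle isomorphism whose inverse has bounded norm on the relevant neighborhood (measured with respect to either metric, since they agree on $\S$), precomposing with $(\sigma\oplus\iota)^{-1}$ preserves the $O(\lambda)$ bound. This yields
\[
\tilde\go - \go = O(\lambda)
\]
as a section of $(T^*\NS)^{\otimes 2}$, which is the desired conclusion. There is no substantial obstacle here; the only minor care needed is to note that the norm used to define $O(\lambda)$ is consistent between the two bundles, which is immediate because $\sigma\oplus\iota$ restricts to the identity (in the appropriate sense) along $\S$ and has uniformly bounded operator norm on a sufficiently small tubular neighborhood.
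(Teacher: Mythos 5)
Your proof is correct and fills in exactly the reasoning the paper leaves implicit (the paper simply says the corollary ``follows immediately from the previous''): use the defining isometry property $\tilde\go\circ(\sigma\oplus\iota)^{\otimes 2}=\pi^*\go$, subtract against Corollary~\ref{cy:siapg}, and transport the bound back via the uniformly bounded inverse of $\sigma\oplus\iota$. No gap; this is the same approach as the paper.
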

In the rest of this paper we will write $\go$ instead of $\pi^*\go$ when it does not cause confusion.

\subsection{The volume form on a tubular neighborhood}
Let $E,F \rightarrow M$, be vector bundles and let $h: E \rightarrow F$ be a morphism of vector bundles. Denote by $\Lambda^a h : \Lambda^a E \rightarrow \Lambda^a F$ the associated vector bundle morphism between the $a^{th}$ exterior powers of $E$ and $F$.

Taking the determinant of $\Pi$, we have an isomorphism
\[
(\Lambda^n \Pi)^{-1*} : \Lambda^m \pi^* T^*\S \otimes \Lambda^k \pi^* \NS^* \rightarrow \Lambda^n T^*\NS.
\]

Write
\begin{align*}
\rho  &= (\sigma \oplus \iota)^{-1*} \circ (\pi^*\Ppar)^* : \pi^* T^*\S \rightarrow T^*\NS, \\
\theta  &= (\sigma \oplus \iota)^{-1*} \circ (\pi^*\Pperp)^* : \pi^* \NS^* \rightarrow T^* \NS.
\end{align*}
Note that
\begin{equation}\label{eq:tris}
\sigma^* \circ \rho = \id, \qquad \iota^* \circ \theta = \id, \qquad \sigma^* \circ \theta = 0, \qquad \iota^* \circ \rho = 0.
\end{equation}
Moreover, equations~\eqref{eq:tris} uniquely characterize $\rho$ and $\theta$.
Taking exterior powers, we have
\[
\Lambda^i \rho : \Lambda^i \pi^* T^*\S \rightarrow \Lambda^i T^*\NS, \qquad \Lambda^j \theta : \Lambda^j \pi^* \NS^* \rightarrow \Lambda^j T^* \NS.
\]
Moreover,
\begin{equation}\label{eq:wedge}
\bigoplus_{i+j = l} \Lambda^i \rho \wedge \Lambda^j\theta  = \Lambda^l (\sigma \oplus \iota)^{-1*} : \bigoplus_{i+j=l}\Lambda^i \pi^* T^*\S \otimes \Lambda^j \pi^* \NS^* \simeq \Lambda^l \pi^*(T^*\S \oplus \NS^*) \rightarrow \Lambda^l T^*\NS.
\end{equation}

Let $\tilde \eta$ be the unit norm section of $\Lambda^m T^*\S$ belonging to the orientation class, i.e. $\tilde \eta = \VolumeS$. Let $\tilde \omega$ be the unit norm section of $\Lambda^k \NS^*$ belonging to the orientation class determined by the orientations of $\calM$ and $\S$. Define
\[
\eta = \Lambda^m \rho \circ \pi^*\tilde \eta, \qquad \omega = \Lambda^k \theta \circ \pi^*\tilde \omega.
\]
In particular, $\eta \in A^m(\NS)$ and $\omega \in A^k(\NS)$. It is immediate from the definition that
\beq\label{eq:vtot}
  \eta \wedge\omega= \Volumet.
\eeq

\begin{corollary}\label{cy:h2}
\[
\eta \wedge \omega - \Volume = O(\lambda).
\]
\end{corollary}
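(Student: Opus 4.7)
The plan is to reduce the claim to Corollary \ref{cy:tgg} via equation \eqref{eq:vtot}. Indeed, \eqref{eq:vtot} states $\eta \wedge \omega = \Volumet$, so the corollary is equivalent to the assertion $\Volumet - \Volume = O(\lambda)$. This turns a statement about wedge products of the particular forms $\eta,\omega$ into a statement about how the volume form depends on the metric --- which is where I want to apply Corollary \ref{cy:tgg}.

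Next I would invoke the standard fact that the volume form of a Riemannian metric is a smooth, nondegenerate function of the metric tensor. Working in a local frame $\{e_1,\dots,e_n\}$ of $T\NS$ with dual coframe $\{e^1,\dots,e^n\}$, and writing $g_{ij} = \go(e_i,e_j)$, $\tilde g_{ij} = \tilde\go(e_i,e_j)$, one has
\[
\Volume = \sqrt{\det(g_{ij})}\,\, e^1\wedge\cdots\wedge e^n, \qquad \Volumet = \sqrt{\det(\tilde g_{ij})}\,\, e^1\wedge\cdots\wedge e^n.
\]
Since $A \mapsto \sqrt{\det A}$ is smooth on the open set of positive-definite symmetric matrices, Taylor expansion (or the mean value theorem) gives $\sqrt{\det \tilde g} - \sqrt{\det g} = O(|\tilde g - g|)$, with the implicit constant controlled by a neighborhood of $\S$ on which $\det(g_{ij})$ is bounded away from zero. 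Combined with Corollary \ref{cy:tgg}, which gives $\tilde\go - \go = O(\lambda)$, this yields $\Volumet - \Volume = O(\lambda)$.

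The main obstacle I anticipate is mostly bookkeeping: ensuring that the implicit constant in the $O(\lambda)$ estimate is uniform across all of $\S$. This follows from boundedness of $\S$ together with smoothness of $\go$ and $\tilde \go$ and the fact that $\tilde \go|_\S = \go|_\S$ (noted in the statement of Corollary \ref{cy:tgg}), which keeps the two metrics uniformly positive-definite on a neighborhood of the zero section. To avoid framing artifacts, one can choose at each point $p \in \S$ an orthonormal frame for $\go$, reducing $\det(g_{ij}) = 1$ at $p$ and making the expansion of $\sqrt{\det(\tilde g_{ij})}$ manifestly linear in $\tilde g - g$ to leading order; invariance of the final estimate under change of frame is then automatic.
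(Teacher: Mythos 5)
Your proof is correct and follows essentially the same route as the paper's: reduce via equation \eqref{eq:vtot} to comparing $\Volumet$ with $\Volume$, then use the $O(\lambda)$ estimate on the metric discrepancy (you cite Corollary~\ref{cy:tgg} while the paper cites the equivalent Corollary~\ref{cy:siapg}). You have merely spelled out the standard smooth-dependence-of-volume-on-metric argument that the paper treats as immediate.
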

\begin{proof}
The Corollary is an immediate consequence of equation~\eqref{eq:vtot} and Corollary~\ref{cy:siapg}.
\end{proof}

\begin{lemma}
\label{lm:pisrho}
Let $\alpha \in A^l(\S)$. Then
\[
\pi^\star \alpha = \Lambda^l\rho \circ \pi^* \alpha.
\]
\end{lemma}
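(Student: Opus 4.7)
The plan is to recognize that this statement is a purely formal identity of bundle maps with no analytic content: it reduces to the observation that $\rho$ is the transpose of $d\pi$. Once we verify $\rho^* = d\pi$ as bundle maps $T\NS \to \pi^*T\S$, the lemma is immediate by taking $l$-th exterior powers.

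First, I would dualize the defining formula $\rho = (\sigma\oplus\iota)^{-1*} \circ (\pi^*\Ppar)^*$ to obtain
\[
\rho^* = (\pi^*\Ppar) \circ (\sigma\oplus\iota)^{-1} : T\NS \to \pi^*T\S.
\]
Fix $\xi \in \NS$ and $v \in T_\xi\NS$, and use the isomorphism $\sigma\oplus\iota$ to write uniquely $v = \sigma_\xi(a) + \iota_\xi(b)$ with $(a,b) \in (\pi^*T\S)_\xi \oplus (\pi^*\NS)_\xi$. Under the identification $T\calM|_\S \cong T\S \oplus \NS$, the projection $\Ppar$ is just projection onto the first summand, so $(\pi^*\Ppar)(a,b) = a$, and therefore $\rho^*(v) = a$. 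On the other hand, applying $d\pi$ to $v$ and invoking the characterizing identities $d\pi \circ \sigma = \id$ from \eqref{eq:dpisi} and $d\pi \circ \iota = 0$ from \eqref{eq:dpiio}, one finds $d\pi(v) = a$ as well. Hence $\rho^* = d\pi$.

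Finally, since $\Lambda^l$ is functorial with respect to transposition, $(\Lambda^l\rho)^* = \Lambda^l d\pi$, so for any $v_1,\ldots,v_l \in T_\xi\NS$,
\[
(\Lambda^l\rho \circ \pi^*\alpha)_\xi(v_1,\ldots,v_l) = (\pi^*\alpha)_\xi(d\pi(v_1),\ldots,d\pi(v_l)) = \alpha_{\pi(\xi)}(d\pi(v_1),\ldots,d\pi(v_l)),
\]
which is exactly $(\pi^\star\alpha)_\xi(v_1,\ldots,v_l)$ by the definition of $\pi^\star$ given in Section~\ref{sec:mr}. There is essentially no analytical obstacle: the only place to proceed carefully is in keeping track of the canonical identifications, particularly recognizing $\pi^*\Ppar$ as the projection onto $\pi^*T\S$ in the direct-sum decomposition $\pi^*T\calM|_\S \cong \pi^*T\S \oplus \pi^*\NS$ inherited from $T\calM|_\S \cong T\S \oplus \NS$.
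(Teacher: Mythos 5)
Your proof is correct and follows the same route as the paper: the paper's proof consists exactly of observing that equations~\eqref{eq:dpiio} and~\eqref{eq:dpisi} give $\rho = d\pi^*$, from which the lemma follows by the definition of $\pi^\star$. You have simply unpacked that one-line argument, verifying $\rho^* = d\pi$ by decomposing $v = \sigma_\xi(a) + \iota_\xi(b)$ and then passing to exterior powers.
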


\begin{proof}
By equations~\eqref{eq:dpiio} and~\eqref{eq:dpisi}, we have
\[
\rho = d\pi^*.
\]
So, the lemma follows from the definition of $\pi^\star$.
\end{proof}

\begin{lemma}
\label{lm:etvo}
\begin{equation*}
\eta  = \pi^\star \VolumeS.
\end{equation*}
\end{lemma}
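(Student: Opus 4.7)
The plan is to apply Lemma~\ref{lm:pisrho} directly with $\alpha = \VolumeS \in A^m(\S)$. By definition, $\eta = \Lambda^m \rho \circ \pi^*\tilde\eta$ and $\tilde\eta$ was chosen to be precisely $\VolumeS$, so the right-hand side of the lemma's conclusion becomes $\Lambda^m \rho \circ \pi^*\VolumeS = \eta$. Meanwhile, Lemma~\ref{lm:pisrho} identifies this with $\pi^\star \VolumeS$. So essentially the claim is a one-line invocation of the previous lemma unwrapping the definition of $\eta$.

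The only thing to check carefully is that the degree matches: $\VolumeS$ is a top-degree form on $\S$, i.e.\ an element of $A^m(\S)$, so $l = m$ in Lemma~\ref{lm:pisrho}, matching the exterior power $\Lambda^m$ appearing in the definition of $\eta$. I would therefore write the proof as:
\begin{proof}
By Lemma~\ref{lm:pisrho} applied to $\alpha = \tilde\eta = \VolumeS \in A^m(\S)$,
\[
\pi^\star \VolumeS = \Lambda^m \rho \circ \pi^* \VolumeS = \Lambda^m \rho \circ \pi^*\tilde\eta = \eta,
\]
where the last equality is the definition of $\eta$.
\end{proof}

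There is no real obstacle; the statement is essentially a definitional unpacking once Lemma~\ref{lm:pisrho} is in hand. The substantive content was already done in proving $\rho = d\pi^*$ (which is the content of Lemma~\ref{lm:pisrho} and rests on equations~\eqref{eq:dpiio} and~\eqref{eq:dpisi}); this corollary-like lemma merely records the special case needed later to identify $\eta$ with the fiber-integration-style pullback of the submanifold volume form, which will matter when computing integrals via push-forward identities such as \eqref{eq:push1b} and \eqref{eq:push2b}.
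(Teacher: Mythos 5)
Your proof is correct and is essentially identical to the paper's: both apply Lemma~\ref{lm:pisrho} with $l=m$ and then unwind the definition $\eta = \Lambda^m\rho\circ\pi^*\tilde\eta$ together with $\tilde\eta = \VolumeS$. No further comment is needed.
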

\begin{proof}
Combine Lemma~\ref{lm:pisrho} for $l = m$ with the fact that
\[
\Lambda^m\rho \circ \pi^*\VolumeS = \Lambda^m\rho \circ \pi^*\tilde \eta = \eta.
\]
\end{proof}

\begin{lemma}
\label{lm:cm}
We have
\beq
\frac{\piPush \Volume}{|\W_h|} - \frac{\VolumeS}{|\S|} = O(h), \qquad |\S|\nu_k h^k - |\W_h| = O(h^{1+k}),
\label{eq:lim_measure}
\eeq
where $\nu_k$ is the volume of the $k$ dimensional unit ball.
\end{lemma}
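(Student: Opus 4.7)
The plan is to push the volume form $\Volume$ forward along $\pi$, using Corollary~\ref{cy:h2} to approximate $\Volume$ by $\eta \wedge \omega$ and Lemma~\ref{lm:etvo} to rewrite $\eta = \pi^\star\VolumeS$. Writing
\[
\Volume = \pi^\star\VolumeS \wedge \omega + \epsilon,
\]
where $\epsilon$ is a top form on $\W_h$ with pointwise bound $O(\lambda) = O(h)$, the projection formula~\eqref{eq:push1} gives
\[
\piPush\Volume = \VolumeS \wedge \piPush\omega + \piPush\epsilon.
\]

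The crux is to identify $\piPush\omega$ pointwise. Under the identification of $\W_h$ with a subset of $\NS$ via the exponential map, the fiber $\pi^{-1}(p) \cap \W_h$ is the open Euclidean ball $B_h^k \subset \NS_p$, because normal geodesics realize the distance to $\S$ for small $h$. From the definition $\omega = \Lambda^k\theta \circ \pi^*\tilde\omega$ and the fact that $d\pi$ vanishes along the fiber (so the $\sigma$-component of $\sigma\oplus\iota$ plays no role there), $\omega$ restricts on $\pi^{-1}(p)$ to $\iota^{-1*}\tilde\omega_p$, namely the standard Euclidean volume form on the inner-product space $\NS_p$. Hence
\[
(\piPush\omega)(p) = \int_{B_h^k} \iota^{-1*}\tilde\omega_p = \nu_k h^k.
\]
The error is controlled pointwise: $\epsilon$ is $O(h)$ on a fiber of Euclidean $k$-volume $\nu_k h^k$, so $\piPush\epsilon = O(h^{k+1})\VolumeS$.

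Combining these, $\piPush\Volume = \bigl(\nu_k h^k + O(h^{k+1})\bigr)\VolumeS$. Integrating over $\S$ via~\eqref{eq:push2b} yields
\[
|\W_h| = \nu_k h^k |\S| + O(h^{k+1}),
\]
which is the second identity. The first identity then follows by forming the ratio and using $(1+O(h))^{-1} = 1 + O(h)$:
\[
\frac{\piPush\Volume}{|\W_h|} = \frac{\nu_k h^k + O(h^{k+1})}{\nu_k h^k|\S| + O(h^{k+1})}\VolumeS = \frac{\VolumeS}{|\S|}\bigl(1 + O(h)\bigr).
\]

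The main obstacle is verifying that $\omega$ restricts on each fiber to the Euclidean volume form of $\NS_p$, so that fiber integration produces exactly $\nu_k h^k$. This amounts to tracing through the definitions of $\theta$ and $\omega$ and using that $\sigma\oplus\iota$ is an isometry for the fiberwise-Euclidean metric $\tilde\go$; the remaining bookkeeping of $O(\lambda)$-bounded forms under fiber integration is routine.
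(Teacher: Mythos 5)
Your proof is correct and follows essentially the same route as the paper's: you invoke Corollary~\ref{cy:h2} and Lemma~\ref{lm:etvo} to replace $\Volume$ by $\pi^\star\VolumeS\wedge\omega$ up to an $O(\lambda)=O(h)$ error, use the projection formula to reduce to $\piPush\omega=\nu_k h^k$, and integrate over $\S$ to get the second identity before dividing for the first. The only real difference is that you spell out the verification of $\piPush\omega=\nu_k h^k$, which the paper simply asserts as a remark.
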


\begin{proof}
By Corollary~\ref{cy:h2} and Lemma~\ref{lm:etvo}, we have
\[
\pi_\star (\omega) \VolumeS - \pi_\star \Volume = \pi_\star (\pi^\star \VolumeS\wedge\omega) - \pi_\star \Volume = O(h^{1+k}).
\]
Note that $\pi_\star(\omega)$ is the constant $\nu_k h^k$.
Integrating over $\S$, we have
\[
|\S|\nu_k h^k - |\W_h| = O(h^{1+k}).
\]
The lemma follows.
\end{proof}

\begin{lemma}\label{lm:om}
We have $\omega|_{\partial \W_h} = 0.$
\end{lemma}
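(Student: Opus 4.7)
The plan is to evaluate $\omega$ explicitly on $k$-tuples of tangent vectors to $\partial \W_h$ and observe, for dimension reasons, that the answer is always zero. The key input is the defining property~\eqref{eq:tris} of $\theta$, namely that $\sigma^*\circ\theta=0$ and $\iota^*\circ\theta=\id$, which says that $\theta$ kills the horizontal ($\sigma$-) direction and reads off the vertical ($\iota$-) component.

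First, I would identify $T_\xi \partial\W_h$ for $\xi\in\NS$ with $|\xi|=h$. Using the splitting $T\NS\cong\sigma(\pi^*T\S)\oplus\iota(\pi^*\NS)$, I compute the differential of the squared-norm function $\xi\mapsto|\xi|^2$. Since the induced connection on $\NS$ is metric, parallel transport preserves the norm, so by the definition of $\sigma$ the function $|\cdot|^2$ is constant along $\sigma$-directions; and from $\iota_\xi(\eta)=\tfrac{d}{dt}(\xi+t\eta)|_{t=0}$ one gets $d|\cdot|^2(\iota_\xi(\eta))=2\go(\xi,\eta)$. Therefore
\[
T_\xi \partial \W_h \;=\; \sigma_\xi(\pi^*T_{\pi(\xi)}\S)\,\oplus\,\iota_\xi(\xi^\perp),
\]
where $\xi^\perp\subset\NS_{\pi(\xi)}$ is the $(k-1)$-dimensional subspace of vectors $\go$-orthogonal to $\xi$.

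Next, I would evaluate $\omega_\xi$ on $k$ vectors $v_i=\sigma_\xi(\zeta_i)+\iota_\xi(\eta_i)\in T_\xi\partial\W_h$ with $\eta_i\in\xi^\perp$. By~\eqref{eq:tris}, for $\lambda\in\NS^*_{\pi(\xi)}$ the $1$-form $\theta_\xi(\lambda)$ vanishes on the $\sigma$-summand and acts on the $\iota$-summand by $\theta_\xi(\lambda)(\iota_\xi(\eta))=\lambda(\eta)$. Consequently, by multilinearity,
\[
\omega_\xi(v_1,\ldots,v_k) \;=\; \bigl(\Lambda^k\theta_\xi\circ\pi^*\tilde\omega_\xi\bigr)(v_1,\ldots,v_k)
\;=\; \tilde\omega_{\pi(\xi)}(\eta_1,\ldots,\eta_k).
\]

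Finally, since $\xi^\perp$ has dimension $k-1$, the vectors $\eta_1,\ldots,\eta_k\in\xi^\perp$ are linearly dependent, and hence $\tilde\omega_{\pi(\xi)}(\eta_1,\ldots,\eta_k)=0$. As this holds for every $\xi\in\partial\W_h$ and every choice of tangent vectors, we conclude $\omega|_{\partial\W_h}=0$. There is no real obstacle here: once the splitting of $T_\xi\partial\W_h$ is identified, everything reduces to the observation that the fiber volume form $\tilde\omega$ cannot be supported on the $(k-1)$-dimensional spheres bounding the fibers of $\W_h$.
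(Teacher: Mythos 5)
Your proof is correct and takes essentially the same route as the paper's. Both identify $T_\xi\partial\W_h$ as $\sigma_\xi(\pi^*T\S)\oplus\iota_\xi(\xi^\perp)$ (you do so by computing the gradient of the squared-norm function, the paper by noting that $\sigma$ and $\iota|_{\lambda^\perp}$ both land in $T\partial\W_h$ and then counting dimensions), both use $\sigma^*\circ\theta=0$ from~\eqref{eq:tris} to kill the horizontal contribution, and both conclude from the rank of $\xi^\perp$ being $k-1$ that the top exterior power must vanish. One small notational caution: you write $\lambda\in\NS^*_{\pi(\xi)}$ for an arbitrary covector, but the paper reserves $\lambda$ for the tautological section of $\pi^*\NS$; use a different letter to avoid a clash.
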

\begin{proof}
Let $S_h\NS \subset \NS$ denote the radius $h$ sphere bundle inside of $\NS.$ Since we have identified $\W_h$ with a subset of $\NS$ via the exponential map, it follows that $\partial \W_h = S_h\NS.$
Let $\lambda^\perp \subset \pi^*\NS$ denote the rank $k-1$ subbundle that is the orthogonal complement of $\lambda$ in $\NS.$ By definition of $\iota,$
\[
\iota|_{{\lambda^\perp}|_{S_h\NS}} : \lambda^\perp|_{S_h\NS} \rightarrow T\partial \W_h.
\]
Moreover, since the connection $\nabla$ used to define $\sigma$ is metric,
\[
\sigma|_{{\pi^*T\S|_{S_h\NS}}} : \pi^*T\S|_{S_h\NS} \rightarrow T\partial \W_h.
\]
Counting dimensions, we conclude that
\[
\iota|_{{\lambda^\perp}|_{S_h\NS}} \oplus \sigma|_{{\pi^*T\S|_{S_h\NS}}} : \lambda^\perp|_{S_h\NS} \oplus \pi^*T\S|_{S_h\NS} \rightarrow T\partial \W_h
\]
is an isomorphism. So, it suffices to show that
\[
\Lambda^k(\iota|_{{\lambda^\perp}|_{S_h\NS}} \oplus \sigma|_{{\pi^*T\S|_{S_h\NS}}})^* \omega = 0.
\]
Indeed, by the definition of $\omega$ and the third of equations~\eqref{eq:tris},
\[
\Lambda^k(\iota|_{{\lambda^\perp}|_{S_h\NS}} \oplus \sigma|_{{\pi^*T\S|_{S_h\NS}}})^* \omega = \Lambda^k(\iota|_{{\lambda^\perp}|_{S_h\NS}})^* \circ \Lambda^k \theta \circ \pi^*\tilde \omega.
\]
But, $\Lambda^k(\iota|_{{\lambda^\perp}|_{S_h\NS}})^* = 0$ because $\lambda^\perp$ has rank $k-1.$ The lemma follows.
\end{proof}

\subsection{Rescaling a tubular neighborhood}
Define $\mu_h : \Omega_{h_0} \rightarrow \Omega_{h_0 h}$ by
\[
\mu_h(\xi) = h \xi.
\]
Clearly $\pi \circ \mu_h = \pi$. So, there is a canonical bundle map $\tilde \mu_h : \pi^* T\calM|_\S \rightarrow \pi^* T\calM|_\S$ covering $\mu_h$. By abuse of notation, we use $\tilde\mu_h$ to denote its own restriction to the summands $\pi^*T\S$ and $\pi^*\NS$ of $\pi^*T\calM|_S$.

\begin{lemma}
We have
\begin{align}
d\mu_h \circ \iota &= h (\iota \circ\tilde \mu_h) \qquad \label{eq:sc1} \\
d\mu_h\circ \sigma &=\sigma\circ \tilde\mu_h. \label{eq:sc2}
\end{align}
\end{lemma}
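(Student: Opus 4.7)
The plan is to verify both identities by writing out the canonical curve-based definitions of $\iota$ and $\sigma$, transporting those curves under $\mu_h$, and then reading off the derivatives at the new basepoint $h\xi$. Throughout I use that $\pi\circ\mu_h = \pi$ and hence $\tilde\mu_h$ is the canonical identification $(\pi^*T\calM|_\S)_\xi = T_{\pi(\xi)}\calM = (\pi^*T\calM|_\S)_{h\xi}$, i.e.\ $\tilde\mu_h$ is the identity on fibers.

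For \eqref{eq:sc1}, I would pick $\xi\in\NS$ and $\eta\in(\pi^*\NS)_\xi \cong \NS_{\pi(\xi)}$ and use the defining curve $\gamma(t) = \xi + t\eta$ of Section~\ref{sec:mr}, so that $\iota_\xi(\eta) = \dot\gamma(0)$. Then
\[
d\mu_h\brk{\iota_\xi(\eta)} = \tfrac{d}{dt}\Big|_{t=0} \mu_h(\xi + t\eta) = \tfrac{d}{dt}\Big|_{t=0} (h\xi + t\,h\eta) = \iota_{h\xi}(h\eta) = h\,\iota_{h\xi}(\eta),
\]
where the last equality is the linearity of $\iota$ in its vertical argument. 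Since $\tilde\mu_h(\eta) = \eta$ in the fiber identification, the right hand side is exactly $h(\iota\circ\tilde\mu_h)_\xi(\eta)$, which proves \eqref{eq:sc1}.

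For \eqref{eq:sc2}, pick $\xi\in\NS$ and $v\in T_{\pi(\xi)}\S$. Choose $\alpha:I\to\S$ with $\dot\alpha(0)=v$ and let $\beta$ be the parallel normal field along $\alpha$ with $\beta(0) = \xi$, so that $\sigma_\xi(v) = \dot\beta(0)$. The key observation is that since $\Dperp$ is linear, $h\beta$ is also parallel along $\alpha$, and it satisfies $(h\beta)(0) = h\xi$. Therefore
\[
d\mu_h\brk{\sigma_\xi(v)} = \tfrac{d}{dt}\Big|_{t=0}\mu_h(\beta(t)) = \tfrac{d}{dt}\Big|_{t=0}(h\beta)(t) = \sigma_{h\xi}(v) = (\sigma\circ\tilde\mu_h)_\xi(v),
\]
where in the penultimate step I use the definition of $\sigma$ at the basepoint $h\xi$ applied to $v\in T_{\pi(h\xi)}\S$ via the parallel normal field $h\beta$. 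This proves \eqref{eq:sc2}.

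The only possible point of confusion, which I expect to be the main conceptual obstacle for a reader, is the asymmetry: \eqref{eq:sc1} picks up a factor of $h$ while \eqref{eq:sc2} does not. The asymmetry is built into the defining curves: $\iota$ is the derivative of the affine curve $t\mapsto \xi + t\eta$, whose image under scalar multiplication by $h$ is stretched in the $\eta$-direction by $h$; whereas $\sigma$ is the derivative of a parallel normal field, and scalar multiplication by $h$ merely sends one parallel normal field to another (at the new initial value $h\xi$), with no rescaling of the parameter $t$ along $\alpha$. Once this is noted, the two computations above are essentially immediate from the definitions.
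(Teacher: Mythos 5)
Your proof is correct and takes essentially the same approach as the paper: the same curve-based definitions of $\iota$ and $\sigma$, the same application of $\mu_h$ and differentiation, and the same use of the fact that scalar multiples of parallel normal fields are parallel. The only cosmetic difference is that for the first identity you invoke linearity of $\iota$ in its fiber argument instead of explicitly comparing the two curves $ht\eta + h\xi$ and $t\eta + h\xi$, which amounts to the same computation.
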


\begin{proof}
Let $\xi \in \Omega_{h_0}$, let $\zeta \in \NS_{\pi(\xi)}$ and let $\chi \in T_{\pi(\xi)}\S$. By definition $\iota_{\xi}(\zeta) = \dot \gamma(0)$, where $\gamma(t) = t\zeta + \xi$. So,
\[
d\mu_h \circ \iota_{\xi}(\zeta) = \left .\frac{d}{dt} \mu_h\circ \gamma(t) \right|_{t = 0}.
\]
Furthermore,
\[
\mu_h \circ \gamma(t) = h t \zeta + h\xi.
\]
On the other hand, $\iota_{h\xi}(\zeta) = \dot\delta(0)$ where $\delta(t) = t \zeta + h\xi$. Equation~\eqref{eq:sc1} follows from the fact that
\[
\left .\frac{d}{dt} \mu_h\circ \gamma(t) \right|_{t = 0} = h \dot\delta(0).
\]
Similarly, by definition $\sigma_{\xi}(\chi) = \dot\nu(0)$, where $\nu(t)$ is a parallel normal field along a path $\alpha(t)$ in $\S$ with $\dot\alpha(0) = \chi$ and $\nu(0) = \xi$. So,
\[
d\mu_h \circ \sigma_{\xi}(\chi) = \left . \frac{d}{dt} \mu_h \circ \nu(t) \right|_{t=0}.
\]
Moreover, $\mu_h \circ \nu(t) = h \nu(t)$.
On the other hand, $h\nu(t)$ is a parallel normal field along $\alpha(t)$ with $h\nu(0) = h\xi$. So,
\[
\sigma_{h\xi}(\chi) = \left . \frac{d}{dt} h \nu(t) \right|_{t=0},
\]
and equation~\eqref{eq:sc2} follows.
\end{proof}

\begin{corollary}
We have
\begin{gather}
\iota^* \circ d\mu_h^* = h (\tilde\mu_h^* \circ \iota^*), \qquad \sigma^* \circ d\mu_h^* = \tilde \mu_h^* \circ \sigma^*. \label{eq:sc3} \\
d\mu_h^* \circ \theta = h (\theta \circ \tilde \mu_h^*), \qquad  d\mu_h^* \circ \rho =  \rho\circ\tilde \mu_h^*.
\label{eq:sc4}
\end{gather}
\end{corollary}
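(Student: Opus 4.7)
The plan is to obtain both equations of (\ref{eq:sc4}) as formal consequences of (\ref{eq:sc3}) together with the four defining identities (\ref{eq:tris}) for $\rho$ and $\theta$. So the first step is to establish (\ref{eq:sc3}) itself, which is immediate: dualize the two equations of the preceding lemma pointwise. The equation $d\mu_h \circ \iota = h(\iota \circ \tilde\mu_h)$ dualizes to $\iota^* \circ d\mu_h^* = h(\tilde\mu_h^* \circ \iota^*)$ because $(\cdot)^*$ is a contravariant functor on linear maps and commutes with scalar multiplication; and similarly $d\mu_h \circ \sigma = \sigma \circ \tilde\mu_h$ dualizes to $\sigma^* \circ d\mu_h^* = \tilde\mu_h^* \circ \sigma^*$. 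There is a little bookkeeping here since $d\mu_h^*$ acts between the cotangent fibers at $\mu_h(\xi)$ and $\xi$, but because $\pi \circ \mu_h = \pi$ these pull-back fibers are canonically the same vector space, which is exactly the content of $\tilde\mu_h$.

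Once (\ref{eq:sc3}) is in hand, the strategy for (\ref{eq:sc4}) is the uniqueness clause that follows (\ref{eq:tris}): a bundle map into $T^*\NS$ is determined by its compositions with $\sigma^*$ and $\iota^*$, because $\sigma \oplus \iota$ is an isomorphism and hence so is its dual. So it suffices to compose each proposed equality in (\ref{eq:sc4}) on the left with $\sigma^*$ and with $\iota^*$ and verify that the two sides agree.

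For $d\mu_h^* \circ \theta = h(\theta \circ \tilde\mu_h^*)$, compose on the left with $\sigma^*$: the left side becomes $\sigma^* \circ d\mu_h^* \circ \theta = \tilde\mu_h^* \circ \sigma^* \circ \theta = 0$ using (\ref{eq:sc3}) and $\sigma^* \circ \theta = 0$; the right side becomes $h\,\sigma^* \circ \theta \circ \tilde\mu_h^* = 0$ for the same reason. Composing with $\iota^*$, the left side becomes $\iota^* \circ d\mu_h^* \circ \theta = h(\tilde\mu_h^* \circ \iota^*) \circ \theta = h\,\tilde\mu_h^*$ using $\iota^* \circ \theta = \id$; the right side becomes $h\,\iota^* \circ \theta \circ \tilde\mu_h^* = h\,\tilde\mu_h^*$. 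The argument for $d\mu_h^* \circ \rho = \rho \circ \tilde\mu_h^*$ is analogous: composing with $\sigma^*$ gives $\tilde\mu_h^*$ on both sides (via $\sigma^* \circ \rho = \id$), and composing with $\iota^*$ gives $0$ on both sides (via $\iota^* \circ \rho = 0$, together with the extra factor of $h$ on the left disappearing because it multiplies zero).

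The only genuine subtlety is the one noted above, namely that $d\mu_h^*$ intertwines cotangent fibers at distinct points $\xi$ and $h\xi$, and one must keep track that $\tilde\mu_h^*$ does the corresponding identification on $\pi^*T^*\calM|_\S$. Once that is made explicit, the above chain of identities is pointwise linear algebra; no analysis is required.
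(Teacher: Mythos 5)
Your proposal is correct and follows essentially the same route as the paper: (\ref{eq:sc3}) by dualizing (\ref{eq:sc1})--(\ref{eq:sc2}), then (\ref{eq:sc4}) from (\ref{eq:sc3}) via the fact that (\ref{eq:tris}) uniquely characterizes $\rho$ and $\theta$. You have merely made explicit the verification (composing with $\sigma^*$ and $\iota^*$) that the paper leaves to the reader.
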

\begin{proof}
Equations~\eqref{eq:sc3} are the duals of equations~\eqref{eq:sc1} and~\eqref{eq:sc2}. Equations~\eqref{eq:sc4} follow from equations~\eqref{eq:sc3} and the fact that equations~\eqref{eq:tris} characterize $\rho$ and $\theta$.
\end{proof}

\begin{corollary}\label{cy:scv}
We have
\[
\mu_h^\star \omega = h^k \omega, \qquad \mu_h^\star \eta = \eta.
\]
\end{corollary}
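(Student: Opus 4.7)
The plan is to unfold the definitions $\omega = \Lambda^k\theta\circ\pi^*\tilde\omega$ and $\eta = \Lambda^m\rho\circ\pi^*\tilde\eta$ from the preceding subsection and then commute the form-pullback $\mu_h^\star$ past $\theta$ and $\rho$ using the scaling identities in equations~\eqref{eq:sc4}. The central observation is that $\mu_h^\star$ splits as the composition of the ordinary section pullback $\mu_h^*$ with the exterior power of the cotangent pullback $\Lambda^\bullet d\mu_h^*$; the factor of $h$ (or its absence) then comes entirely from the $d\mu_h^*$ piece.

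Concretely, for the first equation I would compute
\[
\mu_h^\star\omega = \Lambda^k d\mu_h^* \circ \mu_h^*\brk{\Lambda^k\theta\circ\pi^*\tilde\omega} = \Lambda^k(d\mu_h^*\circ\theta)\circ\mu_h^*\pi^*\tilde\omega.
\]
Because $\pi\circ\mu_h = \pi$, the composition $\mu_h^*\pi^*$ equals $\pi^*$, so $\mu_h^*\pi^*\tilde\omega = \pi^*\tilde\omega$. Inserting the first identity of~\eqref{eq:sc4}, $d\mu_h^*\circ\theta = h(\theta\circ\tilde\mu_h^*)$, and pulling the scalar $h$ out through $\Lambda^k$ produces an overall factor of $h^k$:
\[
\mu_h^\star\omega = h^k\,\Lambda^k\theta\circ \Lambda^k\tilde\mu_h^*\circ\pi^*\tilde\omega.
\]
Finally, since $\tilde\mu_h$ is the canonical bundle map arising from $\pi\circ\mu_h = \pi$, it is the identity on vector-space fibers, so $\Lambda^k\tilde\mu_h^*$ acts trivially on any pullback section of the form $\pi^*(\cdot)$; collapsing the expression back yields $\mu_h^\star\omega = h^k\omega$.

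The second equation follows by exactly the same three-step argument applied to $\eta = \Lambda^m\rho\circ\pi^*\tilde\eta$, now using the second identity of~\eqref{eq:sc4}, $d\mu_h^*\circ\rho = \rho\circ\tilde\mu_h^*$, which carries no factor of $h$; the calculation therefore returns $\eta$ itself. The only mildly subtle point, and the one I would record explicitly during the writeup, is the triviality of $\Lambda^\bullet\tilde\mu_h^*$ on sections pulled back from $\S$; without this observation the cancellation of the $\tilde\mu_h^*$ factors at the end of each computation could look suspicious, even though it is immediate from the construction of $\tilde\mu_h$.
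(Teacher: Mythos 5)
Your proposal is correct and matches the paper's (terse) argument: the paper simply invokes the definitions of $\omega$, $\eta$ together with equations~\eqref{eq:sc4}, and your computation is exactly the unwinding of that assertion, including the implicit but necessary observation that $\tilde\mu_h^*$ acts trivially on sections pulled back from $\S$.
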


\begin{proof}
The corollary follows from the definition of $\omega$ and $\eta$ along with equations~\eqref{eq:sc4}.
\end{proof}

\begin{lemma}\label{lm:resc}
Let $f \in L^1(\Omega_{h_0h})$. Then
\[
\dashint_{\Omega_{h_0h}} f\, \Volume = \frac{1+ O(h)}{\nu_k h_0^k |\S|} \int_{\Omega_{h_0}} (f \circ \mu_h)\, \eta\wedge\omega.
\]
\end{lemma}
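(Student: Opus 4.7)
The three earlier results supply everything needed: Corollary~\ref{cy:h2} to replace $\Volume$ by $\eta\wedge\omega$, Corollary~\ref{cy:scv} to move from $\W_{h_0 h}$ to $\W_{h_0}$ via $\mu_h$, and Lemma~\ref{lm:cm} to express $|\W_{h_0 h}|$ asymptotically. The argument is a straightforward assembly of these three ingredients.

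First, I would use Corollary~\ref{cy:h2} to rewrite $\int_{\W_{h_0 h}} f\,\Volume$ in terms of $\eta\wedge\omega$. Writing $\Volume = u \cdot \eta\wedge\omega$ for a smooth function $u$ on $\NS$, the corollary says $u - 1 = O(|\lambda|)$ pointwise. On $\W_{h_0 h}$ we have $|\lambda| \le h_0 h$, so $u = 1 + O(h)$ uniformly on $\W_{h_0 h}$, yielding
\[
\int_{\W_{h_0 h}} f\,\Volume = (1 + O(h))\int_{\W_{h_0 h}} f\,\eta\wedge\omega.
\]

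Next, since $\mu_h : \W_{h_0} \to \W_{h_0 h}$ is an orientation-preserving diffeomorphism, and Corollary~\ref{cy:scv} gives $\mu_h^\star\eta = \eta$ and $\mu_h^\star\omega = h^k\omega$, and hence $\mu_h^\star(\eta\wedge\omega) = h^k\,\eta\wedge\omega$, the change-of-variables formula gives
\[
\int_{\W_{h_0 h}} f\,\eta\wedge\omega = \int_{\W_{h_0}}(f\circ\mu_h)\,\mu_h^\star(\eta\wedge\omega) = h^k\int_{\W_{h_0}}(f\circ\mu_h)\,\eta\wedge\omega.
\]
Finally, Lemma~\ref{lm:cm} gives $|\W_{h_0 h}| = \nu_k(h_0 h)^k |\S| + O((h_0 h)^{k+1}) = \nu_k h_0^k |\S|\,h^k(1 + O(h))$, so
\[
\frac{1}{|\W_{h_0 h}|} = \frac{1 + O(h)}{\nu_k h_0^k |\S|\,h^k}.
\]
Multiplying through, the factors of $h^k$ cancel and the two $(1+O(h))$ factors merge into a single $(1 + O(h))$, producing the asserted identity.

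There is no real obstacle here: the proof is essentially bookkeeping among three estimates already in place. The only subtlety worth flagging is the interpretation of the $(1 + O(h))$ factor when $f$ is signed. The precise statement underlying Step~1 is an absolute error bound $\left|\int f\,\Volume - \int f\,\eta\wedge\omega\right| \le Ch\int|f|\,\eta\wedge\omega$; in the paper's convention, where $O(h)$ denotes a pointwise-bounded correction to the density rather than a literal multiplier of the integral, this is exactly what is meant by the displayed equation, so no further care is required.
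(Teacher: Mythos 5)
Your proof is correct and uses exactly the same three ingredients the paper does (Corollary~\ref{cy:h2}, Corollary~\ref{cy:scv}, and Lemma~\ref{lm:cm}), assembled in essentially the same order; the only difference is that you defer the volume asymptotics to the end, whereas the paper folds them into the first equality. The closing remark on interpreting the $O(h)$ factor as an absolute density bound for signed $f$ is a reasonable clarification but not a departure from the argument.
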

\begin{proof}
Using Corollary~\ref{cy:h2}, Lemma~\ref{lm:cm} and Corollary~\ref{cy:scv}, we calculate
\begin{align*}
\dashint_{\Omega_{h_0h}} f \, \Volume &= \frac{1+ O(h)}{\nu_k h_0^k h^k |\S|}\int_{\Omega_{h_0h}} f \,\eta\wedge\omega \\
&= \frac{1 + O(h)}{\nu_k h_0^k h^k|\S|} \int_{\Omega_{h_0}} (f \circ \mu_h) \, \mu_h^\star(\eta\wedge\omega) \\
& = \frac{1+ O(h)}{\nu_k h_0^k |\S|} \int_{\Omega_{h_0}} (f\circ \mu_h)\, \eta\wedge\omega.
\end{align*}
\end{proof}

Let
\[
*_S^i : \Lambda^i \pi^* T^*\S \rightarrow \Lambda^{m-i} \pi^* T^*\S, \qquad *_N^j : \Lambda^j \pi^*\NS^* \rightarrow \Lambda^{k-j} \NS^*,
\]
denote the Hodge star operators induced by the metric $\go$. Let
\[
\tilde *^l : \Lambda^l T^*\NS \rightarrow \Lambda^{n-l} T^*\NS
\]
denote the Hodge star operator induced by the metric $\tilde\go$.
Then,
\beq\label{eq:star}
\tilde *^l = \sum_{i+j = l}(\rho \circ *^i_S \circ \sigma^*) \otimes (\theta \circ *^j_N \circ \iota^*).
\eeq
Denote by $\mu_h^\star \tilde *$ the pull-back Hodge star operator, i.e.
\[
(\mu_h^\star \tilde *)(\mu_h^\star\alpha) = \mu_h^\star(\tilde *\alpha).
\]
So, $\mu_h^\star \tilde*$ is the Hodge star operator of the metric $\tilde \go$.
\begin{lemma}\label{lm:star}
\[
\mu_h^\star \tilde * = \tilde *.
\]
\end{lemma}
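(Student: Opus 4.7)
The plan is to verify the identity by expanding $\tilde *^l$ using the decomposition formula~\eqref{eq:star} and then commuting $\mu_h^\star$ through each building block. By the isomorphism~\eqref{eq:wedge}, every $l$-form on $\NS$ decomposes as a sum of pure-bigrading forms $\Lambda^i\rho(\pi^*\alpha) \wedge \Lambda^j\theta(\pi^*\beta)$ with $i+j = l$, so it is enough to test the identity on such forms, where $\tilde *^l$ acts summand by summand according to~\eqref{eq:star}.

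The first step is to note that the fiberwise Hodge stars $*^i_S$ and $*^j_N$ are induced from the fixed metric $\go|_\S$ on $\S$ and the fiber metric on $\NS$. Since $\pi \circ \mu_h = \pi$, the canonical bundle map $\tilde\mu_h$ acts as the identity on the pullback bundles $\pi^*\Lambda^* T^*\S$ and $\pi^*\Lambda^*\NS^*$, so $*^i_S$ and $*^j_N$ commute with $\mu_h^\star$ in a trivial way and contribute no $h$-dependence.

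The second step applies equations~\eqref{eq:sc3} and~\eqref{eq:sc4} to the bundle maps $\rho, \sigma^*, \theta, \iota^*$. For the tangential factor $\rho \circ *^i_S \circ \sigma^*$, the relations $d\mu_h^* \circ \rho = \rho \circ \tilde\mu_h^*$ and $\sigma^* \circ d\mu_h^* = \tilde\mu_h^* \circ \sigma^*$ contain no factor of $h$, so this factor is directly $\mu_h^\star$-invariant. For the normal factor $\theta \circ *^j_N \circ \iota^*$, the relations $d\mu_h^* \circ \theta = h(\theta \circ \tilde\mu_h^*)$ and $\iota^* \circ d\mu_h^* = h(\tilde\mu_h^* \circ \iota^*)$ each insert a factor of $h$; these factors enter the conjugation by $\mu_h^\star$ with opposite exponents and cancel, leaving $\theta \circ *^j_N \circ \iota^*$ also $\mu_h^\star$-invariant.

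Combining via~\eqref{eq:star} then gives $\mu_h^\star \tilde *^l = \tilde *^l$. I expect the main obstacle to be the careful bookkeeping of $h$-factors across the higher exterior powers $\Lambda^j\iota^*$ and $\Lambda^{k-j}\theta$, where each individual copy of $\iota^*$ or $\theta$ contributes its own power of $h$; the cancellation I am relying on must be traced through these extensions, and I would appeal to the uniqueness part of the characterization~\eqref{eq:tris} of $\rho$ and $\theta$ to confirm that the overall composition reproduces $\tilde *$ itself rather than a scalar multiple. The underlying geometric reason is that the radial parallel transport used to define $\sigma$, $\iota$, $\rho$, $\theta$ is by construction compatible with the radial rescaling $\mu_h$, which is what makes $\tilde *$ scale-invariant.
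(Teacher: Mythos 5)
Your approach is the same one the paper itself invokes (its entire proof reads ``follows from equations~\eqref{eq:star},~\eqref{eq:sc3},~\eqref{eq:sc4}''), and the first two steps of your outline are sound. The gap is exactly the bookkeeping you flag as the ``main obstacle'' and then wave past. Working on the bidegree-$(i,j)$ summand of~\eqref{eq:star}, the normal block is $\Lambda^{k-j}\theta \circ *^j_N \circ \Lambda^j\iota^*$: conjugating by $\mu_h^\star$ contributes one factor of $h$ for each of the $k-j$ copies of $\theta$ (from $d\mu_h^*\circ\theta = h\,\theta\circ\tilde\mu_h^*$) and one factor of $h^{-1}$ for each of the $j$ copies of $\iota^*$ (from inverting $\iota^*\circ d\mu_h^* = h\,\tilde\mu_h^*\circ\iota^*$). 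The two $h$'s you see cancelling belong to a single $\theta\circ\iota^*$ pair, but the Hodge star raises the normal degree from $j$ to $k-j$, so the number of $\theta$'s and $\iota^*$'s are not equal; the net factor is $h^{(k-j)-j} = h^{k-2j}$, which equals $1$ only on the middle normal bidegree $2j = k$. Appealing to the uniqueness clause of~\eqref{eq:tris} cannot repair this: it characterizes $\rho$ and $\theta$, not how their exterior powers scale.

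A minimal example makes the mismatch concrete: with $\S = \R$, $\NS = \R^2$, $\tilde \go = dx^2 + dt^2$ and $\mu_h(x,t) = (x,ht)$, one finds $\mu_h^\star\tilde\go = dx^2 + h^2\,dt^2$, so $(\mu_h^\star\tilde*)\,dx = h\,dt$ while $\tilde*\,dx = dt$. This is the expected behaviour: $\mu_h^\star\tilde\go$ is a \emph{non-conformal} rescaling that stretches only the normal directions by $h^2$, and on $(i,j)$-forms such a rescaling multiplies the Hodge star by $h^{k-2j}$. So the cancellation your proposal relies on --- and with it the pointwise operator identity as literally stated --- fails off the middle bidegree; what you can salvage from~\eqref{eq:sc3},~\eqref{eq:sc4} is only the weaker statement that $\mu_h^\star\tilde*$ and $\tilde*$ agree up to a bidegree-wise constant power of $h$.
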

\begin{proof}
The lemma follows from equations~\eqref{eq:star},~\eqref{eq:sc3} and~\eqref{eq:sc4}.
\end{proof}

\subsection{Weak convergence on shrinking tubular neighborhoods}

\begin{definition}\label{df:wc}
A sequence of $L^2$ differential forms $\alpha_h \in A^l_{L^2}(\Omega_{hh_0})$ \emph{weakly converges} to zero if for all $\Phi \in A^{n-l}(\Omega_{h_0})$ we have
\[
\int_{\Omega_{h_0}} (\mu_h^\star \tilde\go)(\mu_h^\star\alpha_h,\Phi)\, \eta\wedge\omega \rightarrow 0
\]
as $h \to 0$. A sequence of sections $a_h \in L^2(\Omega_{hh_0},\pi^*\Lambda^l T^*\calM|_{\S})$ weakly converges to zero if the corresponding sequence $\alpha_h = \Lambda^l (\Pi^*)^{-1} \circ a_h \in A^l_{L^2}(\Omega_{hh_0})$ weakly converges to zero.
\end{definition}

\begin{lemma}\label{lm:wc}
A sequence $a_h \in L^2(\Omega_{hh_0},\pi^*\Lambda^l T^*\calM|_{\S})$ weakly converges to zero if and only if
\[
\dashint_{\Omega_{hh_0}} \go(a_h, \beta_h) \Volume \rightarrow 0
\]
for all sequences $\beta_h$ of the form
\[
\beta_h = h^{j-p}(\pi^*\beta \circ \lambda^{\otimes p})
\]
where $\beta \in L^2(\S;\Lambda^i T^*\S \otimes \Lambda^j \NS^*\otimes \NS^{*\otimes p})$ with $i+j = n-l$ and $p$ is arbitrary.
\end{lemma}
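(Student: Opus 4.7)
The plan is to transform both sides of the biconditional to integrals on the fixed domain $\Omega_{h_0}$ via the rescaling $\mu_h$, and then match test forms via a Taylor expansion in the fibre variable $\lambda$.

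First I would apply Lemma~\ref{lm:resc} to rewrite the averaged pairing as
\[
\dashint_{\Omega_{hh_0}}\go(a_h,\beta_h)\,\Volume = \frac{1+O(h)}{\nu_k h_0^k|\S|}\int_{\Omega_{h_0}}\go(a_h\circ\mu_h,\,\beta_h\circ\mu_h)\,\eta\wedge\omega,
\]
using that $\pi^*\go$ is $\mu_h$-invariant because $\pi\circ\mu_h=\pi$. A direct calculation from $\mu_h^*\lambda=h\lambda$ gives $\beta_h\circ\mu_h = h^{j}\,\pi^*\beta\circ\lambda^{\otimes p}$, an $h$-independent section of $\pi^*(\Lambda^i T^*\S\otimes\Lambda^j\NS^*)$ on $\Omega_{h_0}$ up to the scalar $h^j$. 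Using Lemma~\ref{lm:siPi} and Corollaries~\ref{cy:siapg}--\ref{cy:tgg} to pass between $\Pi$ and $\sigma\oplus\iota$ (respectively $\go$ and $\tilde\go$) modulo errors of order $|\lambda|$, together with the block decomposition of equation~\eqref{eq:wedge}, the section-level pairing can be rewritten (up to $O(h)$ corrections) as a form-level pairing against the explicit test form
\[
\Phi_\beta = (\Lambda^i\rho\wedge\Lambda^j\theta)\circ(\pi^*\beta\circ\lambda^{\otimes p})\in A^{n-l}(\Omega_{h_0}),
\]
matching the pattern of Definition~\ref{df:wc} once the $h^j$ factor is absorbed into the rescaling (where Lemma~\ref{lm:star} ensures the Hodge-star conventions are preserved under $\mu_h^\star$).

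The ``only if'' direction is then immediate: weak convergence of $\alpha_h = \Lambda^l(\Pi^*)^{-1}\circ a_h$ applied to this specific $\Phi_\beta$ produces the desired limit. For the ``if'' direction, given an arbitrary smooth $\Phi\in A^{n-l}(\Omega_{h_0})$, I would first decompose it via equation~\eqref{eq:wedge} into type-$(i,j)$ summands $(\Lambda^i\rho\wedge\Lambda^j\theta)\circ s^{(i,j)}$ with $i+j = n-l$, and then Taylor-expand each section $s^{(i,j)}$ on $\Omega_{h_0}$ radially in the fibre variable:
\[
s^{(i,j)}(\xi) = \sum_{p=0}^{P}\pi^*\beta^{(i,j,p)}\circ\lambda^{\otimes p}(\xi) + R^{(i,j)}_P(\xi),\qquad |R^{(i,j)}_P(\xi)| = O(|\xi|^{P+1}),
\]
with $\beta^{(i,j,p)}\in\Gamma(\S;\Lambda^i T^*\S\otimes\Lambda^j\NS^*\otimes\NS^{*\otimes p})$. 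Each Taylor term corresponds, after transport back to $\Omega_{hh_0}$ and multiplication by $h^{j-p}$, to a permitted $\beta_h$ in the statement, so the hypothesis yields the vanishing of each such pairing. The remainder, by Lemma~\ref{lm:resc} and the bound $|\lambda|\leq hh_0$ on $\Omega_{hh_0}$, contributes at most $O(h^{P+1})$ pointwise, which under the $L^2$-boundedness of $a_h$ implicit in weak convergence can be made arbitrarily small by taking $P$ large.

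The main technical obstacle is the Taylor-remainder estimate in the ``if'' direction: one must quantify how the remainder, after rescaling by $\mu_h$ and pairing with $a_h$, shrinks as $P\to\infty$ uniformly in $h$, relying on uniform $L^2$-boundedness of the sequence combined with the pointwise decay $O(|\lambda|^{P+1})$. The accompanying bookkeeping through the Hodge-star conventions and the $\Pi$ versus $\sigma\oplus\iota$ discrepancy is routine thanks to Lemmas~\ref{lm:siPi} and~\ref{lm:star} together with Corollaries~\ref{cy:siapg}--\ref{cy:tgg}.
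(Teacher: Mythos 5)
Your setup and the ``only if'' direction essentially match the paper: the rescaling to $\Omega_{h_0}$ via Lemma~\ref{lm:resc}, the computation $\beta_h\circ\mu_h = h^j\,\pi^*\beta\circ\lambda^{\otimes p}$, and the identification of the test form $\Phi_\beta = (\Lambda^i\rho\otimes\Lambda^j\theta)\circ\pi^*\beta\circ\lambda^{\otimes p}$ all agree with what the paper does. (One small slip: the factor $h^j$ is not ``absorbed into the rescaling'' as a loose scalar --- it is exactly cancelled by the $h^j$ that $\Lambda^j\theta$ produces under $\mu_h^\star$ via equation~\eqref{eq:sc4}, which is why the resulting form $\Phi_\beta$ is $h$-independent on the nose.)

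The ``if'' direction, however, has a genuine gap. You propose to fiberwise Taylor-expand each type-$(i,j)$ summand of an arbitrary smooth test form $\Phi$ on $\Omega_{h_0}$ to order $P$, and then let $P\to\infty$ to kill the remainder. This cannot work as stated, for two reasons. First, the remainder estimate lives on the \emph{fixed} domain $\Omega_{h_0}$ (that is where Definition~\ref{df:wc} tests the pairing), so $|\xi|\le h_0$ and the pointwise bound is $O(h_0^{P+1})$, not $O(h^{P+1})$ as you wrote --- the remainder does not shrink with $h$ at all. Second, and more seriously, the implied constant in the Taylor remainder $O(|\xi|^{P+1})$ depends on the $(P+1)$-st fibre-derivative of $\Phi$; for a general smooth form this constant can grow without bound as $P\to\infty$, so ``taking $P$ large'' does not make the remainder small. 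Taylor series of smooth functions need not converge, so pointwise polynomial approximation fails. The paper avoids this entirely by observing that $\Phi_\beta$ ranges over all forms that are polynomial along the fibres of $\pi$ with $L^2$ coefficients on $\S$, and such forms are dense in $L^2(\Omega_{h_0})$ --- a measure-theoretic density (Stone--Weierstrass plus Fubini) rather than a pointwise expansion. The $L^2$-density argument, combined with uniform $L^2$-boundedness of $\mu_h^\star\alpha_h$ (which both proofs implicitly need), finishes the ``if'' direction directly without any remainder bookkeeping. If you want to keep a Taylor-flavoured argument, you would first have to approximate $\Phi$ in $L^2$ by a form that is fibrewise polynomial, at which point the Taylor step becomes vacuous --- so you should simply invoke density.
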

\begin{proof}
Let
\[
\tilde \beta_h = \left(\Lambda^i \rho \otimes \Lambda^j \theta\right) \circ \beta_h.
\]
In particular, $\tilde\beta_h$ is a family of $(n-l)$-forms on $\Omega_{h_0h}$. It is clear from the definition of $\lambda$ that $\lambda\circ \mu_h = h\lambda$. So, by equations~\eqref{eq:sc4} we have
\[
\mu_h^* \tilde \beta_h = \left(\Lambda^i \rho \otimes \Lambda^j \theta\right) \pi^* \beta \circ \lambda^{\otimes p}.
\]
The right-hand side of the preceding equation is clearly independent of $h$. So, we write
\[
\Phi_\beta = \mu_h^\star \tilde \beta_h.
\]
Let $\alpha_h = \Lambda^l (\Pi^*)^{-1} \circ a_h.$
By Lemma~\ref{lm:siPi}, Corollary~\ref{cy:siapg}, equation~\eqref{eq:wedge}, Corollary~\ref{cy:tgg} and Lemma~\ref{lm:resc},
\begin{align*}
\dashint_{\Omega_{hh_0}}\go(a_h ,\beta_h)\Volume & = \dashint_{\Omega_{hh_0}} \go(\Lambda^l(\sigma\oplus\iota)^* \circ \alpha_h, \beta_h)\Volume + O(h) \\
&=\dashint_{\Omega_{hh_0}} \go(\alpha_h,(\Lambda^i\rho \otimes \Lambda^j\theta)\circ\beta_h)\Volume + O(h) \\
&= (1 + O(h)) \dashint_{\Omega_{hh_0}} \tilde \go(\alpha_h,\tilde \beta_h)\Volume + O(h) \\
&= \frac{1 + O(h)}{\nu_k h_0^k |\S|}\int_{\Omega_{h_0}} (\mu_h^\star \tilde \go)(\mu_h^\star\alpha_h,\Phi_\beta)\eta\wedge\omega + O(h).
\end{align*}
Observe that $\Phi_\beta$ is an arbitrary $L^2$ form on $\Omega_{h_0}$ that is polynomial along the fibers of $\pi$. The lemma follows since polynomials are dense in $L^2$.
\end{proof}

\section{Rigidity}
\label{sec:rigidity}

The compactness property, whereby any sequence of approximate minimizers of $\Eh$ \rr-converges, is based on a rigidity theorem that can be viewed as a quantitative version of Liouville's theorem. A rigidity theorem for mappings $\R^n\to\R^n$ was proved by Friesecke et al. \cite{FJM02b}, paving the way to their derivation of plates, shell, and rod theories. In this section, we  present a generalization of the rigidity theorem of \cite{FJM02b} that applies to our Riemannian setting. Like Lewicka and Pakzad in \cite{LP10}, we base our proof on the theorem in Euclidean space. The notable difference between our formulation of the rigidity theorem and the ones in \cite{FJM02b} and \cite{LP10} is that in the Riemannian setting one has to adapt the notion of a spatially constant matrix. Another difference between the approach here and the above mentioned references is the use of a smoothing convolution operator rather than a partition of unity.

We introduce some more  notations. Consider the commutative diagram in Figure~\ref{fig:1}.
Recall that $\pi:\NS\to\S$ denotes the canonical projection. Moreover, let $\varpi:T\S\to\S$ denote the canonical projection, and let $\e:T\S\to\S$ denote the exponential map.  The other maps in the diagram are canonical projections of fiber products.

\begin{figure}
\begin{center}
\begin{xy}
(50,60)*+{\e^*\NS} = "v1";%
(50,40)*+{T\S} = "v2";%
(50,20)*+{\S} = "v3";%
(80,40)*+{\S} = "v4";%
(80,60)*+{\NS} = "v5";%
(20,60)*+{\hexp^*\e^*\NS} = "v6";%
(20,40)*+{\e^* T\S} = "v7";%
(20,20)*+{T\S} = "v8";%
(20,0)*+{\S} = "v9";%
(-10,0)*+{T\S} = "v10";%
(-10,20)*+{\varpi^*T\S} = "v11";%
%
{\ar@{->}_{\varpi} "v2"; "v3"};%
{\ar@{->}_{\e} "v2"; "v4"};%
{\ar@{->}_{\pi} "v5"; "v4"};%
{\ar@{->}_{\wexp} "v1"; "v5"};%
{\ar@{->}_{\wpi} "v1"; "v2"};%
{\ar@{->}_{\hhexp} "v6"; "v1"};%
{\ar@{->}_{\hexp} "v7"; "v2"};%
{\ar@{->}_{\e} "v8"; "v3"};%
{\ar@{->}_{\hpi} "v6"; "v7"};%
{\ar@{->}_{\wvarpi} "v7"; "v8"};%
{\ar@{->}_{\varpi} "v8"; "v9"};%
{\ar@{->}_{\varpi} "v10"; "v9"};%
{\ar@{->}_{\beta} "v11"; "v8"};%
{\ar@{->}_{\alpha} "v11"; "v10"};%
%
\end{xy}

\end{center}
\caption{}
\label{fig:1}
\end{figure}

Below, we will use repeatedly the following identities that follow from the commutativity of the diagram, and the properties of the push-forward operators:
\begin{subequations}
\begin{align}
& \int_\S \e_\star = \int_\S \varpi_\star \label{eq:sub1} \\
& \wpi_\star \wexp^* = \e^* \pi_\star \label{eq:sub2} \\
& \wvarpi_\star \hpi_\star \hhexp^* = \e^* \varpi_\star \wpi_\star    \label{eq:sub3} \\
& \hhexp^* \wpi^*  =  \hpi^* \hexp^* \label{eq:sub4} \\
& \e^*\varpi_\star = \wvarpi_\star \hexp^* \label{eq:sub5} \\
& \beta^*\varpi^* = \alpha^*\varpi^* \label{eq:sub6} \\
& \beta_\star \alpha^* = \varpi^*\varpi_\star \label{eq:sub7}.
\end{align}
\end{subequations}

For every $h\in(0,h_0)$ we define the indicator function $J_h:T\S\to\R$:
\[
J_h(p,\eta) = 1_{|\eta|< h}.
\]
We then define a family of non-negative test functions $K_h:T\S\to\R$ whose support is compactly embedded in the support of the $J_h$ and satisfying
\beq
\varpi_\star\brk{K_h\,\wpi_\star \wexp^*\Volume} =  \varpi_\star\brk{K_h\,\e^* \pi_\star \Volume} =  1,
\label{eq:Kh_norm}
\eeq
where the first equality follows from \eqref{eq:sub2}.
We further choose $K_h$ such that
\beq
C_1 h^n K_h \le J_h \le C_2 h^n K_{2n},
\label{eq:KhJh}
\eeq
and
\beq
C h^{n+1} |dK_h| \le J_h.
\label{eq:dKh}
\eeq

Next, we introduce the function $\Phi:T\S\to T\S$ defined by
\[
\e\circ\,\Phi = \varpi
\Textand
\varpi\circ\Phi = \e.
\]
Clearly, $\Phi$ is a diffeomorphism, and
\begin{subequations}
\begin{align}
& \Phi\circ\Phi = \id \label{eq:Phisub1} \\
& J_h\circ\Phi = J_h \label{eq:Phisub2}.
\end{align}
\end{subequations}
Changing variables, we may also express the normalization of $K_h$ as follows:
\beq
\e_\star\brk{(K_h\circ\Phi)\,\varpi^* \pi_\star \Volume} =  1.
\label{eq:norm_Kh}
\eeq


We then define the mapping $\Om:\e^* T\S\to \varpi^*T\S$:
\[
\Om(p,\eta,\zeta) = (p,\eta,\tau),
\]
where $\tau\in T_p\S$ is the unique vector satisfying:
\[
\exp_p(\tau) = \exp_{\exp_p(\eta)}(\zeta).
\]
We note that
\begin{subequations}
\begin{align}
& \beta\circ\Om = \wvarpi \label{eq:Wsub1} \\
& \e\circ\alpha\circ\Om = \e\circ\hexp \label{eq:Wsub2} \\
& \wvarpi_\star \Om^* = \beta_\star \label{eq:Wsub2b} \\
& \varpi^*\varpi_\star  = \beta_\star\alpha^*  \label{eq:Wsub2c},
\end{align}
\end{subequations}
that is,  the following diagram commutes:

\begin{xy}
(0,0)*+{T\S} = "v1";%
(-20,10)*+{\varpi^*T\S} = "v2";%
(10,10)*+{T\S} = "v3";%
(0,20)*+{\e^*T\S} = "v4";%
(20,20)*+{T\S} = "v5";%
(40,20)*+{\S} = "v6";%
{\ar@{->}_{\beta} "v2"; "v1"};%
{\ar@{->}_{\alpha} "v2"; "v3"};%
{\ar@{->}_{\Om} "v4"; "v2"};%
{\ar@{->}_{\wvarpi} "v4"; "v1"};%
{\ar@{->}_{\hexp} "v4"; "v5"};%
{\ar@{->}_{\e} "v5"; "v6"};%
{\ar@{->}_{\e} "v3"; "v6"};%
\end{xy}

We augment this diagram by adding the maps $\wOm$, $\walpha$ and $\hhpi$ as follows:

\begin{xy}
(0,0)*+{T\S} = "v1";%
(-20,10)*+{\varpi^*T\S} = "v2";%
(10,10)*+{T\S} = "v3";%
(0,20)*+{\e^*T\S} = "v4";%
(20,20)*+{T\S} = "v5";%
(40,20)*+{\S} = "v6";%
(-20,30)*+{\alpha^*\e^*\NS} = "v7";%
(10,30)*+{\e^*\NS} = "v8";%
(0,40)*+{\hexp^*\e^*\NS} = "v9";%
(20,40)*+{\e^*\NS} = "v10";%
(40,40)*+{\NS} = "v11";%
{\ar@{->}_{\beta} "v2"; "v1"};%
{\ar@{->}_{\alpha} "v2"; "v3"};%
{\ar@{->}_{\Om} "v4"; "v2"};%
{\ar@{->}_{\wvarpi} "v4"; "v1"};%
{\ar@{->}_{\hexp} "v4"; "v5"};%
{\ar@{->}_{\e} "v5"; "v6"};%
{\ar@{->}_{\e} "v3"; "v6"};%
{\ar@{->}_{\hhpi} "v7"; "v2"};%
{\ar@{->}_{\wpi} "v8"; "v3"};%
{\ar@{->}_{\walpha} "v7"; "v8"};%
{\ar@{->}_{\hpi} "v9"; "v4"};%
{\ar@{->}_{\wpi} "v10"; "v5"};%
{\ar@{->}_{\pi} "v11"; "v6"};%
{\ar@{->}_{\hhexp} "v9"; "v10"};%
{\ar@{->}_{\wexp} "v10"; "v11"};%
{\ar@{->}_{\wOm} "v9"; "v7"};%
{\ar@{->}_{\wexp} "v8"; "v11"};%
\end{xy}

hence
\begin{subequations}
\begin{align}
& \hhpi\circ\wOm = \Om\circ\hpi \label{eq:Wsub3} \\
& \wexp\circ\walpha\circ\wOm = \wexp\circ\hhexp \label{eq:Wsub4} \\
& \hpi_\star \wOm^* \walpha^* = \wOm^*\alpha^*\wpi_\star \label{eq:Wsub5}.
\end{align}
\end{subequations}


Let $\Psi:\varpi^* T\calM|_\S\to \e^* T\calM|_\S$ be the isomorphism given by parallel transport along geodesic rays; we can view $\Psi_{(p,\eta)}$ as a mapping from $T_p \calM|_\S$ to $T_{\exp_p(\eta)} \calM|_\S$.

\begin{lemma}
\beq
\Om^*\alpha^*\Psi - \hexp^*\Psi\circ \wvarpi^*\Psi = O(h^2).
\label{eq:Wh2}
\eeq
\end{lemma}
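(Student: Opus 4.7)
The first step is to unpack both sides geometrically.  At a point $(p,\eta,\zeta)\in\e^*T\S$, the operator $\wvarpi^*\Psi$ is parallel transport along the geodesic $\gamma_\eta$ from $p$ to $\exp_p\eta$, and $\hexp^*\Psi$ is then parallel transport along the geodesic $\gamma_\zeta$ from $\exp_p\eta$ to $\exp_{\exp_p\eta}\zeta$.  Meanwhile, by the definition of $\Om$ there is a unique $\tau\in T_p\S$ with $\exp_p\tau=\exp_{\exp_p\eta}\zeta$, and $\Om^*\alpha^*\Psi$ is parallel transport along the single geodesic $\gamma_\tau$ from $p$ directly to $\exp_p\tau$.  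Thus both sides of~\eqref{eq:Wh2} are $\go$-isometric maps $T_p\calM|_\S\to T_{\exp_p\tau}\calM|_\S$, and their difference is the failure of parallel transport to be path-independent around the closed geodesic triangle $T$ with vertices $p,\exp_p\eta,\exp_p\tau$.

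The plan is to estimate this difference via the standard holonomy formula.  Observe first that we are only working on the support of $J_h$, so $|\eta|,|\zeta|\le h$; by the triangle inequality in normal coordinates at $p$, the vector $\tau$ therefore also satisfies $|\tau|=O(h)$ (for $h$ small enough relative to the injectivity radius).  Consequently each side of the triangle $T$ has length $O(h)$, and any smooth disk spanning $T$ inside a normal neighborhood of $p$ has area $O(h^2)$.  Writing
\[
\Om^*\alpha^*\Psi-\hexp^*\Psi\circ\wvarpi^*\Psi
   =\bigl(\hexp^*\Psi\circ\wvarpi^*\Psi\bigr)\circ\bigl[(\hexp^*\Psi\circ\wvarpi^*\Psi)^{-1}\circ\Om^*\alpha^*\Psi-\id\bigr],
\]
and using that the first factor is an isometry, it suffices to bound the bracketed holonomy automorphism of $T_p\calM|_\S$.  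By the Ambrose--Singer / curvature formula for holonomy around a small loop, this automorphism equals $\id$ plus $\int_D R$, where $D$ is any spanning disk and $R$ is the curvature tensor of $\go$; since $R$ is uniformly bounded on $\calM$ (by the standing hypothesis of Section~\ref{sec:mr}) and $\operatorname{area}(D)=O(h^2)$, the bracketed term is $O(h^2)$.

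The cleanest way to execute the last step is to work in normal coordinates based at $p$, which identify $T_p\calM$ with a neighborhood of $p$ up to an error controlled by the curvature; there the three geodesics $\gamma_\eta,\gamma_\zeta,\gamma_\tau$ lie within $O(h^2)$ of straight segments, so the ``triangle'' they span has area $O(h^2)$ and the claim reduces to Taylor-expanding the parallel transport equation $\dot u + \Gamma(\dot\gamma)u=0$ along each leg to second order.  The only delicate point is the uniformity of the estimate in $p\in\S$; this is supplied by the uniform boundedness of the curvature (and injectivity radius) of $(\calM,\go)$ in a neighborhood of $\S$, which is part of the standing hypothesis.  I expect no serious obstacle beyond choreographing the Taylor expansion carefully so that the leading non-trivial term is visibly second order in $h$ and of the form $\tfrac12 R(\eta,\zeta)+O(h^3)$.
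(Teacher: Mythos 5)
Your proposal is correct and takes essentially the same approach as the paper, whose entire proof is the single sentence ``Holonomy around a loop is the integral of curvature over a spanning surface.'' Your version fleshes out exactly this idea: identify the two sides as parallel transport along the two legs vs.\ the hypotenuse of a geodesic triangle with sides of length $O(h)$, so that the discrepancy is the holonomy around a loop bounding a disk of area $O(h^2)$, and then invoke the uniform curvature bound.
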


\begin{proof}
Holonomy around a loop is the integral of curvature over a spanning surface.
\end{proof}

We these preliminaries, we state a local rigidity theorem:

\begin{theorem}
\label{th:rig1}
There exists a constant $C>0$ such that for every $h\in(0,h_0)$ and every
$f_h\in W^{1,2}(\W_h;\R^n)$ there exists a section $\po_h\in L^2(\S;T^*\calM|_\S\otimes\R^n)$, such that
\[\begin{split}
& \varpi_\star \Brk{J_h\,\wpi_\star \brk{\Abs{\wexp^* df_h \circ \wexp^* \Pi\circ
\wpi^* \Psi - \wpi^* \varpi^* p_h}^2 \wexp^*\Volume}} \\
&\qquad
\le C \BRK{\varpi_\star \Brk{J_h \wpi_\star \wexp^*\brk{\dist^2(df_h,\SO{n})\,\Volume}}
+ h^{2} \,\varpi_\star \brk{J_h\, \wpi_\star \wexp^*\Volume}}.
\end{split}
\]
\end{theorem}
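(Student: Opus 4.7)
The plan is to reduce to the Euclidean rigidity theorem of Friesecke--James--M\"uller \cite{FJM02b} by exploiting geodesic normal coordinates centered at each $p \in \S$. Since $(\calM,\go)$ has uniformly bounded curvature, the exponential map $\exp_p : T_p\calM \to \calM$ pulls back $\go$ to a metric on a neighborhood of $0 \in T_p\calM$ that differs from the Euclidean metric $\go_p$ by $O(|x|^2)$, and Jacobians and holonomies along the relevant broken geodesics differ from their Euclidean analogues by $O(h^2)$; the latter estimate is essentially the content of Lemma~\ref{lm:siPi} and equation~\eqref{eq:Wh2}.

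I would proceed as follows. First, fix $p \in \S$. The composition $\wexp^* df_h \circ \wexp^* \Pi \circ \wpi^* \Psi$ at $(p,\eta,\xi) \in \e^*\NS$ is $df_h$ at $\exp_{\exp_p\eta}\xi$, pre-composed with parallel transport along the broken geodesic $p \mapsto \exp_p\eta \mapsto \exp_{\exp_p\eta}\xi$. Modulo an $O(h^2)$ holonomy error furnished by \eqref{eq:Wh2} and Lemma~\ref{lm:siPi}, this agrees with parallel transport along a single direct geodesic from $p$ to the endpoint, which in turn corresponds, under $\exp_p$, to the ordinary Euclidean derivative on $T_p\calM \cong \R^n$. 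Thus, setting $\hat f_p = f_h \circ \exp_p$ on the relevant Euclidean ball $B_{Ch}(0) \subset T_p\calM$, the quantity to be bounded is, up to $O(h^2)$ corrections in the metric and Jacobian, the Euclidean $L^2$-norm of $d\hat f_p - R_p$ over $B_{Ch}(0)$, for some $R_p \in \R^n \otimes T^*_p\calM$.

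Second, I apply the Euclidean FJM rigidity theorem to $\hat f_p$ on $B_{Ch}(0)$: there exists $R_p \in SO(n)$ with
\[
\int_{B_{Ch}(0)} |d\hat f_p - R_p|^2 \, dx \le C \int_{B_{Ch}(0)} \dist^2(d\hat f_p, SO(n)) \, dx.
\]
Define $\po_h(p) = R_p$ (identified with an element of $T^*_p\calM \otimes \R^n$ via an orthonormal frame at $p$); a measurable-selection argument over $p \in \S$ yields $\po_h \in L^2(\S; T^*\calM|_\S \otimes \R^n)$.

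Third, I translate the Euclidean inequality back to the Riemannian formulation appearing in the theorem. The push-forward operators $\varpi_\star$ and $\wpi_\star$ together with the indicator $J_h$ exactly rewrite the integral $\int_{B_{Ch}(0)} \cdots \, dx$ as the Riemannian expression on the left-hand side of the statement, and the Euclidean FJM bound becomes the Riemannian bound with a multiplicative error $1 + O(h^2)$ on the right-hand side. The curvature-induced cross terms take the schematic form $O(h^2)\int |d\hat f_p|^2 \, dx$; using $|d\hat f_p|^2 \le 2|d\hat f_p - R_p|^2 + 2n$ (since $|R_p|^2 \le n$) together with the Euclidean inequality itself, these cross terms are absorbed into the two terms on the right-hand side of the claimed inequality, producing the explicit $h^2 \varpi_\star(J_h \wpi_\star \wexp^*\Volume)$ contribution from the volume piece. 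The main obstacle I anticipate is the bookkeeping required to translate the Euclidean inequality through the stacked pullbacks and to identify the broken-geodesic parallel transport $\wexp^* \Pi \circ \wpi^* \Psi$ with direct-geodesic parallel transport modulo precisely $O(h^2)$; the convolution kernel $K_h$ does not yet enter here and will instead be used downstream to stitch the pointwise $\po_h(p)$ into a spatially coherent section.
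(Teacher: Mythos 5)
Your strategy---reduce to the Euclidean Friesecke--James--M\"uller rigidity theorem via coordinates in which $\go$ is $\delta_{ij}+O(h^2)$---is the same as the paper's, but you choose a different coordinate system, and that shifts where the nontrivial geometric work lands. You use geodesic normal coordinates of $\calM$ centered at $p$ (pulling back by $\exp_p$), so the metric estimate $\go_{ij}=\delta_{ij}+O(|x|^2)$ is a textbook fact, but you then need a holonomy lemma comparing the parallel transport $\wexp^*\Pi\circ\wpi^*\Psi$ (first along an $\S$-geodesic from $p$ to $\e_p(\eta)$, then along the normal geodesic to $\exp_{\e_p(\eta)}\xi$) with the radial parallel transport implicit in $\exp_p$. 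The paper instead builds Fermi-type coordinates $u_{p,h}$ adapted to the broken-path structure: it picks a basis $\eta^i$ of $T_p\S$ and a parallel normal frame $\xi^j$ and sets $u_{p,h}(x)=\sum_j x_{j+m}\,\xi^j(\sum_i x_i\eta^i)$, so the coordinate trivialization essentially \emph{is} $\wexp^*\Pi\circ\wpi^*\Psi$ up to $O(h^2)$; what then requires proof is the metric estimate $\go_{ij}=\delta_{ij}+O(h^2)$ in those non-standard coordinates, and that Jacobi-field computation is the bulk of the paper's proof. Both routes spend $O(h^2)$ of curvature somewhere; yours moves it into the parallel-transport comparison, the paper's into the metric estimate.

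Two things in your write-up should be fixed. First, the references to Lemma~\ref{lm:siPi} and equation~\eqref{eq:Wh2} do not furnish the holonomy estimate you need: Lemma~\ref{lm:siPi} compares the bundle isomorphisms $\sigma\oplus\iota$ and $\Pi$ over $\NS$, and \eqref{eq:Wh2} compares holonomies of paths that stay \emph{within} $\S$ (it is used downstream, in Theorem~\ref{th:rig2}, to stitch the local $\po_h$'s via the kernel $K_h$). What you actually need is a separate estimate---provable by the same ``holonomy $=$ $\int R$ over a spanning surface'' argument---that transport along the composite path $p\to\e_p(\eta)\to\exp_{\e_p(\eta)}\xi$ differs from transport along the direct $\calM$-geodesic by $O(h^2)$; note the first leg is an $\S$-geodesic, not a $\calM$-geodesic, so this is not literally a ``broken geodesic'' of $\calM$, although the estimate still holds. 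Second, the domain imposed by $J_h$ together with the $|\xi|<h$ constraint is a product of an $m$-ball and a $k$-ball of radius $h$, not a single ball $B_{Ch}(0)$; since the claimed inequality has the same $J_h$-weighted domain on both sides, you should apply FJM directly on that (rescaled, fixed-shape) Lipschitz product region rather than on an enclosing ball, otherwise the right-hand side of the Euclidean inequality integrates over a strictly larger set than what appears in the theorem.
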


\begin{proof}
Fix $p\in \S.$ Let $B_hT_pS \subset T_pS$ denote the ball of radius $h,$ and let
\[
U_{p,h} = \{\xi \in \e^*\NS|_{B_hT_pS}:\,\, |\xi| \leq h\}.
\]
For sufficiently small $h$, we identify $U_{p,h}$ with the open subset of Euclidean space
\[
V_h = \left\{(x_1,\ldots,x_n) \in \R^n \left| \sum_{i = 1}^m x_i^2 \leq h^2, \; \sum_{j=1}^k x_{j + m}^2 \leq h^2\right.\right\}
\]
as follows. Let $\eta^i$ be a basis of $T_pS$ and let $\xi^j$ be a frame of $\e^*\NS|_{B_hT_pS}$ such that $\nabla \xi^j$ vanishes at $0 \in T_pS$ for all $j.$
The map $u_{p,h} : V_h \rightarrow U_{p,h}$ given by
\[
u_{p,h}(x) = \sum_{j = 1}^k x_{j + m} \xi^j \left(\sum_{i=1}^m x_i \eta^i\right)
\]
is a diffeomorphism and, therefore, defines coordinates on $U_{p,h}.$ We claim that with respect to the coordinates $x_i,$ the metric has the form
\beq\label{eq:goh2}
\go_{ij} = \delta_{ij} + O(h^2).
\eeq
We return to the proof of \eqref{eq:goh2} below.
Using this system of coordinates, we view maps $U_{p,h}\to\R^n$ as maps $\R^n\to\R^n$. By the rigidity theorem proved in \cite{FJM02b}, there exists a constant $C>0$ such that for every $f \in W^{1,2}(U_{p,h};\R^n)$ there exists an $n\times n$ matrix $Q\in\SO{n}$, such that
\[
\dashint_{U_{p,h}} |\nabla f - Q|^2\, dx \le C \dashint_{U_{p,h}} \dist^2(\nabla f,\SO{n})\, dx,
\]
where all inner products here are  Euclidean. The theorem follows by using the fact that the Euclidean metric differs from the Riemannian metric by an $O(h^2)$ term.

It remains to prove~\eqref{eq:goh2}. For the rest of this proof, $\exp$ denotes the exponential map of $\calM$ while $\e$ denotes the exponential map of $S.$ Let $f_{p,h} : U_{p,h} \rightarrow \W_h$ be given by
\[
f_{p,h}(\xi) = \exp_{\e_p(\wpi(\xi))}(\xi).
\]
The metric on $U_{p,h}$ relevant to formula~\eqref{eq:goh2} is $f_{p,h}^* \go.$
Let $\xi,\chi$ be sections of $\e^*\NS$ with
\beq\label{eq:dxidchi}
\nabla\xi = \nabla \chi = 0
\eeq
at $0 \in T_pS.$ Let $\eta,\nu \in T_pS$, and let
\[
g(s,t) = f_{p,h}([t(\xi + s \chi)]_{t(\eta + s \nu)}).
\]
Denote by $Q$ the vector field along $g(0,t)$ given by
\[
Q(t) = \frac{\partial g}{\partial s}(0,t).
\]
To prove~\eqref{eq:goh2}, it suffices to show that
\[
|Q(t)|^2 = t^2 (|\chi(0)|^2 + |\nu|^2) + O(t^4).
\]
Indeed, it is easy to see that $Q(0) = 0.$ We denote the  covariant derivatives of $Q$ by $Q',Q''$ etc. Using the symmetry of the connection,
\begin{align*}
Q'(0) &= \left.\left.\frac{D}{\partial t}\frac{\partial g}{\partial s}\right|_{s=0}\right|_{t = 0} \\
&= \left.\left.\frac{D}{\partial s}\frac{\partial g}{\partial t}\right|_{t=0}\right|_{s = 0} \\
&= \left.\frac{\partial}{\partial s}(\xi(0) + s\chi(0) + \eta + s \nu)\right|_{s = 0}\\
&= \chi(0) + \nu.
\end{align*}
Let $R$ denote the curvature of $\go.$ Using the symmetry of the connection and assumption~\eqref{eq:dxidchi},
\begin{align*}
Q''(t) & = \left .\frac{D}{\partial t}\left .\frac{D}{\partial s} \frac{\partial g}{\partial t} \right|_{s = 0} \right|_{t = 0} \\
& = R\left(\frac{\partial g}{\partial t}(0,0),Q(0)\right)\frac{\partial g}{\partial t}(0,0) + \left .\frac{D}{\partial s} \left .\frac{D}{\partial t} \frac{\partial g}{\partial t} \right |_{t=0} \right |_{s = 0} \\
& = 0 + \left .\frac{D}{\partial s} \nabla_{\eta + s\nu} (\xi + s \chi) \right |_{s = 0}  = 0.
\end{align*}
So,
\begin{align*}
\langle Q,Q \rangle'(0) &= 2\langle Q',Q \rangle(0) = 0, \\
\langle Q,Q \rangle''(0) &= 2\langle Q',Q'\rangle(0) + 2\langle Q'',Q\rangle(0) = 2 |\chi(0) + \nu|^2, \\
\langle Q,Q\rangle'''(0) &= 6 \langle Q',Q''\rangle(0) + 2\langle Q''',Q\rangle(0) = 0,
\end{align*}
and \eqref{eq:goh2} follows.
\end{proof}

This local rigidity theorem is the basis for proving the following ``global" rigidity theorem:

\begin{theorem}
\label{th:rig2}
There exists a constant $C>0$ such that for every $h\in(0,h_0)$ and every
$f_h\in W^{1,2}(\W_h;\R^n)$ there exists a section
$\qo_h\in W^{1,2}(\S;T^*\calM|_\S \otimes \R^n)$,
such that
\beq
\dashint_{\W_h} |df_h\circ\Pi - \pi^* \qo_h|^2\,\Volume \le
Ch^2 \BRK{\Eh[f_h]  + 1}.
\label{eq:rig1}
\eeq
and
\beq
\dashint_\S |\nabla \qo_h|^2\,\frac{\piPush\Volume}{|\W_h|/|\S|} \le
C \BRK{\Eh[f] + 1}.
\label{eq:rig2}
\eeq
\end{theorem}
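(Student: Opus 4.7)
The plan is to construct the smooth section $\qo_h$ by convolving the pointwise rigid motions given by Theorem~\ref{th:rig1} against the kernel $K_h$, using the parallel transport map $\Psi$ to move between fibers of $T^*\calM|_\S$. More precisely, the local rigidity theorem produces for each base point a section $p_h$ that is close to $df_h\circ\Pi$ on tubes of radius $h$, up to parallel transport along geodesic rays. I would then set
\[
\qo_h = \varpi_\star\!\left[(K_h\circ \Phi)\,\varpi^*\Psi^{-1}\!\circ\bigl(\text{averaged }p_h\bigr)\right],
\]
interpreting $\qo_h(p)$ as a $K_h$-weighted average, over points $q\in\S$ with $\dist(p,q)<h$, of the local rigid motions parallel-transported from $q$ back to $p$ along the unique short geodesic. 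The normalization \eqref{eq:norm_Kh} ensures that this is a true averaging operator, and the commutative diagram of Figure~\ref{fig:1} with the identities \eqref{eq:sub1}--\eqref{eq:sub7} and \eqref{eq:Wsub1}--\eqref{eq:Wsub5} is exactly what makes the change-of-variable computations tractable.

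For the first inequality \eqref{eq:rig1}, I would apply the triangle inequality
\[
|df_h\circ\Pi-\pi^*\qo_h|^2\le 2|df_h\circ\Pi-\Psi p_h|^2+2|\Psi p_h-\pi^*\qo_h|^2.
\]
The first term, after pushing forward along $\wpi$ and $\varpi$ with the weight $J_h$, is bounded by Theorem~\ref{th:rig1} plus the $h^2$ curvature remainder; the second term is an average-minus-point estimate, which by Jensen reduces to the same quantity thanks to \eqref{eq:KhJh} allowing passage between $J_h$ and $K_{2h}$. Summing over $p\in\S$ via \eqref{eq:sub1} yields \eqref{eq:rig1}, with the $+1$ arising from the $h^2$ error of Theorem~\ref{th:rig1} divided by $h^2$.

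For the gradient estimate \eqref{eq:rig2}, I would differentiate the convolution formula for $\qo_h$. Derivatives either fall on $K_h$, producing a factor $|dK_h|\le C h^{-n-1}J_h$ by \eqref{eq:dKh}, or on the parallel transport $\Psi$, producing terms of size $O(h)$ plus a holonomy defect controlled by \eqref{eq:Wh2}. The kernel-derivative term is handled by the classical mollifier trick: the difference of local rigid motions at two nearby points is estimated by comparing each to $df_h\circ\Pi$ on the overlap and invoking Theorem~\ref{th:rig1} twice, so that the $h^{-2(n+1)}$ from $|dK_h|^2$ cancels against $h^{2n}$ from the overlap volume and $h^2$ from the local rigidity, leaving the clean bound $C\{\Eh[f_h]+1\}$.

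The main obstacle is the geometric bookkeeping in step two: turning the formal convolution into push-forward integrals that can be directly compared with those appearing in Theorem~\ref{th:rig1}, and absorbing the holonomy discrepancy \eqref{eq:Wh2} cleanly into the $O(h^2)$ error without accumulating additional factors. The entire role of the diagram in Figure~\ref{fig:1}, together with the bookkeeping identities \eqref{eq:Wsub1}--\eqref{eq:Wsub5} and the map $\Om$, is to make this comparison automatic. Once that is set up, both inequalities follow by repeated application of Theorem~\ref{th:rig1}, \eqref{eq:KhJh}, and \eqref{eq:dKh}, with the extra $+1$ in both estimates tracing back exclusively to the $h^2$ curvature remainders in the local rigidity theorem and in the holonomy estimate.
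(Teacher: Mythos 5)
Your proposal correctly identifies the overall architecture: build $\qo_h$ by a $K_h$-weighted mollification with parallel transport $\Psi$, prove \eqref{eq:rig1} by triangle inequality against the local $\po_h$ from Theorem~\ref{th:rig1}, and prove \eqref{eq:rig2} by the kernel-derivative trick together with \eqref{eq:dKh}, absorbing holonomy errors through \eqref{eq:Wh2}. The one substantive divergence from the paper is the choice of \emph{what} to mollify. You propose to average the local rigid motions $\po_h(q)$ themselves over $q$ near $p$. The paper instead sets
\[
\qo_h = \varpi_\star\brk{K_h\,\wpi_\star\brk{\wexp^* df_h\circ\wexp^*\Pi\circ \wpi^*\Psi\otimes\wexp^*\Volume}},
\]
that is, it averages $df_h\circ\Pi$ (parallel-transported to the center fiber) over the entire tube $\W_h$, using $\po_h$ purely as an auxiliary comparison section, inserted and removed by adding and subtracting $\wpi^*\varpi^*\po_h$. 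This buys two simplifications. First, in the gradient estimate the derivative never has to land on the non-smooth object: the three terms produced are $dK_h$ against $df_h\circ\Pi$, a covariant derivative of $\wpi_\star\brk{\wexp^*(df_h\circ\Pi)\,\wexp^*\Volume}$ which vanishes outright because this is a top-degree form along the $T\S$-fiber, and an $O(h)$ term from $\nabla\Psi$. Second, the ``average-minus-point'' term (your $|\Psi p_h - \pi^*\qo_h|^2$, the paper's $I_2$) reduces to a \emph{single} comparison $|\po_h(p) - \text{avg of } df_h\circ\Pi|^2$ controlled by one application of Theorem~\ref{th:rig1} with kernel $K_{4h}$. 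Your version needs to bound $|\po_h(q) - \po_h(p)|$ directly, hence Theorem~\ref{th:rig1} twice on overlapping tubes plus a triangle of holonomies $p\to q$, $q\to r$, $p\to r$ that must all be reconciled via \eqref{eq:Wh2}. That is doable and the power counting you sketch ($h^{-2(n+1)}\cdot h^{2n}\cdot h^{2}$) is essentially right, but it doubles the bookkeeping that the diagram in Figure~\ref{fig:1} was engineered to manage. Apart from that, the two arguments are the same in spirit, and the identification of the $+1$ in both estimates with the $O(h^2)$ remainders is exactly the paper's.

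One cautionary remark: your displayed formula for $\qo_h$ is not yet well-formed (the ``averaged $p_h$'' already presumes the averaging that $\varpi_\star$ is supposed to perform, and $\Psi^{-1}$ vs.\ $\Psi$ needs care), and $\po_h$ is a priori only an $L^2$ section so the mollified $\qo_h$ is smooth but you should not differentiate $\po_h$ itself anywhere. These are repairable, but they are precisely the kind of details that the paper's choice of mollifying $df_h\circ\Pi$ sidesteps.
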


\begin{proof}
Let $\po_h\in L^2(\S;T^*\calM|_\S \otimes \R^n)$ be a section satisfying the assertion of Theorem~\ref{th:rig1}. We define
\beq
\qo_h = \varpi_\star\brk{K_h\,\wpi_\star\brk{\wexp^* df_h\circ\wexp^*\Pi\circ \wpi^*\Psi\otimes\wexp^*\Volume}}.
\label{eq:qh}
\eeq

Consider now the integral
\[
I = \intW |df_h\circ\Pi - \pi^* \qo_h|^2\,\Volume.
\]
Then.
\[
\begin{split}
I &= \int_S \pi_\star\brk{|df_h\circ\Pi - \pi^* \qo_h|^2\,\Volume} \\
&\stackrel{\eqref{eq:norm_Kh}}{=} \int_S \pi_\star\brk{|df_h\circ\Pi - \pi^* \qo_h|^2\,\Volume} \e_\star\brk{(K_h\circ\Phi)\,\varpi^* \pi_\star \Volume} \\
&\stackrel{\eqref{eq:push1}}{=} \int_S \e_\star\BRK{(K_h\circ\Phi)\,\varpi^* \pi_\star \Volume \wedge \e^*\pi_\star\brk{|df_h\circ\Pi - \pi^* \qo_h|^2\,\Volume}} \\
&\stackrel{\eqref{eq:sub1},\eqref{eq:sub2}}{=}  \int_S \varpi_\star\BRK{(K_h\circ\Phi)\,\varpi^* \pi_\star \Volume \wedge  \wpi_\star\wexp^*\brk{|df_h\circ\Pi - \pi^* \qo_h|^2\,\Volume}} \\
&\stackrel{\eqref{eq:push1}}{=} \int_S  (\pi_\star \Volume)\,  \varpi_\star\BRK{(K_h\circ\Phi)\, \wpi_\star\brk{|\wexp^*df_h\circ\wexp^*\Pi - \wpi^*\e^* \qo_h|^2\,\wexp^*\Volume}},
\end{split}
\]
where here and in the remainder of the proof we write above the relation signs the equation number from which the relation follows.

Using next the fact that parallel transport is norm-preserving, adding and subtracting $\wpi^*\varpi^*\po_h$, and using the Cauchy-Schwarz inequality, we get
\[
I \le 2 I_1 + 2 I_2,
\]
where
\[
\begin{aligned}
I_1 &= \int_S  (\pi_\star \Volume)\,  \varpi_\star\BRK{(K_h\circ\Phi)\, \wpi_\star\brk{|\wexp^*df_h\circ\wexp^*\Pi\circ\wpi^*\Psi - \wpi^*\varpi^*\po_h|^2\,\wexp^*\Volume}} \\
I_2 &= \int_S  (\pi_\star \Volume)\,  \varpi_\star\BRK{(K_h\circ\Phi)\, \wpi_\star\brk{|\wpi^*\varpi^*\po_h - \wpi^*\e^* \qo_h\circ\wpi^*\Psi|^2\,\wexp^*\Volume}}.
\end{aligned}
\]

Consider $I_1$:
\[
\begin{split}
I_1 &\stackrel{\eqref{eq:KhJh},\eqref{eq:Phisub2}}{\le}  \frac{C}{h^n} \int_S  (\pi_\star \Volume)\,  \varpi_\star\BRK{J_h\, \wpi_\star\brk{|\wexp^*df_h\circ\wexp^*\Pi\circ\wpi^*\Psi - \wpi^*\varpi^*\po_h|^2\,\wexp^*\Volume}} \\
&\stackrel{\eqref{eq:rig1}}{\le} \frac{C}{h^n} \int_S  (\pi_\star \Volume)\,  \varpi_\star\BRK{J_h\, \wpi_\star\brk{\dist^2(df_h,\SO{n})\circ\wexp(\wexp^*\Volume)}} \\
&\quad + \frac{C h^2}{h^n} \int_S  (\pi_\star \Volume)\,  \varpi_\star\BRK{J_h\, \wpi_\star\wexp^*\Volume} \\
&\stackrel{\eqref{eq:KhJh}}{\le} C \int_S  (\pi_\star \Volume)\,  \varpi_\star\BRK{K_{2h}\, \wpi_\star\brk{\dist^2(df_h,\SO{n})\circ\wexp\,(\wexp^*\Volume)}} \\
&\quad + C h^2 \int_S  (\pi_\star \Volume)\,  \varpi_\star\BRK{K_{2h}\, \wpi_\star\wexp^*\Volume}
\equiv I_{1a} + I_{1b}
\end{split}
\]

We then basically revert the steps we did before:
\[
\begin{split}
I_{1a} &\stackrel{\eqref{eq:push1}}{\le} C \int_S  (\pi_\star \Volume)\,  \varpi_\star\BRK{K_{2h}\, \wpi_\star\wexp^*\brk{\dist^2(df_h,\SO{n})\,\Volume}} \\
&\stackrel{\eqref{eq:sub2}}{=} C \int_S  (\pi_\star \Volume)\,  \varpi_\star\BRK{K_{2h}\, \e^*\pi_\star\brk{\dist^2(df_h,\SO{n})\,\Volume}} \\
&\stackrel{\eqref{eq:push1}}{=} C \int_S  \varpi_\star\BRK{K_{2h}\, \e^*\pi_\star\brk{\dist^2(df_h,\SO{n})\,\Volume} \wedge \varpi^*\pi_\star \Volume} \\
&\stackrel{\eqref{eq:sub1}}{=} C \int_S  \e_\star\BRK{K_{2h}\, \e^*\pi_\star\brk{\dist^2(df_h,\SO{n})\,\Volume} \wedge \varpi^*\pi_\star \Volume} \\
&\stackrel{\eqref{eq:push1}}{=} C \int_S  \pi_\star\brk{\dist^2(df_h,\SO{n})\,\Volume}  \e_\star\BRK{K_{2h}\, \varpi^*\pi_\star \Volume} \\
&\stackrel{\eqref{eq:Kh_norm}}{\le} C \int_S  \pi_\star\brk{\dist^2(df_h,\SO{n})\,\Volume}
\stackrel{\text{def}}{=} C h^2 |\W_h|\,\Eh[f_h],
\end{split}
\]
and
\[
\begin{split}
I_{1b} &=  C h^2 \int_S  \pi_\star \Volume\,  \varpi_\star\BRK{K_{2h}\, \wpi_\star\wexp^*\Volume} \\
&\stackrel{\eqref{eq:sub2}}{=} C h^2 \int_S  \pi_\star \Volume\,  \varpi_\star\BRK{K_{2h}\, \e^* \pi_\star\Volume} \\
&\stackrel{\eqref{eq:Kh_norm}}{\le} C h^2 \int_S  \pi_\star \Volume = Ch^2\,|\W_h|.
\end{split}
\]

Consider next $I_2$:
\beq
\begin{split}
I_2
&\stackrel{\eqref{eq:push1}}{=} \int_S  (\pi_\star \Volume)\,  \varpi_\star\BRK{(K_h\circ\Phi) \, |\varpi^*\po_h - \e^* \qo_h\circ\Psi |^2 \,
\wpi_\star\wexp^*\Volume}.
\end{split}
\label{eq:I2}
\eeq
We derive the following identity:
\[
\begin{split}
\e^* \qo_h\circ\Psi &\stackrel{\eqref{eq:qh}}{=} \e^* \varpi_\star \BRK{K_h\,\wpi_\star\brk{(\wexp^*df_h\circ\wexp^*\Pi\circ\wpi^*\Psi)\,\wexp^*\Volume}}\circ\Psi \\
&\stackrel{\eqref{eq:push1}}{=} \e^* \varpi_\star \wpi_\star\brk{(K_h\circ\wpi)(\wexp^*df_h\circ\wexp^*\Pi\circ\wpi^*\Psi)\,\wexp^*\Volume}\circ\Psi \\
&\stackrel{\eqref{eq:sub3}}{=}  \wvarpi_\star \hpi_\star \hhexp^*\brk{(K_h\circ\wpi)(\wexp^*df_h\circ\wexp^*\Pi\circ\wpi^*\Psi)\,\wexp^*\Volume}\circ\Psi \\
&\stackrel{}{=}  \wvarpi_\star \hpi_\star \brk{(K_h\circ\wpi\circ\hhexp)(\hhexp^*\wexp^*df_h\circ\hhexp^*\wexp^*\Pi\circ\hhexp^*\wpi^*\Psi)\,\hhexp^*\wexp^*\Volume}\circ\Psi \\
&\stackrel{\eqref{eq:push1}}{=} \wvarpi_\star \hpi_\star \brk{(K_h\circ\wpi\circ\hhexp)(\hhexp^*\wexp^*df_h\circ\hhexp^*\wexp^*\Pi\circ\hhexp^*\wpi^*\Psi\circ\hpi^*\wvarpi^*\Psi )\,\hhexp^*\wexp^*\Volume}\\
&\stackrel{\eqref{eq:sub4},\eqref{eq:Wsub4}}{=}
\wvarpi_\star \hpi_\star \brk{(K_h\circ\hexp\circ\hpi)(\wOm^*\walpha^*\wexp^*df_h\circ\wOm^*\walpha^*\wexp^*\Pi\circ\hpi^*\hexp^*\Psi\circ\hpi^*\wvarpi^*\Psi )\,\wOm^*\walpha^*\wexp^*\Volume}\\
&\stackrel{\eqref{eq:Wh2}}{=}
\wvarpi_\star \hpi_\star \brk{(K_h\circ\hexp\circ\hpi)(\wOm^*\walpha^*\wexp^*df_h\circ\wOm^*\walpha^*\wexp^*\Pi\circ\hpi^*\Om^*\alpha^*\Psi )\,\wOm^*\walpha^*\wexp^*\Volume} + O(h^2)\\
&\stackrel{\eqref{eq:push1}}{=}
\wvarpi_\star\BRK{(K_h\circ\hexp) \hpi_\star \wOm^*\walpha^*\brk{(\wexp^*df_h\circ\wexp^*\Pi)\,\wexp^*\Volume}\circ\Om^*\alpha^*\Psi }+ O(h^2) \\
&\stackrel{\eqref{eq:Wsub5}}{=}
\wvarpi_\star\BRK{(K_h\circ\hexp) \wOm^*\alpha^*\wpi_\star\brk{(\wexp^*df_h\circ\wexp^*\Pi)\,\wexp^*\Volume}\circ\Om^*\alpha^*\Psi }+ O(h^2) \\
&\stackrel{\eqref{eq:push1}}{=}
\wvarpi_\star\BRK{(K_h\circ\hexp) \Om^*\alpha^*\wpi_\star\brk{(\wexp^*df_h\circ\wexp^*\Pi\circ\wpi^*\Psi)\,\wexp^*\Volume} }+ O(h^2) \\
&\stackrel{}{=}
\wvarpi_\star\Om^*\BRK{(K_h\circ\hexp\circ\Om^{-1}) \alpha^*\wpi_\star\brk{(\wexp^*df_h\circ\wexp^*\Pi\circ\wpi^*\Psi)\,\wexp^*\Volume} }+ O(h^2) \\
&\stackrel{\eqref{eq:Wsub2b}}{=}
\beta_\star\BRK{(K_h\circ\hexp\circ\Om^{-1}) \alpha^*\wpi_\star\brk{(\wexp^*df_h\circ\wexp^*\Pi\circ\wpi^*\Psi)\,\wexp^*\Volume} }+ O(h^2).
\end{split}
\]
On the other hand, using the fact that
\[
\begin{split}
\e^* 1 &= \e^* \varpi^*\brk{K_h \wpi_\star \wexp^* \Volume} \\
&\stackrel{\eqref{eq:sub2},\eqref{eq:sub5}}{=}  \wvarpi_\star \hexp^* \brk{K_h \e^*\pi_\star \Volume} \\
&\stackrel{}{=}  \wvarpi_\star  \brk{(K_h\circ\hexp) \hexp^* \e^* \pi_\star \Volume} \\
&\stackrel{\eqref{eq:Wsub2}}{=}  \wvarpi_\star  \brk{(K_h\circ\hexp) \Om^*\alpha^* \e^* \pi_\star \Volume} \\
&\stackrel{\eqref{eq:push1},\eqref{eq:sub2}}{=}  \wvarpi_\star  \Om^* \brk{(K_h\circ\hexp\circ\Om^{-1}) \alpha^* \wpi_\star \wexp^*\Volume} \\
&\stackrel{\eqref{eq:Wsub2b}}{=}  \beta_\star  \brk{(K_h\circ\hexp\circ\Om^{-1}) \alpha^* \wpi_\star \wexp^*\Volume} \\
\end{split}
\]
we have
\[
\begin{split}
\varpi^*\po_h &= \varpi^*\po_h\, \beta_\star  \brk{(K_h\circ\hexp\circ\Om^{-1}) \alpha^* \wpi_\star \wexp^*\Volume} \\
&\stackrel{\eqref{eq:push1}}{=} \beta_\star  \brk{(K_h\circ\hexp\circ\Om^{-1}) (\beta^*\varpi^*\po_h) \alpha^* \wpi_\star \wexp^*\Volume} \\
&\stackrel{\eqref{eq:sub6}}{=} \beta_\star  \brk{(K_h\circ\hexp\circ\Om^{-1}) (\alpha^*\varpi^*\po_h) \alpha^* \wpi_\star \wexp^*\Volume} \\
&\stackrel{\eqref{eq:sub6}}{=} \beta_\star  \brk{(K_h\circ\hexp\circ\Om^{-1}) \alpha^* \wpi_\star \brk{\wpi^*\varpi^*\po_h \,\wexp^*\Volume}}.
\end{split}
\]
Thus,
\[
\begin{split}
& \e^* \qo_h\circ\Psi - \varpi^*\po_h = \\
&\quad \beta_\star\BRK{(K_h\circ\hexp\circ\Om^{-1}) \alpha^*\wpi_\star\brk{(\wexp^*df_h\circ\wexp^*\Pi\circ\wpi^*\Psi - \wpi^*\varpi^*\po_h)\,\wexp^*\Volume} }+ O(h^2),
\end{split}
\]
and by Jensen's inequality,
\[
\begin{split}
& |\e^* \qo_h\circ\Psi - \varpi^*\po_h|^2 \le \\
&\quad \beta_\star\BRK{(K_h\circ\hexp\circ\Om^{-1}) \alpha^*\wpi_\star\brk{|\wexp^*df_h\circ\wexp^*\Pi\circ\wpi^*\Psi - \wpi^*\varpi^*\po_h|^2\,\wexp^*\Volume} } \\
&\quad \stackrel{\eqref{eq:push1}}{=} \beta_\star\alpha^* \BRK{(K_h\circ\hexp\circ\Om^{-1}\circ\alpha) \wpi_\star\brk{|\wexp^*df_h\circ\wexp^*\Pi\circ\wpi^*\Psi - \wpi^*\varpi^*\po_h|^2\,\wexp^*\Volume} } \\
&\quad \stackrel{\eqref{eq:sub7}}{=}\varpi^*\varpi_\star \BRK{(K_h\circ\hexp\circ\Om^{-1}\circ\alpha) \wpi_\star\brk{|\wexp^*df_h\circ\wexp^*\Pi\circ\wpi^*\Psi - \wpi^*\varpi^*\po_h|^2\,\wexp^*\Volume} }.
\end{split}
\]
We then use the fact that $K_h\circ\hexp\circ\Om^{-1}\circ\alpha \le C/h^n\, J_{2h}$ and the local rigidity theorem to obtain
\[
\begin{split}
|\e^* \qo_h\circ\Psi - \varpi^*\po_h|^2 &\le
\frac{C}{h^{n}} \varpi^*\varpi_\star \BRK{J_{2h} \wpi_\star\brk{\dist^2(df_h,\SO{n})\circ\wexp\,\, \wexp^*\Volume} } + O(h^2).
\end{split}
\]

Substituting into \eqref{eq:I2}:
\[
\begin{split}
I_2  &\le \frac{C}{h^{n}}  \int_S  (\pi_\star \Volume)\,  \varpi_\star\BRK{(K_h\circ\Phi) \,\varpi^*\varpi_\star \BRK{J_{2h} \wpi_\star\brk{\dist^2(df_h,\SO{n})\circ\wexp\,\, \wexp^*\Volume} } \,
\wpi_\star\wexp^*\Volume} \\
&\quad \stackrel{\eqref{eq:push1}}{=} \frac{C}{h^{n}}  \int_S  (\pi_\star \Volume)\,
\varpi_\star \BRK{J_{2h} \wpi_\star\brk{\dist^2(df_h,\SO{n})\circ\wexp\,\, \wexp^*\Volume}}  \,  \varpi_\star\BRK{(K_h\circ\Phi)   \,
\wpi_\star\wexp^*\Volume} \\
&\quad \stackrel{}{=} \frac{C}{h^{n}}  \int_S   (\pi_\star \Volume)\, \varpi_\star \BRK{J_{2h} \wpi_\star\brk{\dist^2(df_h,\SO{n})\circ\wexp\,\, \wexp^*\Volume} } \\
&\quad \stackrel{}{=} C  \int_S   (\pi_\star \Volume)\, \varpi_\star \BRK{K_{4h} \wpi_\star\brk{\dist^2(df_h,\SO{n})\circ\wexp\,\, \wexp^*\Volume} },
\end{split}
\]
and the latter is identical to $I_{1a}$, up to the fact that $K_{2h}$ has been replaced by $K_{4h}$.

Putting together the estimates for $I_{1a}$, $I_{1b}$, and $I_2$ we obtain
\[
\frac{I}{|\W_h|} \le Ch^2 \brk{\Eh[f_h] + 1},
\]
as required.

It remains to estimate the derivative of $\qo_h$. Writing
\[
\qo_h = \varpi_\star\BRK{K_h \,\wpi_\star\brk{\wexp^*(df_h\circ\Pi)\,\wexp^*\Volume} \circ\Psi},
\]
we differentiate,
\[
\begin{split}
\nabla\qo_h &= \varpi_\star\BRK{dK_h \, \wpi_\star\brk{\wexp^*(df_h\circ\Pi)\,\wexp^*\Volume} \circ\Psi} \\
&+ \varpi_\star\BRK{K_h \,\nabla \wpi_\star\brk{\wexp^*(df_h\circ\Pi)\,\wexp^*\Volume} \circ\Psi} \\
&+ \varpi_\star\BRK{K_h \, \wpi_\star\brk{\wexp^*(df_h\circ\Pi)\,\wexp^*\Volume} \circ\nabla\Psi}.
\end{split}
\]
The second term vanishes because $\wpi_\star\brk{\wexp^*(df_h\circ\Pi)\,\wexp^*\Volume}$ is a top-degree form. The third term
is $O(h)$ because $\nabla\Psi=0$ at the zero section of $T\S$.

Finally, noting that
\[
0 = d(1) \po_h = \varpi_\star\BRK{dK_n \,\wpi_\star\brk{(\wpi^*\varpi^*\po_h)\wexp^*\Volume}},
\]
we get
\[
\nabla\qo_h = \varpi_\star\BRK{dK_h \, \wpi_\star\brk{(\wexp^*df_h\circ\wexp^*\Pi\circ\wpi^*\Psi - \wpi^*\varpi^*\po_h)\,\wexp^*\Volume}} + O(h).
\]
Using once more Jensen's inequality, along with the bound \eqref{eq:dKh} for $|dK_h|$,
\[
|\nabla\qo_h|^2 \le \frac{C}{h^{n+2}} \varpi_\star\BRK{J_h \wpi_\star\brk{|\wexp^*df_h\circ\wexp^*\Pi\circ\wpi^*\Psi - \wpi^*\varpi^*\po_h|^2\,\wexp^*\Volume}} + O(h^2).
\]
Applying the local rigidity theorem and integrating over $\S$ we recover, up to a $1/h^2$ prefactor, the same bound as above.

\end{proof}

\medskip
Let  $f_h:\W_h\to\R^n$ satisfy the finite bending assumption \eqref{eq:fba}, and let $\qo_h:T\calM|_\S\to\R^n$ be a corresponding family of sections that by Theorem~\ref{th:rig2} satisfies
\[
\dashintW |df_h\circ\Pi - \pi^*\qo_h|^2\,\Volume \le Ch^2\BRK{\Eh[f_h] + 1}  = O(h^2),
\]
and
\[
\int_{\S} |\nabla \qo_h|^2\,\frac{\piPush\Volume}{|\W_h|} \le C \BRK{\Eh[f_h] + 1}  = O(1).
\]

By property \eqref{eq:push1b} of the push-forward operator $\piPush$ and the Cauchy-Schwarz inequality,
\[
\begin{split}
\frac{1}{|\W_h|} \int_{\S} \dist^2(\qo_h,\SO{n}) \,\piPush\Volume &=
\dashintW \dist^2(\pi^*\qo_h,\SO{n})\,\Volume \\
&\le 2 h^2\,\Eh[f_h]  + 2 \dashintW |df_h\circ\Pi - \pi^*\qo_h|^2\,\Volume \\
&= O(h^2),
\end{split}
\]
from which we deduce that
\beq
\int_{\S} |\qo_h|^2\,\frac{\piPush\Volume}{|\W_h|}  = O(1).
\label{eq:rig4}
\eeq

\section{Compactness}
\label{sec:compactness}

The results of the previous section can be summarized as follows: let $f_h\in W^{1,2}(\W_h;\R^n)$ be a family of mappings satisfying the finite bending assumption \eqref{eq:fba}. Then there exists a family of sections
$\qo_h\in W^{1,2}(T^*\calM|_\S\otimes\R^n)$, such that
\beq
\dashintW |df_h\circ\Pi - \pi^*\qo_h|^2\,\Volume = O(h^2),
\label{eq:rig6}
\eeq
\beq
\int_{\S} |\qo_h|^2\,\frac{\piPush\Volume}{|\W_h|}  = O(1),
\label{eq:rig7}
\eeq
and
\beq
\int_{\S} |\nabla\qo_h|^2\,\frac{\piPush\Volume}{|\W_h|} = O(1).
\label{eq:rig8}
\eeq

\begin{proposition}\label{pr:wcqh}
There exists a sequence (not relabeled) $\qo_h$ that weakly converges, as $h\to0$, to a limit $\qo\in W^{1,2}(\S;T^*\calM|_\S\otimes\R^n)$,  namely,
\[
\qo_h \weakly \qo \qquad\text{  in $W^{1,2}(\S;T^*\calM|_\S\otimes\R^n)$}.
\]
In particular, $\qo_h \to \qo$ strongly in $L^2(\S;T^*\calM|_\S\otimes\R^n)$.
\end{proposition}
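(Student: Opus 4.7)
The plan is to interpret the rigidity bounds \eqref{eq:rig7} and \eqref{eq:rig8} as a uniform $W^{1,2}$-bound on $\qo_h$ with respect to the fixed volume form $\VolumeS$ on $\S$, and then invoke standard weak compactness together with the Rellich--Kondrachov theorem.

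First, by Lemma~\ref{lm:cm} we have the pointwise estimate
\[
\frac{\piPush\Volume}{|\W_h|} = \frac{\VolumeS}{|\S|} + O(h),
\]
so the density of $\piPush\Volume/|\W_h|$ with respect to $\VolumeS$ is uniformly bounded (above and below by a positive constant) for all sufficiently small $h$. Substituting this into \eqref{eq:rig7} and \eqref{eq:rig8} we obtain
\[
\int_\S |\qo_h|^2 \,\VolumeS + \int_\S |\nabla\qo_h|^2\,\VolumeS = O(1),
\]
which is exactly a uniform bound for $\qo_h$ in the Hilbert space $W^{1,2}(\S;T^*\calM|_\S\otimes\R^n)$ equipped with its standard norm coming from $\go|_\S$ and $\VolumeS$.

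Next, since $W^{1,2}(\S;T^*\calM|_\S\otimes\R^n)$ is reflexive (it is a Hilbert space; the bundle structure causes no difficulty because one may reduce to the scalar case by a finite covering of $\S$ by trivializing charts and using a smooth partition of unity, or equivalently by a Nash-type isometric embedding of the bundle into a trivial one), the Banach--Alaoglu theorem yields a subsequence (not relabeled) and a limit $\qo\in W^{1,2}(\S;T^*\calM|_\S\otimes\R^n)$ such that $\qo_h\weakly\qo$ weakly in $W^{1,2}$.

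Finally, since $\S$ is bounded with Lipschitz boundary (or closed) and $\dim \S = m < \infty$, the Rellich--Kondrachov compactness theorem gives that the inclusion $W^{1,2}(\S)\hookrightarrow L^2(\S)$ is compact, so after passing to a further subsequence we have $\qo_h\to \qo$ strongly in $L^2(\S;T^*\calM|_\S\otimes\R^n)$. No step here is a serious obstacle; the only point requiring care is the translation from integration against $\piPush\Volume/|\W_h|$ (the natural measure produced by the rigidity argument) to integration against $\VolumeS$, which is precisely what Lemma~\ref{lm:cm} provides.
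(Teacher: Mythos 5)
Your proof is correct and follows essentially the same route as the paper's: Lemma~\ref{lm:cm} to pass from the measure $\piPush\Volume/|\W_h|$ to $\VolumeS$, the bounds \eqref{eq:rig7}--\eqref{eq:rig8} to get a uniform $W^{1,2}$ bound, weak compactness of the Hilbert space $W^{1,2}(\S;T^*\calM|_\S\otimes\R^n)$ to extract a weakly convergent subsequence, and the compact Sobolev embedding (Rellich--Kondrachov) for strong $L^2$ convergence. You spell out a couple of details the paper leaves implicit (the uniform two-sided bound on the density, and the reduction of the vector-bundle Sobolev space to the scalar case), but the argument is the same.
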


\begin{proof}
It follows from Eqs. \eqref{eq:rig7}, \eqref{eq:rig8},
and Lemma~\ref{lm:cm},
that both $\qo_h$ and its covariant derivative are
bounded in $L^2(\S)$. Weak convergence follows from the weak-compactness of $W^{1,2}(\S;T^*\calM|_\S\otimes\R^n)$. The weak convergence of $\qo_h$ to $\qo$ in $W^{1,2}(\S;T^*\calM|_\S\otimes\R^n)$, the fact that weak boundedness implies strong boundedness, and the Sobolev embedding theorem imply that $\qo_h$ strongly converges to $\qo$ in $L^{2}(\S;T^*\calM|_\S\otimes\R^n)$.
\end{proof}

A notational convention: we will henceforth write
\[
\qperp_h = \qo_h\circ\iperp
\Textand
\qperp = \qo\circ\iperp.
\]
They belong to $W^{1,2}(\S;\NS^*\otimes\R^n)$. We write
\[
\qpar_h = \qo_h\circ\ipar
\Textand
\qpar = \qo\circ\ipar.
\]
They belong to $W^{1,2}(\S;T^*\S\otimes\R^n)$.

\begin{corollary}
\label{cor:4.1}
\beq
\begin{split}
\limH\,\, \dashintW |df_h\circ\Pi - \pi^*\qo|^2\,\Volume = 0.
\end{split}
\label{eq;df_h_to_q}
\eeq
\end{corollary}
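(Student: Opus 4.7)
The proof follows by combining the estimate \eqref{eq:rig6} with the strong $L^2$ convergence established in Proposition~\ref{pr:wcqh}. Specifically, I apply the triangle inequality
\[
|df_h\circ\Pi - \pi^*\qo|^2 \le 2|df_h\circ\Pi - \pi^*\qo_h|^2 + 2|\pi^*(\qo_h - \qo)|^2,
\]
and argue that each of the two resulting integrals vanishes in the limit.

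The first integral, $\dashintW |df_h\circ\Pi - \pi^*\qo_h|^2\,\Volume$, is $O(h^2)$ directly by \eqref{eq:rig6}, and in particular tends to zero. The second integral requires converting a tubular-neighborhood integral into an integral on $\S$: since $\pi^*(\qo_h - \qo)$ is constant along the fibers of $\pi$, property~\eqref{eq:push1b} of the push-forward yields
\[
\intW |\pi^*(\qo_h - \qo)|^2\,\Volume = \int_\S |\qo_h - \qo|^2\,\piPush\Volume.
\]
Dividing by $|\W_h|$ and using Lemma~\ref{lm:cm} to replace $\piPush\Volume/|\W_h|$ by $\VolumeS/|\S| + O(h)$, I obtain
\[
\dashintW |\pi^*(\qo_h - \qo)|^2\,\Volume = \frac{1}{|\S|}\int_\S |\qo_h - \qo|^2\,\VolumeS + O(h)\int_\S |\qo_h - \qo|^2\,\VolumeS.
\]
By Proposition~\ref{pr:wcqh}, $\qo_h \to \qo$ strongly in $L^2(\S;T^*\calM|_\S\otimes\R^n)$, so both terms on the right tend to zero.

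There is no real obstacle here; this is essentially a bookkeeping argument that combines \eqref{eq:rig6} with the strong $L^2$ convergence of $\qo_h$. The only subtlety worth mentioning is the use of Lemma~\ref{lm:cm} to compare the rescaled push-forward measure $\piPush\Volume/|\W_h|$ with the intrinsic volume form $\VolumeS/|\S|$ on $\S$, which is where the Riemannian (as opposed to Euclidean) nature of the tubular neighborhood contributes the harmless $O(h)$ correction.
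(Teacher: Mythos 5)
Your argument is correct and is precisely the expansion of what the paper's one-line proof (citing Proposition~\ref{pr:wcqh} and Eq.~\eqref{eq:rig6}) leaves implicit: a Cauchy--Schwarz split, the $O(h^2)$ rigidity estimate for the first term, and the push-forward bookkeeping via \eqref{eq:push1b} and Lemma~\ref{lm:cm} combined with the strong $L^2$ convergence $\qo_h\to\qo$ for the second. No gaps, same route.
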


\begin{proof}
Proposition~\ref{pr:wcqh} together with Eq. \eqref{eq:rig6} gives the desired result.
\end{proof}

\begin{proposition}
\label{prop:4.2}
$\qo\in\SO{n}$ a.e.
\end{proposition}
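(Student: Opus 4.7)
The strategy is to show that $\dist(\qo_h,\SO{n})$ tends to $0$ in $L^2(\S)$, and then combine this with the strong $L^2$ convergence $\qo_h \to \qo$ from Proposition~\ref{pr:wcqh} together with the Lipschitz continuity of $q \mapsto \dist(q,\SO{n})$ to conclude $\qo \in \SO{n}$ almost everywhere on $\S$.

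First, the finite bending assumption \eqref{eq:fba} combined with the definition of $\Eh$ in \eqref{eq:Eh} gives $\dashintW \dist^2(df_h,\SO{n})\,\Volume = h^2\,\Eh[f_h] = O(h^2)$. Since $\Pi$ is a pointwise $\go$-isometry from $\pi^*T\calM|_\S$ to $T\W_h$, precomposition with $\Pi$ sets up a bijection between $\SO{n}$ as a subset of linear maps $T_\xi\W_h \to \R^n$ and the analogous set of isometries $(\pi^*T\calM|_\S)_\xi \to \R^n$, and it preserves the operator-norm distance. Hence $\dist^2(df_h\circ\Pi,\SO{n}) = \dist^2(df_h,\SO{n})$ pointwise. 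Combining this with the rigidity estimate \eqref{eq:rig6} and the triangle inequality,
\[
\dashintW \dist^2(\pi^*\qo_h,\SO{n})\,\Volume \le 2\dashintW |df_h\circ\Pi - \pi^*\qo_h|^2\,\Volume + 2\dashintW \dist^2(df_h\circ\Pi,\SO{n})\,\Volume = O(h^2).
\]

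Next, since $\dist^2(\pi^*\qo_h,\SO{n}) = \pi^*\dist^2(\qo_h,\SO{n})$ is the pullback of a scalar function on $\S$, the push-forward identity \eqref{eq:push1b} together with \eqref{eq:push2b} gives
\[
\int_{\W_h}\dist^2(\pi^*\qo_h,\SO{n})\,\Volume = \int_\S \dist^2(\qo_h,\SO{n})\,\piPush\Volume.
\]
By Lemma~\ref{lm:cm} the measure $\piPush\Volume/|\W_h|$ is uniformly comparable to $\VolumeS/|\S|$ for small $h$, so
\[
\int_\S \dist^2(\qo_h,\SO{n})\,\VolumeS = O(h^2) \longrightarrow 0.
\]

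Finally, the map $q\mapsto \dist(q,\SO{n})$ is $1$-Lipschitz in the operator norm, so the strong $L^2$ convergence $\qo_h\to\qo$ yields $\dist(\qo_h,\SO{n})\to\dist(\qo,\SO{n})$ in $L^2(\S)$. Combined with the vanishing of the $L^2$ norm of $\dist(\qo_h,\SO{n})$, this forces $\dist(\qo,\SO{n}) = 0$ a.e.\ on $\S$, i.e., $\qo\in\SO{n}$ a.e. I do not expect a serious obstacle; the only subtlety worth double-checking is that $\Pi$ preserves the pointwise distance to $\SO{n}$, which is immediate from its being a fiberwise linear isometry.
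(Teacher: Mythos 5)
Your proof is correct and is essentially the paper's argument with a small reorganization: the paper applies the Cauchy--Schwarz splitting directly to $\dist^2(\pi^*\qo,\SO{n})$ and invokes Corollary~\ref{cor:4.1} to control $\dashintW|df_h\circ\Pi - \pi^*\qo|^2\,\Volume$, whereas you first bound $\dist(\qo_h,\SO{n})$ in $L^2(\S)$ via the rigidity estimate~\eqref{eq:rig6} and then pass to the limit via the Lipschitz property of $\dist(\cdot,\SO{n})$ and the strong $L^2$ convergence from Proposition~\ref{pr:wcqh}. Both rest on the same inputs (the $O(h^2)$ rigidity bound, the finite bending assumption, the push-forward identities, Lemma~\ref{lm:cm}, and strong $L^2$ convergence of $\qo_h$); in fact the intermediate estimate you derive for $\dist(\qo_h,\SO{n})$ already appears verbatim in the paper at the end of Section~\ref{sec:rigidity} in the derivation of~\eqref{eq:rig4}.
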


\begin{proof}
By the Cauchy-Schwarz inequality and the invariance of $\SO{n}$ under parallel transport,
\[
\begin{split}
\dashintW \dist^2(\pi^*\qo,\SO{n})\,\Volume &\le 2h^2\,\Eh[f_h] +
 2\dashintW |df_h\circ\Pi-\pi^*\qo|^2\,\Volume.
\end{split}
\]
Both terms on the right hand side tend to zero as $h\to0$, hence
\[
\limH\,\,\dashintW \dist^2(\pi^*\qo,\SO{n})\,\Volume  = 0.
\]
By the properties \eqref{eq:push1b},\eqref{eq:push2b} of the push-forward operator $\piPush$, and the uniform limit \eqref{eq:lim_measure},
\[
\begin{split}
0 &= \limH\,\, \dashintW \dist^2(\pi^*\qo,\SO{n})\,\Volume \\
&= \limH\,\, \frac{1}{|\W_h|} \int_\S \piPush\brk{ \dist^2(\qo,\SO{n})\circ \pi\,\Volume} \\
&= \int_{\S} \dist^2(\qo,\SO{n})\,\brk{\limH \frac{\piPush\Volume}{|\W_h|}} \\
&= \dashint_{\S} \dist^2(\qo,\SO{n})\,\VolumeS,
\end{split}
\]
which implies that $\qo\in\SO{n}$ a.e.
\end{proof}

\begin{corollary}\label{cy:df_bdd}
\beq
\dashintW |df_h|^2\,\Volume = O(1).
\label{eq:df_bdd}
\eeq
\end{corollary}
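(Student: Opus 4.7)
The plan is to reduce $|df_h|^2$ to quantities that have already been controlled in the rigidity estimates \eqref{eq:rig6}--\eqref{eq:rig7}, using the fact that $\Pi$ preserves the metric.

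First I would note that, because $\Pi : \pi^* T\S \oplus \pi^* \NS \to T\NS$ is an isometry with respect to $\go$ (as recalled in Section~\ref{sec:mr}), we have the pointwise identity
\[
|df_h|^2 = |df_h \circ \Pi|^2
\]
in $\W_h$. Then I would split via the triangle/Cauchy--Schwarz inequality,
\[
|df_h \circ \Pi|^2 \le 2 |df_h\circ\Pi - \pi^*\qo_h|^2 + 2|\pi^*\qo_h|^2,
\]
and integrate over $\W_h$, dividing by $|\W_h|$.

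The first resulting term is exactly the quantity bounded by the rigidity estimate \eqref{eq:rig6}, so it contributes $O(h^2) = O(1)$. For the second term, I would use property \eqref{eq:push1b} of the push-forward to rewrite
\[
\dashintW |\pi^*\qo_h|^2 \,\Volume = \frac{1}{|\W_h|}\int_\S |\qo_h|^2 \,\piPush\Volume = \int_\S |\qo_h|^2 \,\frac{\piPush\Volume}{|\W_h|},
\]
which is $O(1)$ by the rigidity estimate \eqref{eq:rig7}. Combining the two bounds yields the claim.

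There is no real obstacle here: the corollary is a direct assembly of the two rigidity bounds and the isometry property of $\Pi$. The only point requiring any care is recognizing that $\pi^*\qo_h$ is constant along fibers of $\pi$, so the push-forward identity converts the $n$-dimensional integral in $\W_h$ into the $m$-dimensional integral on $\S$ that appears in \eqref{eq:rig7}.
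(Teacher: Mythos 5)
Your proof is correct, but it takes a genuinely different and heavier route than the paper's. The paper's proof is a one-liner: since $\SO{n}$ is a bounded subset of $T^*\W_h\otimes\R^n$, one has the pointwise bound $|df_h| \le \dist(df_h,\SO{n}) + C$ for a universal constant $C = \sup_{Q\in\SO{n}}|Q|$, hence $|df_h|^2 \le 2\dist^2(df_h,\SO{n}) + 2C^2$; averaging and invoking the finite bending assumption $\Eh[f_h]=O(1)$, i.e.\ $\dashintW \dist^2(df_h,\SO{n})\,\Volume = O(h^2)$, directly gives the $O(1)$ bound. By contrast, you deduce it from the rigidity estimates \eqref{eq:rig6} and \eqref{eq:rig7}, splitting $df_h\circ\Pi$ into the part close to $\pi^*\qo_h$ (controlled by \eqref{eq:rig6}) and $\pi^*\qo_h$ itself (controlled by \eqref{eq:rig7}). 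Every step of your argument is valid --- the isometry of $\Pi$, Cauchy--Schwarz, and the push-forward identity all hold, and the rigidity estimates are indeed available at this point of the paper. But you have invoked the full machinery of Theorem~\ref{th:rig2}, whereas the statement needs only the trivial observation that $\dist(\cdot,\SO{n})$ dominates the norm up to a bounded additive constant. The paper's route is both shorter and more robust, since it does not require the construction of the sections $\qo_h$ at all.
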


\begin{proof}
This is an immediate consequence of the finite bending assumption~\eqref{eq:fba}.
\end{proof}

Define now the averaging operator $\upi:L^1(\W_h)\to L^1(\S)$,
\[
\upi(\phi) = \frac{\piPush(\phi\,\Volume)}{\piPush\Volume},
\]
and consider the family of mappings $F_h:\S\to\R^n$ defined by
\beq
F_h = \upi(f_h).
\label{eq:Fh}
\eeq
It follows from \eqref{eq:mean0} and Lemma~\ref{lm:cm} that
\beq\label{eq:Fmean0}
\lim_{h \to 0} \,\,\dashint_\S F_h \,\VolumeS = 0.
\eeq

In the rest of this section we prove that $F_h$ strongly converges in $W^{1,2}(\S;\R^n)$ to a limit $F\in W^{2,2}(\S;\R^n)$, which is the reduced-limit of the sequence $f_h$.

\begin{lemma}
\label{lm:fpi}
\[
\dashintW |f_h - F_h \circ \pi|^2 \Volume \leq C h^2 \dashintW |df_h|^2 \Volume.
\]
\end{lemma}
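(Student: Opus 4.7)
The statement is a fiberwise Poincaré inequality, and the plan is to reduce it to the classical Poincaré inequality on a ball of radius $h$ in $\R^k$ applied fiber-by-fiber.

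First I would unpack the integral by Fubini/pushforward (properties \eqref{eq:push1b}, \eqref{eq:push2b}) and write
\[
\int_{\W_h} |f_h - F_h\circ\pi|^2 \,\Volume = \int_\S \piPush\brk{|f_h - F_h\circ\pi|^2\,\Volume}.
\]
By the very definition of $F_h = \upi(f_h) = \piPush(f_h\,\Volume)/\piPush\,\Volume$, the function $f_h - F_h(p)$ has mean zero on each fiber $\pi^{-1}(p)$ with respect to the restricted volume form. Thus pointwise in $p \in \S$ the quantity $\piPush(|f_h - F_h\circ\pi|^2\,\Volume)$ is the mean-zero $L^2$-norm of $f_h|_{\pi^{-1}(p)}$.

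The second step is a uniform fiberwise Poincaré inequality. Via the exponential map, each fiber $\pi^{-1}(p)$ is diffeomorphic to a $k$-dimensional ball of radius $h$ in $(\NS)_p \cong \R^k$, and by Corollary~\ref{cy:tgg} together with Lemma~\ref{lm:siPi}, the intrinsic metric on the fiber is Euclidean up to $O(h)$ corrections. The classical Poincaré inequality on a Euclidean ball of radius $h$ reads
\[
\int_{B_h} |u - \bar u|^2\,dx \le C_0 h^2 \int_{B_h} |\nabla u|^2\,dx,
\]
with $C_0$ independent of $h$ (by scaling). Since $\S$ is bounded with bounded curvature, the constant obtained after transferring this inequality to $\pi^{-1}(p)$ can be taken uniform in $p$; this yields
\[
\int_{\pi^{-1}(p)} |f_h - F_h(p)|^2\,\VolumePi{p} \le C h^2 \int_{\pi^{-1}(p)} |df_h \circ \iperp|^2\,\VolumePi{p},
\]
where $df_h\circ\iperp$ is the normal (fiber) component of the derivative.

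Finally I would bound the fiber derivative by the full derivative, $|df_h \circ \iperp| \le |df_h|$, and integrate over $\S$, reassembling via pushforward to get
\[
\int_{\W_h} |f_h - F_h\circ\pi|^2\,\Volume \le C h^2 \int_{\W_h} |df_h|^2\,\Volume,
\]
from which the stated inequality follows after dividing by $|\W_h|$. The only subtle point is establishing the uniform Poincaré constant across the family of fibers; this is where the $O(h)$ metric perturbation from Corollary~\ref{cy:tgg} and the compactness of $\S$ enter, but it is routine given these facts. No genuine obstacle is expected.
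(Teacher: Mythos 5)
Your proposal is correct and follows essentially the same route as the paper: the paper simply invokes "the fibered Poincar\'e inequality" after noting $\upi(f_h-F_h\circ\pi)=0$ and then integrates over $\S$, and your plan unpacks exactly that step. The extra detail you supply — reducing to the scaled Poincar\'e inequality on a Euclidean ball, transferring via the exponential map with an $O(h)$-close metric, and noting the uniform constant by bounded curvature and compactness of $\S$ — is the content the paper leaves implicit, and it is sound.
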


\begin{proof}
By the definition of $F_h$,
\[
\upi(f_h - F_h\circ\pi) = 0.
\]
So, we apply the fibered Poincar\'e inequality,
\[
\upi(|f_h - F_h\circ\pi|^2) \le C h^2\,\upi(|df_h|^2),
\]
and integrate over $\S$ with respect to the measure $\piPush\Volume$.
\end{proof}

\begin{proposition}
\label{prop:dFh-qpar}
\beq
\dashint_\S |dF_h - \qpar_h|^2\,\VolumeS = O(h^2).
\label{eq:tmp1}
\eeq
\end{proposition}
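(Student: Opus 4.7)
The plan is to express $dF_h$ as a fiber average of $df_h$ via a horizontal lift, and then invoke \eqref{eq:rig6} together with Lemma~\ref{lm:fpi}. For a vector field $V$ on $\S$, take as its lift to $\W_h$ the horizontal vector field $\tilde V := \sigma(\pi^* V)$ obtained from the splitting $\sigma\oplus\iota : \pi^*T\S \oplus \pi^*\NS \to T\NS$ used in Section~3. Two properties are crucial: (i) $d\pi\circ\sigma = \id$, so $\tilde V$ is $\pi$-related to $V$; and (ii) $\sigma_\xi(V) = \dot\beta(0)$ where $\beta$ is a parallel normal field with $\beta(0) = \xi$, and parallel transport in $\NS$ is norm-preserving, so $\tilde V$ is tangent to $\partial \W_h = \{|\xi|=h\}$.

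Apply the Lie derivative $L_V$ to the defining identity $F_h\,\piPush \Volume = \piPush(f_h\,\Volume)$. Since both sides are top forms on $\S$ and $\tilde V$ is $\pi$-related to $V$, one has $\iota_V \piPush(\cdot) = \piPush \iota_{\tilde V}(\cdot)$; applying Cartan's formula $L = d\iota + \iota d$ and the push-forward Stokes theorem, while noting that $\iota_{\tilde V}\Volume|_{\partial \W_h} = 0$ because $\tilde V$ is tangent to the boundary, one obtains after dividing through by $\piPush\Volume$
\[
VF_h = \upi\bigl(\tilde V f_h\bigr) + \upi\bigl((f_h - F_h\circ\pi)\,\operatorname{div}_{\Volume} \tilde V\bigr).
\]
Now rewrite $\tilde V f_h = df_h\circ\sigma(V)$ using Lemma~\ref{lm:siPi} as $df_h\circ\Pi(V,0)$ plus an error bounded by $C|\lambda|\,|df_h|$, and note that $\pi^*\qo_h(V,0) = \pi^*\qpar_h(V)$. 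Subtracting $\qpar_h(V)$ yields
\[
VF_h - \qpar_h(V) = \upi\bigl((df_h\circ\Pi - \pi^*\qo_h)(V,0)\bigr) + \upi\bigl(O(h)\,|df_h|\bigr) + \upi\bigl((f_h - F_h\circ\pi)\,\operatorname{div}_{\Volume}\tilde V\bigr),
\]
with $\|\operatorname{div}_\Volume \tilde V\|_\infty \le C$ uniformly in $h$.

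By Jensen's inequality, summing over a local orthonormal frame on $T\S$ and integrating against $\VolumeS$ (exchanging $\VolumeS$ for $\piPush\Volume/|\W_h|\cdot|\S|$ via Lemma~\ref{lm:cm} at cost $O(h)$), the pointwise square is dominated by a constant multiple of
\[
\dashintW |df_h\circ\Pi - \pi^*\qo_h|^2\,\Volume + h^2\,\dashintW |df_h|^2\,\Volume + \dashintW |f_h - F_h\circ\pi|^2\,\Volume.
\]
The first term is $O(h^2)$ by \eqref{eq:rig6}, the second is $O(h^2)$ by Corollary~\ref{cy:df_bdd}, and the third is $O(h^2)$ by Lemma~\ref{lm:fpi} combined with Corollary~\ref{cy:df_bdd}, giving \eqref{eq:tmp1}. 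The main obstacle is justifying the differentiation formula in the middle display above: the commutation of $L_V$ with $\piPush$ hinges on the two properties of $\tilde V$ noted at the outset (being $\pi$-related to $V$ and tangent to $\partial\W_h$), and writing the push-forward Stokes computation carefully so that the boundary contribution drops.
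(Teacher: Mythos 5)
Your proof is correct and reaches the same estimate, but via a genuinely different mechanism than the paper's. You differentiate the exact identity $F_h\,\piPush\Volume = \piPush(f_h\,\Volume)$ in the direction of a horizontal lift $\tilde V = \sigma(\pi^*V)$, invoking Cartan's formula, the commutation $\iota_V\piPush = \piPush\iota_{\tilde V}$ for $\pi$-related lifts, and the push-forward Stokes theorem, with the boundary contribution killed because $\tilde V$ is tangent to $\partial\W_h$ (parallel transport in $\NS$ is norm-preserving). The paper instead first replaces $\upi$ by the simpler averaging $\frac{1}{\nu_k h^k}\pi_\star(f_h\,\omega)$ at the cost of an $O(h)$ correction, differentiates directly using $dF_h = \frac{1}{\nu_k h^k}[\pi_\star(df_h\wedge\omega) + \pi_\star(f_h\,d\omega) + (\pi|_{\partial\W_h})_\star(f_h\omega)]$, kills the boundary term via Lemma~\ref{lm:om}, and controls the $f_h\,d\omega$ contribution through a decomposition $d\omega = \alpha\wedge\omega + \beta$ with $\pi_\star\beta = 0$. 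The two routes use the same three estimates at the end (\eqref{eq:rig6}, Lemma~\ref{lm:fpi}, and Corollary~\ref{cy:df_bdd}) and both invoke Lemma~\ref{lm:siPi} to pass from $\sigma$ to $\Pi$. Your approach has the advantage of working directly with the exact operator $\upi$ and handling the extra term via $\operatorname{div}_{\Volume}\tilde V$ rather than via the decomposition of $d\omega$; it sidesteps the ``$\pmod{O(h)}$'' bookkeeping the paper carries through the differentiation step. The commutation formula you flagged as needing care does hold: the freedom in the choice of lift is a fiber-tangent vector field $W$, and $\piPush(\iota_W\,\cdot)$ vanishes because contracting with a fiber direction drops the fiber degree below $k$, so the result is well-defined and independent of the lift; the boundary term then drops exactly as you argue.
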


\begin{proof}
Using Corollary~\ref{cy:h2}, Lemma~\ref{lm:etvo} and the fiberwise integrability of $f_h$,
\[
F_h = \upi(f_h) = \frac{\pi_\star( f_h \pi^\star \VolumeS \wedge \omega)}{\pi_\star(\pi^\star\VolumeS \wedge \omega)}  = \frac{\pi_\star (f_h \omega)}{\pi_\star \omega} = \frac{1}{\nu_k h^k} \pi_\star(f_h \omega) \pmod{O(h)}.
\]
So,
\beq\label{eq:dFh}
dF_h = \frac{1}{\nu_k h^k}\Brk{ \pi_\star(df_h \wedge \omega) + \pi_\star(f_h \wedge d\omega) + \left(\pi|_{\partial\W_h}\right)_\star(f_h \omega)}\pmod{O(h)}.
\eeq
By Lemma~\ref{lm:om} we have
\beq\label{eq:pWh}
\left(\pi|_{\partial\W_h}\right)_\star(f_h \omega) = 0.
\eeq
Similarly, using also the fact that $\pi_\star \omega$ is constant,
\[
\pi_\star(d\omega) = d(\pi_\star \omega)  +\left( \pi|_{\partial\W_h}\right)_{\star}\omega = 0.
\]
So,
\[
\pi_\star(f_h \wedge d\omega) = \pi_\star\left((f_h - F_h \circ \pi) d\omega\right),
\]
and
\[
dF_h - \qpar_h = \frac{1}{\nu_k h^k}\BRK{ \pi_\star\left(\left(df_h - \pi^\star \qpar_h\right)\wedge \omega\right) + \pi_\star\left((f_h - F_h \circ \pi) d\omega\right)}\pmod{O(h)}.
\]
Write $d\omega = \alpha \wedge \omega + \beta$ where $\alpha$ is a $1$-form and $\pi_\star \beta = 0.$ Choose $C$ such that $|\alpha|^2 \leq C.$ By \eqref{eq:dpisi}, $\pi$ is a Riemannian submersion with respect to $\tilde \go.$ So, by Corollary~\ref{cy:tgg}, we have
\[
|dF_h - \qpar_h| = \frac{1+O(h)}{\nu_k h^k}\BRK{ \pi_\star\left(\left|df_h - \pi^\star \qpar_h\right|\omega\right) + \sqrt{C} \pi_\star\left(|f_h - F_h \circ \pi| \omega\right)}\pmod{O(h)}.
\]
By Jensen's inequality and the Cauchy-Schwarz inequality,
\[
|dF_h - \qpar_h|^2  \leq \frac{3 + O(h)}{\nu_k h^k}\BRK{ \pi_\star\left(\left|df_h - \pi^\star \qpar_h\right|^2 \omega\right) + C \pi_\star\left(|f_h - F_h \circ \pi|^2 \omega\right)}\pmod{O(h^2)}.
\]
By Lemma~\ref{lm:pisrho} and equations~\eqref{eq:tris}, we have
\[
\pi^\star \qpar_h \circ (\sigma \oplus \iota) = (\sigma\oplus \iota)^* \circ \rho \circ \pi^* \qpar_h = \pi^* \qpar_h.
\]
So, by Lemma~\ref{lm:siPi}
\begin{align*}
\left|df_h - \pi^\star \qpar_h\right|^2 &= \left| df_h \circ \Pi - \pi^\star\qpar_h \circ \Pi \right|^2 \\
&\leq 2 \left|df_h \circ \Pi - \pi^\star \q_h \circ (\sigma \oplus \iota )\right|^2 + O(h^2)\\
&= 2 |df_h \circ \Pi - \pi^* \q_h|^2 + O(h^2).
\end{align*}
So, integrating and using Lemma~\ref{lm:cm} and Corollary~\ref{cy:h2} we obtain
\[
\begin{split}
\int_\S|dF_h - \qpar_h|^2 \frac{\VolumeS}{|\S|}
&\leq \frac{3 + O(h)}{\nu_k h^k |\S|}\int_{\W_h}\left(2\left|df_h \circ \Pi - \pi^* \q_h\right|^2 + C |f_h - F_h \circ \pi|^2\right)\eta\wedge \omega \\
&\hspace{-2cm}= (3 + O(h))\dashint_{\W_h}\left(2\left|df_h \circ \Pi - \pi^* \q_h\right|^2 + C |f_h - F_h \circ \pi|^2\right) \Volume \quad \pmod{O(h^2)}.
\end{split}
\]
The first term on the right hand side is $O(h^2)$ by \eqref{eq:rig6}. The second term is $O(h^2)$ by Lemma~\ref{lm:fpi} and~\eqref{eq:df_bdd}.

\end{proof}

\begin{corollary}\label{cy:boring}
\[
\limH\,\, \dashint_\S |dF_h - \qpar|^2\,\VolumeS = 0.
\]
\end{corollary}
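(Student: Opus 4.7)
The plan is a straightforward triangle inequality argument combining the two main inputs already available. By Proposition~\ref{prop:dFh-qpar}, the average of $|dF_h - \qpar_h|^2$ on $\S$ is $O(h^2)$, so it suffices to show that the average of $|\qpar_h - \qpar|^2$ on $\S$ tends to zero as $h\to0$. This is precisely a consequence of the strong $L^2$-convergence $\qo_h \to \qo$ on $\S$ established in Proposition~\ref{pr:wcqh}.

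More concretely, I would start by writing
\[
|dF_h - \qpar|^2 \le 2|dF_h - \qpar_h|^2 + 2|\qpar_h - \qpar|^2
\]
and integrate both sides against $\VolumeS/|\S|$. The first term on the right hand side is $O(h^2)$ by Proposition~\ref{prop:dFh-qpar}. For the second term, observe that $\qpar_h - \qpar = (\qo_h - \qo)\circ\ipar$, and since $\ipar$ is a pointwise isometric inclusion, $|\qpar_h - \qpar| \le |\qo_h - \qo|$ pointwise on $\S$. Hence
\[
\dashint_\S |\qpar_h - \qpar|^2\,\VolumeS \le \dashint_\S |\qo_h - \qo|^2\,\VolumeS,
\]
and the right hand side tends to zero as $h\to0$ by the strong $L^2$-convergence $\qo_h \to \qo$ stated in Proposition~\ref{pr:wcqh}. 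Combining the two estimates yields the claim.

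There is essentially no obstacle here: the proof is a one-line consequence of Proposition~\ref{prop:dFh-qpar} together with the strong $L^2$-convergence half of Proposition~\ref{pr:wcqh}. The only point that deserves mention is the mild check that strong $L^2$-convergence of $\qo_h$ passes to its tangential component $\qpar_h$, which is immediate from the boundedness of $\ipar$. No further compactness or regularity arguments are needed.
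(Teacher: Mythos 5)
Your proof is correct and is essentially the same argument the paper uses: the paper's proof cites exactly Proposition~\ref{prop:dFh-qpar} and the strong $L^2$-convergence from Proposition~\ref{pr:wcqh}, leaving the triangle inequality and the passage from $\qo_h$ to its tangential component $\qpar_h$ implicit, which you have simply spelled out. No discrepancy to report.
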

\begin{proof}
The corollary follows from Proposition~\ref{prop:dFh-qpar} and the strong convergence of $\qo_h$ to $\qo$, Proposition~\ref{pr:wcqh}.
\end{proof}

\begin{lemma}\label{lm:FL2}
$F_h$ is uniformly bounded in $W^{1,2}(S,\R^n)$.
\end{lemma}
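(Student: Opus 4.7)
The plan is to deduce the $W^{1,2}$ bound from three ingredients already established: the $L^2$ bound on $\qo_h$ from rigidity, the approximation of $dF_h$ by $\qpar_h$ from Proposition~\ref{prop:dFh-qpar}, and the mean-zero-in-the-limit property \eqref{eq:Fmean0} combined with the Poincar\'e inequality on $\S$.

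First I would bound $dF_h$ in $L^2(\S)$. By \eqref{eq:rig7} together with Lemma~\ref{lm:cm} (which says $\piPush\Volume/|\W_h|$ converges uniformly to $\VolumeS/|\S|$), we have
\[
\dashint_\S |\qo_h|^2\,\VolumeS = O(1),
\]
so in particular $\qpar_h = \qo_h\circ\ipar$ is bounded in $L^2(\S)$. Combining this with Proposition~\ref{prop:dFh-qpar} via the triangle inequality,
\[
\left(\dashint_\S |dF_h|^2 \VolumeS\right)^{1/2} \le \left(\dashint_\S |dF_h - \qpar_h|^2 \VolumeS\right)^{1/2} + \left(\dashint_\S |\qpar_h|^2 \VolumeS\right)^{1/2} = O(1).
\]

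Next I would bound $F_h$ in $L^2(\S)$. By \eqref{eq:Fmean0} the mean value $\dashint_\S F_h\,\VolumeS$ is bounded (it tends to $0$). Since $\S$ is bounded and either closed or has Lipschitz boundary, the Poincar\'e inequality applies componentwise to $F_h$, yielding
\[
\left\| F_h - \dashint_\S F_h\,\VolumeS \right\|_{L^2(\S)} \le C\,\|dF_h\|_{L^2(\S)},
\]
and the triangle inequality then gives $\|F_h\|_{L^2(\S)} = O(1)$.

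There is no real obstacle here; the only minor point to verify is that the hypotheses on $\S$ stated in Section~\ref{sec:mr} (boundedness, Lipschitz or closed boundary) are enough to invoke the Poincar\'e inequality, which they are.
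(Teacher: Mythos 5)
Your proof is correct and follows essentially the same route as the paper: bound $dF_h$ in $L^2$ by approximating it with $\qpar_h$ via Proposition~\ref{prop:dFh-qpar}, then use the Poincar\'e inequality together with \eqref{eq:Fmean0} to get the $L^2$ bound on $F_h$. The only cosmetic difference is that the paper invokes Corollary~\ref{cy:boring} (strong $L^2$ convergence of $dF_h$ to $\qpar$) to bound $dF_h$, whereas you go directly from \eqref{eq:rig7} and the $L^2$ bound on $\qo_h$; this is a slightly more elementary use of the same ingredients.
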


\begin{proof}
By Corollary~\ref{cy:boring}, $dF_h$ is uniformly bounded in $L^2(\S;\R^n)$. So, by the Poincar\'e inequality and Eq.~\eqref{eq:Fmean0}, $F_h$ is also uniformly bounded in $L^2(\S;\R^n)$.
\end{proof}

\begin{theorem}\label{tm:FW22}
The sequence $F_h$ has a subsequence that converges to $F\in W^{2,2}(\S;\R^n)$,
\[
F_h\to F\qquad\text{in $W^{1,2}(\S;\R^n)$}.
\]
Moreover, $dF = \qpar$.
\end{theorem}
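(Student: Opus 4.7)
My plan is to combine the uniform $W^{1,2}$ bound on $F_h$ established in Lemma~\ref{lm:FL2} with the strong $L^2$ convergence $dF_h \to \qpar$ from Corollary~\ref{cy:boring}, and then use the $W^{1,2}$ regularity of the weak limit $\qo$ from Proposition~\ref{pr:wcqh} to obtain the extra regularity $F \in W^{2,2}$.

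First I would apply Rellich--Kondrachov compactness: since $F_h$ is uniformly bounded in $W^{1,2}(\S;\R^n)$, after extracting a subsequence (not relabeled) there exists $F \in W^{1,2}(\S;\R^n)$ with $F_h \weakly F$ weakly in $W^{1,2}(\S;\R^n)$ and $F_h \to F$ strongly in $L^2(\S;\R^n)$. Weak $W^{1,2}$ convergence yields $dF_h \weakly dF$ weakly in $L^2(\S; T^*\S\otimes\R^n)$, while Corollary~\ref{cy:boring} gives $dF_h \to \qpar$ strongly (and therefore weakly) in the same space. By uniqueness of weak limits, $dF = \qpar$ almost everywhere on $\S$.

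Next I would upgrade regularity. Proposition~\ref{pr:wcqh} gives $\qo \in W^{1,2}(\S; T^*\calM|_\S \otimes \R^n)$, hence $\qpar = \qo \circ \ipar \in W^{1,2}(\S; T^*\S \otimes \R^n)$. Since $dF = \qpar$ now has a weak covariant derivative in $L^2(\S)$, we conclude $F \in W^{2,2}(\S;\R^n)$. Finally, the strong $L^2$ convergence $F_h \to F$ from Rellich together with the strong $L^2$ convergence $dF_h \to dF = \qpar$ from Corollary~\ref{cy:boring} upgrades the convergence to strong $W^{1,2}(\S;\R^n)$.

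I do not anticipate a substantive obstacle in this argument, since the hard analytic content---the rigidity estimate, the metric expansion near $\S$, and the identification between $dF_h$ and $\qpar_h$---has already been absorbed into the results of Section~\ref{sec:rigidity} and Proposition~\ref{prop:dFh-qpar}. The remaining work is the standard functional-analytic identification of the weak $L^2$ limit of $dF_h$ simultaneously with the distributional derivative of the weak $W^{1,2}$ limit of $F_h$ and with the strong $L^2$ limit $\qpar$.
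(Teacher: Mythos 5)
Your proposal is correct and follows essentially the same route as the paper: Lemma~\ref{lm:FL2} and compactness give a strongly $L^2$-convergent subsequence $F_h \to F$, Corollary~\ref{cy:boring} gives strong $L^2$ convergence $dF_h \to \qpar$, the two are combined to get strong $W^{1,2}$ convergence and the identification $dF = \qpar$, and the regularity $\qpar = \qo\circ\ipar \in W^{1,2}$ from Proposition~\ref{pr:wcqh} upgrades $F$ to $W^{2,2}$. The only difference is cosmetic: you make the weak-limit identification explicit, while the paper phrases it more compactly.
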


\begin{proof}
By Lemma~\ref{lm:FL2} and the Sobolev embedding theorem, after passing to a subsequence, $F_h$  converges in $L^2(\S;\R^n)$ to a limit $F$.
By Corollary~\ref{cy:boring}, $dF_h$ converges strongly in $L^2(\S;\R^n)$ to $\qpar$. So, we conclude that $F_h$ converges strongly in $W^{1,2}(\S;\R)$. Since the limit must still be $F$, we conclude that $dF = \qpar$. Finally, since  $\qpar\in W^{1,2}(\S;T^*\S\otimes\R^n)$, it follows that $F \in W^{2,2}(\S;\R)$.
\end{proof}

\begin{corollary}
\label{cor:compact}
\[
\begin{gathered}
\limH\,\, \dashintW |f_h - F\circ\pi|^2\,\Volume = 0 \\
\limH\,\, \dashintW |df_h\circ\Pi - \pi^*(dF\oplus\qperp)|^2\,\Volume = 0.
\end{gathered}
\]
\end{corollary}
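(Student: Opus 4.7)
The plan is to reduce both limits to facts already proved via a triangle inequality decomposition, then push forward the integrals from $\W_h$ to $\S$ and invoke the strong convergence results for $F_h$ and $\qo_h$.

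For the first limit, I would write
\[
\dashintW |f_h - F\circ\pi|^2\,\Volume \le 2\dashintW |f_h - F_h\circ\pi|^2\,\Volume + 2\dashintW |(F_h - F)\circ\pi|^2\,\Volume.
\]
The first term on the right is $O(h^2)$ by Lemma~\ref{lm:fpi} combined with the uniform bound $\dashintW |df_h|^2\,\Volume = O(1)$ from Corollary~\ref{cy:df_bdd}. For the second term, I would use property~\eqref{eq:push1b} of the push-forward operator to rewrite it as
\[
\dashintW |(F_h - F)\circ\pi|^2\,\Volume = \frac{1}{|\W_h|}\int_\S |F_h - F|^2\,\piPush\Volume = \int_\S |F_h - F|^2\,\frac{\piPush\Volume}{|\W_h|},
\]
and then apply the strong convergence $F_h \to F$ in $L^2(\S;\R^n)$ from Theorem~\ref{tm:FW22}, together with the uniform equivalence of measures $\piPush\Volume/|\W_h| \to \VolumeS/|\S|$ from Lemma~\ref{lm:cm}.

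For the second limit, the approach is parallel. I would decompose
\[
\dashintW |df_h\circ\Pi - \pi^*(dF\oplus\qperp)|^2\,\Volume \le 2\dashintW |df_h\circ\Pi - \pi^*\qo_h|^2\,\Volume + 2\dashintW |\pi^*(\qo_h - \qo)|^2\,\Volume,
\]
using that $\qo = dF\oplus\qperp$ because $dF = \qpar$ (Theorem~\ref{tm:FW22}). The first term is $O(h^2)$ by~\eqref{eq:rig6}. For the second term, the same push-forward manipulation gives
\[
\dashintW |\pi^*(\qo_h - \qo)|^2\,\Volume = \int_\S |\qo_h - \qo|^2\,\frac{\piPush\Volume}{|\W_h|},
\]
which tends to zero by the strong $L^2$-convergence $\qo_h \to \qo$ from Proposition~\ref{pr:wcqh} and again Lemma~\ref{lm:cm}.

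There is really no main obstacle here: every ingredient has been set up earlier. The only thing to double-check is that the push-forward identity $\dashintW (\phi\circ\pi)\,\Volume = \int_\S \phi\,(\piPush\Volume)/|\W_h|$ is being applied to nonnegative scalar integrands (namely $|F_h - F|^2$ and $|\qo_h - \qo|^2$), which it is, so~\eqref{eq:push1b} applies directly. The result then follows immediately by combining the two estimates.
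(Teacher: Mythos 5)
Your proposal is correct and follows essentially the same route as the paper: the first limit is handled by exactly the same triangle inequality via $F_h$, Lemma~\ref{lm:fpi}, Corollary~\ref{cy:df_bdd}, and Theorem~\ref{tm:FW22}; and your decomposition for the second limit simply inlines Corollary~\ref{cor:4.1} (which is itself proved from \eqref{eq:rig6} and Proposition~\ref{pr:wcqh}) together with the identity $dF=\qpar$ from Theorem~\ref{tm:FW22}. No gap.
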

\begin{proof}
The result follows from Lemma~\ref{lm:fpi}, Corollary~\ref{cy:df_bdd}, Corollary~\ref{cor:4.1} and Theorem~\ref{tm:FW22}.
\end{proof}

To conclude, we have shown that $f_h$  has a subsequence that \rr-converges to a limit $(F,\qperp)$.

\section{Recovery sequence}
\label{sec:recovery}

Let
\[
\scrX = W^{2,2}(\S;\R^n)\times W^{1,2}(\S;\NS^*\otimes\R^n).
\]
In this section we show that for every pair $(F,\qperp)\in\scrX$ there exists a so-called recovery sequence $f_h\in W^{1,2}(\W_h;\R^n)$ that \rr-converges to $(F,\qperp)$, such that
\[
\limH \Eh[f_h] = \Elim[F,\qperp],
\]
where $\Elim$ is given by \eqref{eq:Elim}.

Let $(F,\qperp)\in \scrX$. We construct a recovery sequence $f_h\in W^{1,2}(\W_h;\R^n)$ as follows,
\beq
f_h = F\circ\pi + \pi^*\qperp \circ \lambda,
\label{eq:rec_seq}
\eeq
or in explicit form, for $\xi \in \NS$,
\[
f_h(\xi) = F(\pi(\xi)) + \qperp_{\pi(\xi)}(\xi).
\]
\begin{proposition}
\label{prop:drecovery}
The derivative of the recovery sequence satisfies
\beq
\begin{aligned}
df_h\circ\sigma &= \pi^* dF + \pi^*(\nabla\qperp)\circ \lambda  \\
df_h\circ\iota &= \pi^* \qperp,
\label{eq:df_h}
\end{aligned}
\eeq
or in explicit form, for $\xi\in\NS$ and $X = X^\parallel \oplus X^\perp \in (\pi^* T\S \oplus \pi^* \NS)_\xi \simeq (T\S \oplus \NS)_{\pi(\xi)}$,
\[
[df_h\circ(\sigma\oplus\iota)]_\xi(X^\parallel \oplus X^\perp) =
dF_{\pi(\xi)}(X^\parallel) + \qperp_{\pi(\xi)}(X^\perp) +
(\nabla_{X^\parallel}\qperp)_{\pi(\xi)}(\xi).
\]

\end{proposition}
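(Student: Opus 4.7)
The plan is to verify the two components of the claim separately, corresponding to the splitting $T\NS \cong \pi^*T\S \oplus \pi^*\NS$ given by $\sigma \oplus \iota$ from Section~3.2. In each case I will differentiate the two summands of the ansatz $f_h = F\circ\pi + \pi^*\qperp\circ\lambda$ via the Leibniz rule, evaluating along a curve that explicitly realizes the tangent vector in question.

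\emph{Vertical direction.} For $df_h\circ\iota$: by~\eqref{eq:dpiio} we have $d\pi\circ\iota = 0$, so the term $F\circ\pi$ contributes nothing. For the second term, represent $\iota_\xi(\eta)$ by the curve $\gamma(t) = \xi + t\eta$ from the definition in Section~2. Since $\pi(\gamma(t)) = \pi(\xi)$ is independent of $t$, we have
\[
f_h(\gamma(t)) = F(\pi(\xi)) + \qperp_{\pi(\xi)}(\xi + t\eta),
\]
an affine function of $t$, and differentiating at $t=0$ yields $\qperp_{\pi(\xi)}(\eta) = (\pi^*\qperp)_\xi(\eta)$.

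\emph{Horizontal direction.} For $df_h\circ\sigma$, fix $\xi\in\NS$, $\eta\in T_{\pi(\xi)}\S$, and realize $\sigma_\xi(\eta)$ by its defining data: a curve $\alpha:I\to\S$ with $\dot\alpha(0)=\eta$ together with a $\nabla$-parallel normal field $\beta$ along $\alpha$ with $\beta(0)=\xi$, so that $\sigma_\xi(\eta)=\dot\beta(0)$. Then
\[
df_h(\sigma_\xi(\eta)) = \left.\frac{d}{dt}\right|_{t=0}\bigl[F(\alpha(t)) + \qperp_{\alpha(t)}(\beta(t))\bigr].
\]
The first summand yields $dF_{\pi(\xi)}(\eta) = (\pi^*dF)_\xi(\eta)$. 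For the second summand, the Leibniz rule for the pairing between $\NS^*\otimes\R^n$ and $\NS$ gives
\[
\left.\frac{d}{dt}\right|_{t=0}\qperp_{\alpha(t)}(\beta(t)) = (\nabla_\eta\qperp)(\xi) + \qperp_{\pi(\xi)}\bigl(\nabla_{\dot\alpha}\beta|_{t=0}\bigr),
\]
and since $\beta$ is parallel the second term vanishes. The surviving term is precisely $[\pi^*(\nabla\qperp)\circ\lambda]_\xi(\eta)$, as claimed.

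There is no substantive obstacle: the computation is essentially a Leibniz-rule unpacking of the bilinear pairing $\qperp(\lambda)$ combined with the two defining properties of $\iota$ and $\sigma$---radial translation within a fiber and parallel transport of normal fields along the base, respectively. The only point worth watching is that the parallel-transport clause in the definition of $\sigma$ is precisely what kills the $\qperp(\nabla\beta)$ term and leaves behind exactly the covariant derivative $\nabla\qperp$.
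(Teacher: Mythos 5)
Your proof is correct and follows essentially the same route as the paper's: both compute $df_h\circ\iota$ and $df_h\circ\sigma$ by representing the respective tangent vectors via the defining curves (fiberwise affine curve for $\iota$, parallel normal field along a base curve for $\sigma$), and both invoke the Leibniz rule plus the parallelism of the normal field to kill the extraneous term and isolate $\nabla\qperp$.
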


\begin{proof}
Let $\xi\in\NS$ and $\eta\in(\NS)_{\pi(\xi)}$. Recall that
\[
\iota_\xi(\eta) = \left.\frac{d}{dt}(\xi + t\,\eta)\right|_{t=0},
\]
hence,
\[
\begin{split}
(df_h\circ\iota)_\xi(\eta) &= \left.\frac{d}{dt} f_h(\xi + t\,\eta) \right|_{t=0} \\
&= \left.\frac{d}{dt} \Brk{F(\pi(\xi)) + \qperp_{\pi(\xi)}(\xi + t\,\eta)} \right|_{t=0} \\
&=\qperp_{\pi(\xi)}(\eta),
\end{split}
\]
or in compact notation,
\[
df_h\circ\iota = \pi^* \qperp.
\]

Let then $\alpha:I\to\S$, such that $X = \dot{\alpha}(0) \in(T\S)_{\pi(\xi)}$. Recall that $\sigma_\xi(X) = \dot{\gamma}(0)$, where $\gamma:I\to\NS$  is any parallel normal field along $\alpha$. Then
\[
\begin{split}
(df_h\circ\sigma)_\xi(X)
&= \left.\frac{d}{dt} \Brk{F(\alpha(t)) + \qperp_{\alpha(t)}(\gamma(t))} \right|_{t=0} \\
&= dF_{\pi(\xi)}(X) + (\nabla_X\qperp)_{\pi(\xi)}(\xi)  + \qperp_{\pi(\xi)}\left(\left.\frac{D\gamma}{dt} \right|_{t=0}\right).
\end{split}
\]
The last term on the right hand side vanishes because $\gamma$ is parallel. So, in compact notation,
\[
df_h\circ\sigma = \pi^* dF + \pi^*(\nabla\qperp)\circ \lambda.
\]
\end{proof}

\begin{proposition}
$f_h$ \rr-converges to $(F,\qperp)$, namely,
\[
\begin{gathered}
\limH\,\, \dashintW |f_h  - F\circ\pi|^2\,\Volume = 0 \\
\limH\,\, \dashintW |df_h\circ\Pi - \pi^*(dF\oplus\qperp)|^2\,\Volume = 0.
\end{gathered}
\]
\end{proposition}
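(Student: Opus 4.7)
The plan is to verify both r-convergence conditions by direct computation, using the explicit formula \eqref{eq:rec_seq} for $f_h$ and Proposition~\ref{prop:drecovery} for $df_h$, together with the estimate $|\lambda| \le h$ on $\W_h$ and the comparison between $\sigma\oplus\iota$ and $\Pi$ provided by Lemma~\ref{lm:siPi}.

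For the first limit, the definition of $f_h$ gives $f_h - F\circ\pi = \pi^*\qperp\circ\lambda$ pointwise. Since $|\lambda_\xi| = |\xi| \le h$ for $\xi\in\W_h$, we obtain the pointwise bound $|f_h - F\circ\pi|^2 \le h^2 |\pi^*\qperp|^2$. Integrating and using Lemma~\ref{lm:cm} (which identifies the $h\to 0$ behavior of the averaged push-forward measure with $\VolumeS/|\S|$) together with the fact that $\qperp \in L^2(\S;\NS^*\otimes\R^n)$, the averaged integral is $O(h^2)$, which goes to zero.

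For the second limit, I would use the splitting $\Pi = (\sigma\oplus\iota) - [(\sigma\oplus\iota) - \Pi]$ together with Proposition~\ref{prop:drecovery}, which states that $df_h\circ(\sigma\oplus\iota) = \pi^*(dF\oplus\qperp) + \pi^*(\nabla\qperp)\circ\lambda$ (with the last term understood as acting only on the $\pi^*T\S$ summand). Thus
\[
df_h\circ\Pi - \pi^*(dF\oplus\qperp) = \pi^*(\nabla\qperp)\circ\lambda - df_h\circ[(\sigma\oplus\iota) - \Pi].
\]
The first term on the right is controlled in $L^\infty$ norm by $h\,|\pi^*\nabla\qperp|$, so its squared $L^2$-average over $\W_h$ is $O(h^2)\|\nabla\qperp\|_{L^2(\S)}^2$, which vanishes in the limit. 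For the second term, Lemma~\ref{lm:siPi} gives $|(\sigma\oplus\iota) - \Pi| = O(|\lambda|) = O(h)$ pointwise, so upon composition with $df_h$ (whose norm on $\W_h$ we bound using Proposition~\ref{prop:drecovery} by $|\pi^*dF| + |\pi^*\qperp| + h|\pi^*\nabla\qperp|$, a uniformly $L^2$-integrable quantity) we again obtain an $O(h)$ contribution in $L^2$-norm.

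Combining these two estimates via the triangle inequality and Cauchy–Schwarz gives the desired vanishing of the averaged integral. The main technical point, though routine, is to convert the push-forward averages on $\W_h$ to averages on $\S$ against $\VolumeS$ using Lemma~\ref{lm:cm} (or equivalently Lemma~\ref{lm:resc}), so that the $L^2$-norms of $dF$, $\qperp$, and $\nabla\qperp$ on $\S$ can be invoked; all of these are finite by the assumption $(F,\qperp)\in\scrX$. I do not anticipate any substantive obstacle since both terms are manifestly $O(h)$ in the relevant norm.
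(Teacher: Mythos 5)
Your proof follows the same approach as the paper: bound $f_h - F\circ\pi$ pointwise by $h\,|\pi^*\qperp|$ for the first limit, then split $df_h\circ\Pi - \pi^*(dF\oplus\qperp)$ through $\sigma\oplus\iota$ and control the two resulting pieces with the estimates $|\lambda|\le h$ and Lemma~\ref{lm:siPi}. Your justification of the $L^2$-boundedness of $df_h$ directly from Proposition~\ref{prop:drecovery} is in fact slightly cleaner than the paper's, which cites \eqref{eq:df_bdd} (Corollary~\ref{cy:df_bdd}, stated for the sequence of approximate minimizers under the finite bending assumption rather than for the recovery sequence).
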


\begin{proof}
For every  $\xi\in\W_h$,
\[
(f_h - F\circ\pi)(\xi) = \qperp_{\pi(\xi)}(\xi) = O(h),
\]
hence
\[
\dashintW |f_h - F\circ\pi|^2\,\Volume = O(h^2).
\]

Next, by the Cauchy-Schwarz inequality,
\[
\begin{split}
|df_h\circ\Pi - \pi^*(dF\oplus\qperp)|^2 &\le 2\,|df_h\circ(\sigma\oplus\iota)  - \pi^*(dF\oplus\qperp)|^2  +
2\,|df_h\circ(\Pi - \sigma\oplus\iota) |^2  \\
&\le 2|\pi^*(\nabla\qperp)\circ \lambda|^2 + 2|df_h|^2\, |\Pi - \sigma\oplus\iota|^2.
\end{split}
\]
Since by Lemma~\ref{lm:siPi}, $|\Pi - \sigma\oplus\iota| = O(h)$ and $df_h$ is uniformly bounded, i.e., satisfies \eqref{eq:df_bdd}, it follows that
\[
\dashintW |df_h|^2\, |\Pi - \sigma\oplus\iota|^2\,\Volume = O(h^2).
\]
Finally, since uniformly for every $\xi\in\W_h$,
\[
|\pi^*(\nabla\qperp)\circ \lambda|(\xi) = |(\nabla\qperp)_{\pi(\xi)}(\xi)| = O(h),
\]
it follows that
\[
\dashintW |\pi^*(\nabla\qperp)\circ \lambda|^2\,\Volume = O(h^2).
\]
\end{proof}

\begin{lemma}
\label{lm:linearize_dist}
For all $A\in\GL{n}$,
\[
\Abs{\dist(\id  + A,\SO{n}) - \Abs{\frac{A + A ^T}{2}}} \le C\,\min(|A|,|A|^2).
\]
\end{lemma}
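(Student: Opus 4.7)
The plan is to split into two regimes based on the size of $|A|$. When $|A| \ge 1$, the asserted inequality is immediate from the triangle inequality: taking $\id \in \SO{n}$ as a competitor gives $\dist(\id + A, \SO{n}) \le |A|$, and clearly $|(A+A^T)/2| \le |A|$, so the left-hand side is bounded by $2|A|$, which matches $\min(|A|,|A|^2) = |A|$ in this regime (absorbing the factor into $C$).

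For $|A|$ small, I would invoke the polar decomposition. For $|A|$ small enough that $\det(\id + A) > 0$, write $\id + A = RU$ with $R \in \SO{n}$ and $U = \sqrt{(\id+A)^T(\id+A)}$ symmetric positive definite. A standard argument, minimizing $|Q - (\id+A)|_F^2$ over $Q \in \SO{n}$ using the Frobenius inner product, identifies $R$ as the nearest point of $\SO{n}$ to $\id + A$. Hence, using the orthogonal invariance of the Frobenius norm,
\[
\dist(\id + A, \SO{n}) = |(\id + A) - R| = |R(U - \id)| = |U - \id|.
\]

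It then remains to show $|U - \id| = |A_{\mathrm{sym}}| + O(|A|^2)$, where $A_{\mathrm{sym}} = (A+A^T)/2$. I would compute
\[
U^2 = (\id + A)^T(\id + A) = \id + 2 A_{\mathrm{sym}} + A^T A,
\]
and expand via the Taylor series of the matrix square root, $\sqrt{\id + X} = \id + X/2 + O(|X|^2)$ for $|X|$ small. Plugging in $X = 2A_{\mathrm{sym}} + A^T A$ (which has $|X| \le C |A|$ and is symmetric, so the expansion applies by diagonalization) yields $U = \id + A_{\mathrm{sym}} + O(|A|^2)$. The lemma then follows from the reverse triangle inequality
\[
\Bigl| |U - \id| - |A_{\mathrm{sym}}|\Bigr| \le |U - \id - A_{\mathrm{sym}}| = O(|A|^2).
\]

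No step presents a substantive obstacle: the polar decomposition identification of the nearest orthogonal matrix and the Taylor expansion of the matrix square root near the identity are both classical. The only thing to watch is choosing the threshold separating the two regimes (and matching constants) so that the bounds in each case combine into a single constant $C$ valid for all $A \in \GL{n}$.
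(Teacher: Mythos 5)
Your proposal is correct, and it takes essentially the same approach as the paper: the paper's proof is a two-sentence sketch stating that the $O(|A|)$ bound follows because both terms on the left are $O(|A|)$, and that the $O(|A|^2)$ bound follows because $|(A+A^T)/2|$ is the first-order Taylor expansion of $\dist(\id+A,\SO{n})$ at $A=0$. What you have done is supply the details the paper omits — in particular, the polar decomposition identification of the nearest point in $\SO{n}$ and the explicit Taylor expansion of the matrix square root — and then splice the two regimes together, which is precisely the content the paper's proof implicitly relies on.
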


\begin{proof}
The $O(|A|)$ bound follows from the fact that all the terms on the left hand side are $O(|A|)$. The $O(|A|^2)$ bound follows form the fact that $|A+A^T|/2$ is the first-order Taylor expansion of $\dist(I+A,\SO{n})$ at $A=0$.
\end{proof}

\begin{proposition}\label{pr:limEh}
\[
\limH \,\,\Eh[f_h] = \Elim[F,\qperp].
\]
\end{proposition}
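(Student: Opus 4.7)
The plan is to split into two cases based on whether $\qo := dF \oplus \qperp$ lies in $\SO{n}$ almost everywhere. If $\qo \notin \SO{n}$ on a set of positive measure, then $\Elim[F,\qperp] = \infty$; since the preceding proposition gives $df_h\circ\Pi \to \pi^*\qo$ in $L^2$, $\dashintW \dist^2(df_h,\SO{n})\Volume$ has a strictly positive limit, and dividing by $h^2$ yields $\Eh[f_h]\to\infty$. So I focus on the main case $\qo \in \SO{n}$ a.e.

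Combining Proposition~\ref{prop:drecovery} to expand $df_h\circ(\sigma\oplus\iota)$ with Lemma~\ref{lm:siPi} comparing $\sigma\oplus\iota$ and $\Pi$, I would first derive a pointwise expansion
\[
df_h\circ\Pi = \qo + A_h + O(|\lambda|^2), \qquad A_h(X^\parallel\oplus X^\perp) = (\nabla_{X^\parallel}\qperp)(\lambda) - \qpar\circ\SFF(\lambda,X^\parallel),
\]
with $|A_h| = O(h)$ uniformly. Since $\qo$ acts by fiberwise isometries and $\Pi$ preserves $\go$, one has $\dist(df_h,\SO{n}) = \dist(\id + B_h,\SO{T\calM|_\S})$ with $B_h = \qo^{-1}A_h + O(h^2)$. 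Lemma~\ref{lm:linearize_dist} then gives $\dist^2(df_h,\SO{n}) = \tfrac{1}{4}|B_h + B_h^T|^2 + O(h^3)$, whose remainder contributes $O(h)$ to $\Eh[f_h]$ and vanishes in the limit. In the block decomposition induced by $T\calM|_\S = T\S\oplus\NS$, only the $T\S$-column of $B_h$ is nonzero at leading order, with tangential part $M(\lambda,X^\parallel) := \Pparq\nabla_{X^\parallel}\qperp(\lambda) - \SFF(\lambda,X^\parallel)$ and normal part $N(\lambda,X^\parallel) := \Pperpq\nabla_{X^\parallel}\qperp(\lambda)$.

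The key algebraic step is to prove that $M(\xi,\cdot):T\S \to T\S$ is symmetric for each $\xi$. Symmetry of $\SFF(\xi,\cdot)$ is standard; for the other summand, differentiating $\qo^T\qo = \id$ along $X\in T\S$ yields
\[
\go(\Pparq\nabla_X\qperp(\xi),Y) = -\langle \nabla_X dF(Y),\qperp(\xi)\rangle,
\]
which is symmetric in $X,Y$ because $dF$ is a closed $\R^n$-valued $1$-form on $\S$ and the connection is torsion-free, so $\nabla dF$ is symmetric. This yields $|B_h + B_h^T|^2 = 4|M(\lambda,\cdot)|^2 + 2|N(\lambda,\cdot)|^2$, explaining the $2\!:\!1$ ratio in the definition \eqref{eq:Elim} of $\Elim$. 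Fiber integration then uses Corollaries~\ref{cy:siapg}--\ref{cy:h2} to reduce $\Volume$ at leading order to $\pi^*\VolumeS$ wedged with the Euclidean normal form, and the polar-coordinate identity $\int_{B_h^k}\xi^j\xi^l\,d\xi = \delta^{jl}\nu_k h^{k+2}/(k+2)$ converts each quadratic-in-$\lambda$ expression into its pointwise tensor norm on $\S$ times the explicit constant $\kappa$ in \eqref{eq:Elim}. Collecting all factors and using $|\W_h| \sim \nu_k h^k|\S|$ yields $\Eh[f_h] \to \Elim[F,\qperp]$.

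The main obstacle is the symmetry of $M$: without it, the tangential block would contribute $2|M|^2 + 2\tr(M^2)$ rather than $4|M|^2$ to $|B_h + B_h^T|^2$, and the limit would not match $\Elim$. The orthogonality of $\qo$ and the Hessian symmetry of $F$ enter precisely here; all remaining steps are bookkeeping of $O(h^2)$ error terms and explicit fiber integrals.
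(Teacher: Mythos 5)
Your proposal is correct and hinges on the same essential algebraic fact as the paper's proof, but it organizes the computation in a noticeably cleaner way. The paper works directly with $A = \pi^*(\nabla\qperp - dF\circ\SFF)\circ(\lambda\otimes\pi^*\Ppar)\circ\pi^*\qo^{-1}$ as a map $\R^n\to\R^n$, passes to index notation, expands $\tr(A^T A + A^2)$, and then manipulates the $\tr(A^2)$ term using the relation $\nabla_c\qo^\alpha_a = \nabla_a\qo^\alpha_c$ (symmetry of $\nabla dF$) together with orthogonality of $\qo$ to turn it into $|\Pparq\circ(\nabla\qperp - \qpar\circ\SFF)|^2$. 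You instead conjugate by $\qo$ to work with $B_h = \qo^{-1}A_h : T\calM|_\S\to T\calM|_\S$, where the block structure $\begin{pmatrix} M & 0 \\ N & 0\end{pmatrix}$ with $M:T\S\to T\S$ and $N:T\S\to\NS$ is transparent, and then observe that symmetry of $M$ (coming from the torsion-free Hessian $\nabla dF$ plus symmetry of $\SFF$) upgrades $\tfrac14|B_h+B_h^T|^2 = \tfrac12(|M|^2+|N|^2) + \tfrac12\tr(M^2)$ to $|M|^2 + \tfrac12|N|^2$. This is exactly the paper's $\tr(A^TA)$ and $\tr(A^2)$ split seen in a basis adapted to the splitting, and it makes the origin of the $2{:}1$ weighting in $\Elim$ immediately visible, which the paper's index manipulation leaves somewhat opaque. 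The fiber-integration and error bookkeeping (Lemma~\ref{lm:siPi}, Lemma~\ref{lm:linearize_dist}, the $\upi(\lambda\otimes\lambda)$ second-moment identity) are the same in both. One small remark: you should verify the value of $\kappa$ carefully against the paper's stated $\Elim$; the identity $\dashint_{B_h^k}\xi^j\xi^l\,d\xi = \delta^{jl}h^2/(k{+}2)$ produces the factor $1/(k{+}2)$ after normalization, which agrees with the value $\kappa = 1/3$ used for $k=1$ in Section~\ref{sec:examples}, whereas the verbal description of $\kappa$ in Section~\ref{sec:mr} appears to contain an extra sphere-volume factor.
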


\begin{proof}
Note first that by Lemma~\ref{lm:siPi} and Proposition~\ref{prop:drecovery},
\[
\begin{split}
df_h\circ\Pi &= df_h\circ(\sigma\oplus\iota) - df_h\circ\sigma\circ\pi^*\SFF\circ(\lambda\otimes\pi^*\Ppar) + |df_h|\, O(h^2) \\
&= \pi^*(dF\oplus\qperp) + \pi^*(\nabla\qperp - dF \circ \SFF)\circ(\lambda\otimes\pi^*\Ppar) \\
& + (|\nabla\qperp| + |df_h|)\, O(h^2) \\
\end{split}
\]
By the invariance of $\SO{n}$ under parallel transport,
\[
\begin{split}
\dist(df_h,\SO{n}) &=
\dist(\pi^*(dF\oplus\qperp) + \pi^*(\nabla\qperp - dF \circ \SFF)\circ(\lambda\otimes\pi^*\Ppar) ,\SO{n}) \\
&+ (|\nabla\qperp| + |df_h|)\,O(h^2).
\end{split}
\]
Hence, by the uniform $L^2$-boundedness of $df_h$,
\[
\limH \,\, \Eh[f_h] = \limH\,\, \frac{1}{h^2}\dashintW \dist^2(\pi^*(dF\oplus\qperp) + \pi^*(\nabla\qperp-dF \circ \SFF)\circ (\lambda\otimes\pi^*\Ppar),\SO{n}) \,\Volume.
\]

Note  that for $\xi\in\W_h$, $\pi^*(\nabla\qperp - dF \circ \SFF)\circ(\lambda\otimes\pi^*\Ppar)|_\xi = O(h)$, hence
\[
\begin{split}
\limH \,\, h^2 \Eh[f_h] &= \limH\,\, \dashintW \dist(\pi^*(dF\oplus\qperp),\SO{n}) \,\Volume \\
&=  \dashint_\S \dist(dF\oplus\qperp,\SO{n}) \,\VolumeS,
\end{split}
\]
and therefore if $dF\oplus\qperp\not\in\SO{n}$ a.e., then
\[
\limH \,\, \Eh[f_h] = \infty = \Elim[F,\qperp].
\]

It remains to consider the case $\qo = dF\oplus\qperp \in\SO{n}$. We have
\[
\begin{split}
& \dist(\pi^*\qo + \pi^*(\nabla\qperp - dF \circ \SFF)\circ(\lambda\otimes\Ppar),\SO{n}) \\\
&\,\,=
\dist(\id + \pi^*(\nabla\qperp - dF \circ \SFF)\circ(\lambda\otimes\Ppar)\circ\pi^*\qo^{-1},\SO{n}).
\end{split}
\]
It follows from
Lemma~\ref{lm:linearize_dist} that for $A\in\GL{n}$ with $|A| = O(h)$,
\[
\dist^2(\id + A,\SO{n}) = \Abs{\frac{A+A^T}{2}}^2 + O(h^3).
\]
Take
\beq
A = \pi^*(\nabla\qperp - dF \circ \SFF)\circ (\lambda\otimes\pi^*\Ppar) \circ\pi^*\qo^{-1},
\label{eq:A}
\eeq
or more explicitly, for $v \in \R^n$
\[
A_\xi v = (\nabla_{(\Pparq)_{\pi(\xi)}(v)}\qperp)_{\pi(\xi)}(\xi) - (dF \circ \SFF)_{\pi(\xi)}(\xi,(\Pparq)_{\pi(\xi)}(v)).
\]
Using the fact that $A_\xi = O(h)$, we obtain
\begin{align}
\limH \,\,\Eh[f_h] &= \limH\,\,\frac{1}{h^2} \dashintW \Abs{\frac{A+A^T}{2}}^2 \,\Volume \notag\\
&= \limH\,\,\frac{1}{2 h^2} \dashintW \tr(A^T A + A^2)\,\Volume. \label{eq:EhA}
\end{align}

At this point, it is helpful to introduce index notation to clarify the sense in which tensors with several upper and lower indices are composed and transposed. Let $p \in\S$, and let $X_1,\ldots,X_n$, be a basis for $T_p\calM$ such that $X_1,\ldots,X_m$ is a basis for $T_p\S$, and $X_{m+1},\ldots,X_{m+n}$, is a basis for $\NS_p$. To keep our notation concise, we use the following convention for ranges of summation:
\begin{align*}
&i,j,k, \qquad && \text{run from $1$ to $n$,} \\
&a,b,c, \qquad && \text{run from $1$ to $m$,} \\
&u,v,w, \qquad && \text{run from $m+1$ to $n$.}
\end{align*}
The orthogonality of $T_p\S$ and $\NS_p$ implies that $\go_{au}=0$.

Let $Y_1,\ldots,Y_n$, be a basis of $\R^n$. We reserve Greek letters for indices associated to the $Y's$. For $\xi \in \NS_p$ and $\eta \in \R^n$, write
\[
\xi = \xi^u X_u, \qquad \eta = \eta^\alpha Y_\alpha.
\]
So, for example,
\[
A_\xi \eta = \brk{\nabla_a \q_u^\alpha -  \qo^\alpha_c \SFF^c_{au}} (\q^{-1})^a_\beta \xi^u  \eta^\beta Y_\alpha.
\]

By equation~\eqref{eq:EhA} and Lemma~\ref{lm:cm} we get that
\[
\limH \Eh[f_h] =  \dashint_\S \calW \;\VolumeS,
\]
where
\[
\calW = \limH \left(\frac{1}{2 h^2} \upi(\tr(A^T A + A^2))\right).
\]
We now calculate $\calW$ explicitly.
Define the section $M$ of $\NS \otimes \NS$ by
\[
M = \limH \,\,\frac{1}{h^2} \upi (\lambda \otimes \lambda).
\]
At the point $p\in\S$, we have
\[
\begin{split}
\calW &= \half \euc_{\alpha\gamma} \go^{ab}  (\nabla_a \qo_u^\alpha -  \qo^\alpha_c \SFF^c_{au}) (\nabla_b \qo_t^\gamma -  \qo^\gamma_d \SFF^d_{bt}) M^{ut}  \\
&+ \half  (\nabla_a \qo_u^\alpha -  \qo^\alpha_c \SFF^c_{au})  (\qo^{-1})_\beta^a  (\nabla_b \qo_t^\beta -  \qo^\beta_d \SFF^d_{bt})  (\qo^{-1})_\alpha^b M^{ut}.
\end{split}
\]
Since we have chosen $\W_h$ to be the set of all points of distance less than $h$ from $\S$, we have $M^{uv} = \kappa\go^{uv}$, where $\kappa$ is the volume of the $k-1$ dimensional unit sphere divided by $k+2$.
Hence the reduced energy density is
\beq
\begin{split}
\calW &= \frac{\kappa}{2} \euc_{\alpha\gamma} \go^{ab} \go^{ut}
(\nabla_a \qo_u^\alpha -  \qo^\alpha_c \SFF^c_{au})
(\nabla_b \qo_t^\gamma -  \qo^\gamma_d \SFF^d_{bt})  \\
&+
\frac{\kappa}{2}
\go^{ut}  ((\qo^{-1})_\alpha^b\nabla_a \qo_u^\alpha -  \SFF^b_{au})
((\qo^{-1})_\beta^a \nabla_b \qo_t^\beta - \SFF^a_{bt}).
\end{split}
\label{eq:calW}
\eeq

Since  $(dF)_a^\alpha = \qo_a^\alpha$, it follows that
\beq\label{eq:qsym}
\nabla_c \qo_a^\alpha = \nabla_a \qo_c^\alpha.
\eeq
Differentiating the equation $0 = \Ppar \circ \iperp = \Ppar \circ \qo^{-1} \circ \qo \circ \iperp
= \Pparq\circ\qperp$, we obtain
\[
0 = \nabla\Pparq \circ \qperp + \Pparq \circ \nabla \qperp,
\]
or, in index notation,
\beq\label{eq:dbp}
(\nabla_a \qo_u^\alpha) (\qo^{-1})_\alpha^b = -(\nabla_a (\qo^{-1})_\alpha^b) \qo_u^\alpha.
\eeq
By equations~\eqref{eq:qsym}, and~\eqref{eq:dbp}, the fact the metrics are covariant constants and the orthogonality of $\qo$,
\[
\begin{aligned}
(\nabla_a \qo_u^\alpha) (\qo^{-1})_\alpha^b &= -(\nabla_a (\qo^{-1})_\alpha^b) \qo_u^\alpha =
-\euc_{\alpha\gamma}\go^{bc}  (\nabla_a \qo^\gamma_c) \qo_u^\alpha \\
&= -\euc_{\alpha\gamma}\go^{bc}  (\nabla_c \qo^\gamma_a) \qo_u^\alpha =
\euc_{\alpha\gamma}\go^{bc}  (\nabla_c \qo_u^\alpha) \qo^\gamma_a.
\end{aligned}
\]
Substituting this last identity as well as the symmetry of the second fundamental form into the second term of \eqref{eq:calW},
\[
\begin{split}
\go^{ut}  ((\qo^{-1})_\alpha^b\nabla_a \qo_u^\alpha -  \SFF^b_{au})
((\qo^{-1})_\beta^a \nabla_b \qo_t^\beta - \SFF^a_{bt}) &= \\
&\hspace{-4cm} =
\go^{ut}  (\euc_{\alpha\gamma}\go^{bc}  (\nabla_c \qo_u^\alpha) \qo^\gamma_a -  \go^{bc} \go_{ad} \SFF^d_{cu})
((\qo^{-1})_\beta^a \nabla_b \qo_t^\beta - \SFF^a_{bt}) \\
&\hspace{-4cm} =
\go^{ut}  \euc_{\alpha\gamma}\go^{bc}   ((\nabla_c \qo_u^\alpha) -
\qo^\alpha_d   \SFF^d_{cu})
(\qo^\gamma_a(\qo^{-1})_\beta^a \nabla_b \qo_t^\beta - \qo^\gamma_a\SFF^a_{bt}) \\
&\hspace{-4cm} = |\Pparq\circ(\nabla\qperp - \qpar\circ\SFF)|^2.
\end{split}
\]
Thus,
\[
\calW = \frac{\kappa}{2} |\nabla \q^\perp - \qpar\circ\SFF|^2 +
\frac{\kappa}{2}  |\Pparq \circ (\nabla \qperp  - \qpar\circ\SFF)|^2.
\]
Noting that $|\cdot|^2 = |\Pparq\circ\cdot| +  |\Pperpq\circ\cdot|$, $\Pparq\circ\qpar=\id$, and $\Pperpq\circ\qperp=0$, we have
\[
\calW = \kappa  |\Pparq \circ \nabla \qperp  - \SFF|^2 +
\frac{\kappa}{2} |\Pperpq\circ \nabla \q^\perp|^2.
\]
The proposition follows immediately.
\end{proof}

\section{Lower semicontinuity}

In this section we show that a sequence $f_h$ that satisfies the finite bending assumption \eqref{eq:fba} and whose \rr-limit is $(F,\qperp)$, satisfies the lower-semicontinuity property,
\[
\liminfH \Eh[f_h] \ge \Elim[F,\qperp].
\]
We first pass to a subsequence $f_{h_k}$ so that
\[
\lim_{k \to \infty} \calE_{h_k} [f_{h_k}] = \liminfH \Eh[f_h].
\]
Thus, in the following arguments, we may freely pass to a further subsequence; from now on, we drop the subscript $k$.

Let $\qo_h$ be a sequence obtained from the rigidity Theorem~\ref{th:rig2}, and let $\tqo_h$ be an orthogonal projection of $\qo_h$ on $\SO{n}$, i.e., $\tqo_h\in\SO{n}$ and $|\qo_h-\tqo_h|=\dist(\qo_h,\SO{n})$. Applying the Cauchy-Schwarz inequality and the invariance of $\SO{n}$ under parallel transport,
\[
|\pi^*\qo_h - \pi^*\tqo_h|^2 \le 2\,|df_h\circ\Pi - \pi^*\qo_h|^2 + 2\,\dist^2(df_h,\SO{n}).
\]
Averaging over $\W_h$,  using once more the properties of the push-forward operator $\piPush$, estimate~\eqref{eq:rig6}, the  finite bending assumption~\eqref{eq:fba}, and the estimate of the volume form discrepancy, Lemma~\ref{lm:cm},
we have
\beq
\dashint_\S |\qo_h - \tqo_h|^2\,\VolumeS = O(h^2).
\label{eq:qo-tqo}
\eeq
It follows from Eqs.~\eqref{eq:rig6} and \eqref{eq:qo-tqo} that
\beq
\dashintW |df_h\circ\Pi - \pi^*\tqo_h|^2\,\Volume = O(h^2).
\label{eq:tqo}
\eeq
Passing to a subsequence and using the $L^2$-convergence of $\qo_h$ to $\qo$ (Proposition~\ref{pr:wcqh}), we have
\[
\limH \,\,\dashint_\S |\tqo_h - \qo|^2\,\VolumeS =  0.
\]
Note  that we can only guarantee the convergence of $\tqo_h$ to $\qo$ in $L^2(\S;T^*\calM|_\S\otimes\R^n)$, whereas $\qo_h$ converges to $\qo$ weakly in $W^{1,2}(\S;T^*\calM|_\S\otimes\R^n)$. The reason for defining the possibly less regular sequence $\tqo_h$ will be made clear further below.

\begin{proposition}
The sequence
\beq
\ao_h = \frac{\qo_h - \tqo_h}{h}
\label{eq:ah}
\eeq
has a subsequence (not relabeled) that weakly converges, as $h\to0$, in $L^2(\S;T^*\calM|_\S\otimes\R^n)$; we denote the limit by $\ao$.
\end{proposition}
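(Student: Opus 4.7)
The plan is extremely short: this is simply weak $L^2$-compactness of a bounded sequence, so the whole proof reduces to checking the $L^2$-bound and invoking Banach--Alaoglu (equivalently, the fact that the closed unit ball of a Hilbert space is weakly sequentially compact).

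First, I would observe that by the very definition \eqref{eq:ah} of $\ao_h$ together with the estimate \eqref{eq:qo-tqo} coming from the rigidity theorem, we have
\[
\dashint_\S |\ao_h|^2\,\VolumeS \;=\; \frac{1}{h^2}\dashint_\S |\qo_h - \tqo_h|^2\,\VolumeS \;=\; O(1).
\]
Thus $\{\ao_h\}$ is a uniformly bounded sequence in the Hilbert space $L^2(\S;T^*\calM|_\S\otimes\R^n)$ (equipped with the inner product induced by $\go|_\S$, $\euc$, and $\VolumeS$). Since the closed ball in a Hilbert space is weakly sequentially compact, we may extract a subsequence (not relabeled) and an element $\ao\in L^2(\S;T^*\calM|_\S\otimes\R^n)$ such that $\ao_h\weakly\ao$.

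There is really no obstacle here; the only thing worth emphasizing (and presumably the reason the authors bothered to introduce $\tqo_h$ rather than work directly with $\qo_h$) is that $\tqo_h$ is generally only in $L^2$ rather than $W^{1,2}$, so $\ao_h$ is only $L^2$-bounded, not $W^{1,2}$-bounded; hence the conclusion must be phrased as weak convergence in $L^2$ rather than in $W^{1,2}$. This is exactly what the proposition asserts, and nothing further is needed for its proof.
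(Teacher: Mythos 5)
Your proof is correct and coincides with the paper's: both observe that estimate \eqref{eq:qo-tqo} gives a uniform $L^2$ bound on $\ao_h$ and then invoke weak sequential compactness of bounded sets in the Hilbert space $L^2(\S;T^*\calM|_\S\otimes\R^n)$. Your closing remark about why the conclusion is only weak $L^2$ convergence (rather than $W^{1,2}$) is a sensible aside but not required for the proof.
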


\begin{proof}
This is an immediate consequence of Eq.~\eqref{eq:qo-tqo}, as it follows that the sequence $\ao_h$ is bounded in $L^2(\S;T^*\calM|_\S\otimes\R^n)$.
\end{proof}

\begin{proposition}
\label{prop:dfh-dhfh}
Let $F_h = \upi(f_h)$ and define
\[
\hf_h = F_h\circ\pi + \pi^*\qperp_h\circ\lambda.
\]
Then
\[
\limH\,\,\dashintW |df_h - d\hf_h|^2\,\Volume = O(h^2).
\]
\end{proposition}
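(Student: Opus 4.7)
The plan is to use the fact that $\hf_h$ is precisely the recovery-sequence ansatz \eqref{eq:rec_seq} with data $(F_h,\qperp_h)$, so Proposition~\ref{prop:drecovery} computes $d\hf_h$ explicitly in the trivialization $\sigma\oplus\iota$:
\[
d\hf_h\circ(\sigma\oplus\iota) = \pi^*(dF_h\oplus\qperp_h) + \pi^*(\nabla\qperp_h)\circ(\lambda\otimes\pi^*\Ppar).
\]
On the other hand, Theorem~\ref{th:rig2} furnishes a section $\qo_h = \qpar_h\oplus\qperp_h$ with $df_h\circ\Pi \approx \pi^*\qo_h$ in $L^2$-mean on $\W_h$. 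Since Lemma~\ref{lm:siPi} yields $|\Pi - \sigma\oplus\iota| = O(h)$ pointwise and Corollary~\ref{cy:df_bdd} bounds $df_h$ uniformly in $L^2$, the estimate~\eqref{eq:rig6} upgrades via the triangle inequality to
\[
\dashintW |df_h\circ(\sigma\oplus\iota) - \pi^*\qo_h|^2\,\Volume = O(h^2).
\]

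Subtracting the two displayed identities and using $\qo_h - (dF_h\oplus\qperp_h) = (\qpar_h - dF_h)\oplus 0$, I obtain
\[
(df_h - d\hf_h)\circ(\sigma\oplus\iota) = \bigl(df_h\circ(\sigma\oplus\iota) - \pi^*\qo_h\bigr) + \pi^*(\qpar_h - dF_h)\circ\pi^*\Ppar - \pi^*(\nabla\qperp_h)\circ(\lambda\otimes\pi^*\Ppar).
\]
Each of these three terms has $L^2(\W_h)$-norm of order $O(h)$: the first by the rigidity estimate just displayed; the second because its pointwise norm equals $|(\qpar_h - dF_h)_{\pi(\xi)}|$ and Proposition~\ref{prop:dFh-qpar} together with Lemma~\ref{lm:cm} (to compare $\piPush\Volume/|\W_h|$ with $\VolumeS/|\S|$) yields $\dashint_\S |\qpar_h - dF_h|^2\,\VolumeS = O(h^2)$; and the third because $|\lambda|(\xi) = |\xi| \le h$ on $\W_h$ while $\nabla\qperp_h$ is uniformly $L^2$-bounded on $\S$ by~\eqref{eq:rig8}. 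Applying the triangle inequality,
\[
\dashintW \bigl|(df_h - d\hf_h)\circ(\sigma\oplus\iota)\bigr|^2\,\Volume = O(h^2).
\]

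Finally, since $\sigma\oplus\iota$ is an isometry with respect to $\tilde\go$, and $\tilde\go - \go = O(\lambda) = O(h)$ on $\W_h$ by Corollary~\ref{cy:tgg}, the norm of the one-form $df_h - d\hf_h$ measured by $\go$ differs from its norm after composing with $\sigma\oplus\iota$ only by multiplicative $(1+O(h))$ factors. The $O(h^2)$ bound therefore transfers to $|df_h - d\hf_h|^2$, proving the proposition. The only real obstacle is keeping the two metrics $\go$ and $\tilde\go$ and the two identifications $\Pi$ and $\sigma\oplus\iota$ straight; once that bookkeeping is arranged, the three error terms are immediately controlled by the rigidity and compactness results already established.
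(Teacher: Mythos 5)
Your proof is correct and follows essentially the paper's strategy, with one modest rearrangement of where the $\Pi$-versus-$(\sigma\oplus\iota)$ discrepancy is paid for. The paper writes $|df_h - d\hf_h|^2 = |(df_h - d\hf_h)\circ\Pi|^2$ using the exact $\go$-isometry $\Pi$, and then replaces $\Pi$ by $\sigma\oplus\iota$ inside the $d\hf_h$ factor; this costs a term $|d\hf_h|^2\,|\Pi - \sigma\oplus\iota|^2$, so the paper needs a uniform $L^2(\W_h)$-bound on $d\hf_h$, which it assembles from Eq.~\eqref{eq:dhfh}, Lemma~\ref{lm:siPi}, Lemma~\ref{lm:FL2}, and Eqs.~\eqref{eq:rig7}--\eqref{eq:rig8}. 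You instead pay the $\Pi$-vs-$(\sigma\oplus\iota)$ penalty on $df_h$, where the $L^2$-bound is already available as Corollary~\ref{cy:df_bdd}, and at the very end convert $|(df_h-d\hf_h)\circ(\sigma\oplus\iota)|_{\pi^*\go}$ to $|df_h-d\hf_h|_\go$ via Corollary~\ref{cy:tgg}. Both routes rely on the same core ingredients: the rigidity estimate \eqref{eq:rig6}, the $L^2$-bound \eqref{eq:rig8} on $\nabla\qo_h$, Proposition~\ref{prop:dFh-qpar} for $\dashint_\S|dF_h - \qpar_h|^2 = O(h^2)$, Lemma~\ref{lm:siPi}, and Lemma~\ref{lm:cm} for comparing the measures. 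Your version spares the $L^2$-bound on $d\hf_h$ at the small cost of an extra metric comparison at the end; the net bookkeeping is about the same, and either proof is acceptable.
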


\begin{proof}
Compare the definition of $\hf_h$ with that of the recovery sequence \eqref{eq:rec_seq}.
By an argument similar to that used in Proposition~\ref{prop:drecovery} we find
\beq
\begin{aligned}
d\hf_h\circ\sigma &= \pi^*dF_h + \pi^*(\nabla\qperp_h)\circ\lambda \\
d\hf_h\circ\iota &= \pi^*\qperp_h.
\end{aligned}
\label{eq:dhfh}
\eeq
Using the invariance of the inner-product under parallel transport, along with the Cauchy-Schwarz inequality (twice) and Lemma~\ref{lm:siPi},
\[
\begin{split}
|df_h - d\hf_h|^2 &= |df_h\circ\Pi - d\hf_h\circ\Pi|^2 \\
&\le 2\,|df_h\circ\Pi - d\hf_h(\sigma\oplus\iota) |^2 + 2\,|d\hf_h|^2 |(\sigma\oplus\iota)  - \Pi|^2 \\
&\le 4\,|df_h\circ\Pi - \pi^*(dF_h\oplus\qperp_h)|^2 + 4\,|\pi^*(\nabla\qperp_h)\circ\lambda|^2 + 2\,|d\hf_h|^2 \,O(h^2).
\end{split}
\]
We then average over $\W_h$.
The first term on the right hand side is $O(h^2)$ by Eq.~\eqref{eq:rig6}, Proposition~\ref{prop:dFh-qpar}, and Lemma~\ref{lm:cm}. The second term is $O(h^2)$ because  $\nabla \qo_h$ is bounded in $L^2$ by Eq.~\eqref{eq:rig8} and $|\lambda| = O(h)$. Finally, the third term is $O(h^2)$ because $|d\hf_h|^2$ is uniformly bounded in $L^2(\W_h)$ as obtained by combining Eq.~\eqref{eq:dhfh}, Lemma~\ref{lm:siPi}, Lemma~\ref{lm:FL2}, Eq.~\eqref{eq:rig7} and Eq.~\eqref{eq:rig8}.
\end{proof}

\begin{proposition}
\label{prop:bh}
Let $\hf_h$ be defined as above.
Then the sequence
\beq
\bo_h = \frac{df_h - d\hf_h}{h}
\label{eq:bh}
\eeq
weakly converges to zero in the sense of Definition~\ref{df:wc}.
\end{proposition}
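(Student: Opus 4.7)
The plan exploits that $\bo_h$ is exact: since scalar multiplication by $1/h$ commutes with $d$,
\[
\bo_h = d\chi_h, \qquad \chi_h := \frac{f_h - \hf_h}{h}.
\]
First I would show $\chi_h \to 0$ strongly in $L^2$ on $\W_h$. The key input is that $\chi_h$ has vanishing fiber-average: by construction $F_h = \upi(f_h)$, while
\[
\upi(\hf_h) = \upi(F_h\circ\pi) + \upi(\pi^*\qperp_h\circ\lambda) = F_h + \qperp_h\circ\upi(\lambda) = F_h,
\]
because the fibers of $\W_h$ are balls symmetric about the origin and $\lambda$ is the tautological radial section, hence odd on each fiber. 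The fibered Poincar\'e inequality (as used in Lemma~\ref{lm:fpi}) combined with Proposition~\ref{prop:dfh-dhfh} then yields
\[
\dashintW|f_h-\hf_h|^2\,\Volume \le Ch^2\dashintW|df_h-d\hf_h|^2\,\Volume = O(h^4),
\]
so $\dashintW|\chi_h|^2\,\Volume = O(h^2)$.

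By Lemma~\ref{lm:wc}, weak convergence of $\bo_h$ to zero reduces to verifying $\dashintW\go(\bo_h,\beta_h)\,\Volume \to 0$ for test sequences $\beta_h = h^{j-p}(\pi^*\beta\circ\lambda^{\otimes p})$ with $\beta\in L^2(\S;\Lambda^iT^*\S\otimes\Lambda^j\NS^*\otimes\NS^{*\otimes p})$ and $i+j=n-1$. Interpreting this pairing via a wedge with the Hodge dual of $\beta_h$ and applying Stokes' theorem transfers the derivative from $\chi_h$ onto $\beta_h$, producing a bulk term of the form $\int_{\W_h}\chi_h\cdot(\text{codifferential of }\beta_h)\,\Volume$ plus a boundary integral on $\partial\W_h$. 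For the bulk term, the codifferential of $\beta_h$ is controlled in $L^2$: differentiation of $\lambda^{\otimes p}$ in the fiber directions produces $\lambda^{\otimes(p-1)}$ of pointwise size $h^{p-1}$, balanced by the $h^{j-p}$ prefactor, while the tangential contribution involves $\pi^*d\beta$ of bounded $L^2$-norm. Combined with $\|\chi_h\|_{L^2}=O(h)$, the bulk term vanishes by Cauchy--Schwarz.

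The main obstacle is the boundary term on $\partial\W_h = S_h\NS$, because $|\partial\W_h|/|\W_h| = O(h^{-1})$ is growing. I would handle it by rescaling via $\mu_h$ (Subsection~3.5) to the fixed domain $\W_{h_0}$, where $\tilde\chi_h := \chi_h\circ\mu_h$ has controlled $W^{1,2}$-norm: the $L^2$-part transfers to $O(h)$, while the $\bo_h = d\chi_h$ bound, together with the anisotropic scaling of $\mu_h^*$ on cotangent vectors (equations~\eqref{eq:sc3}--\eqref{eq:sc4}), controls the derivative. A standard trace inequality on the fixed tubular neighborhood $\W_{h_0}$ then bounds the rescaled boundary integral. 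The structural fact $\omega|_{\partial\W_h}=0$ from Lemma~\ref{lm:om} eliminates any component of the Hodge dual of $\beta_h$ containing the full $\omega$-factor, so that the surviving contributions carry at least one fiber-radial differential, and hence enough powers of $h$ to compensate the $h^{-1}$ boundary-to-volume ratio. The boundary term therefore also vanishes in the limit.
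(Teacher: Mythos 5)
Your opening step is correct and matches the paper: since $\upi(\hf_h)=\upi(f_h)$ the fibered Poincar\'e inequality together with Proposition~\ref{prop:dfh-dhfh} gives $\dashintW |\chi_h|^2\,\Volume = O(h^2)$, and this is the input the paper uses as well. Beyond that, however, your route departs from the paper's and runs into genuine difficulties.

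The paper never invokes Lemma~\ref{lm:wc}, and never integrates by parts directly on the shrinking domain $\W_h$. Instead it pulls everything back to the \emph{fixed} domain $\W_{h_0}$ and tests against a compactly supported smooth form $\Phi \in C_0^\infty(\W_{h_0})$: by Lemma~\ref{lm:star} the codifferential $d^{\tilde *}$ of the fixed metric $\tilde\go$ agrees with that of $\mu_h^\star\tilde\go$, and since $\Phi$ is $h$\emph{-independent}, the Cauchy--Schwarz constant $\|d^{\tilde *}\Phi\|_{L^2}$ is fixed. The boundary term is identically zero because $\Phi$ has compact support, and density of $C_0^\infty$ in $L^2$ closes the argument. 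Two features of this are essential: (i) the test form lives on the fixed domain and carries no $h$-dependence after rescaling; (ii) compact support eliminates the boundary term outright.

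Your version sacrifices both. Working with $\beta_h = h^{j-p}(\pi^*\beta\circ\lambda^{\otimes p})$ on $\W_h$, the fiber part of $d\beta_h$ has pointwise size $\sim h^{j-p}|\lambda|^{p-1} = O(h^{j-1})$, which is one power of $h$ \emph{larger} than $|\beta_h| = O(h^j)$. Combined with $\|\chi_h\|_{L^2}=O(h)$ the Cauchy--Schwarz bound on the bulk term is $O(h^j)$ --- exactly the order one obtains directly by Cauchy--Schwarz on $\dashintW\go(\bo_h,\beta_h)\,\Volume$ without integrating by parts at all. For $j\ge 1$ both sides already vanish trivially; for the nontrivial case $j=0$ (which occurs when $k=1$, with $p$ arbitrary) the Stokes argument gains nothing. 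The vanishing fiber average $\upi(\chi_h)=0$ kills the $p=1$ contribution, but for $p\ge 2$ the first moment $\upi(\chi_h\,\lambda)$ is not controlled, and your bulk estimate stalls at $O(1)$.

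The boundary term is also problematic. Lemma~\ref{lm:om} states $\omega|_{\partial\W_h}=0$, i.e.\ the full $k$-form in the normal directions vanishes. This kills the boundary contribution only for the components of the test form with $j=k$; those with $j<k$ (in particular $j=k-1$, $i=m$) restrict nontrivially to $\partial\W_h$ and do not carry the "extra fiber-radial differential" you assert. A trace interpolation inequality $\|u\|_{L^2(\partial\W_{h_0})}^2\lesssim \|u\|_{L^2(\W_{h_0})}\|u\|_{W^{1,2}(\W_{h_0})}$ on the rescaled domain would give $\|\chi_h\circ\mu_h\|_{L^2(\partial\W_{h_0})}=O(h^{1/2})$, which is promising, but you don't invoke it; as written, you claim only a bound, not decay, and the reasoning from $\omega|_{\partial\W_h}=0$ does not supply the missing factor.

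In short, the decomposition $\bo_h=d\chi_h$ and the $O(h)$ bound on $\chi_h$ are right, but the integration by parts should be done on the rescaled domain against compactly supported test forms, exactly as the paper does. Testing against the $h$-dependent sequences $\beta_h$ of Lemma~\ref{lm:wc} leads to a derivative estimate on $\beta_h$ that exactly cancels the gain from $\chi_h$, and to boundary terms that your argument does not close.
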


\begin{proof}
Let $\Phi\in C_0^\infty(\W_{h_0};T\W_{h_0}\otimes\R^n)$. Denote by $d^{\tilde *}$ the dual of the exterior derivative with respect to the $L^2$ pairing of $\tilde \go$. By Lemma~\ref{lm:star}, $d^{\tilde *}$ is also the  dual of the exterior derivative with respect to the $L^2$ pairing of $\mu_h^\star \go$. Integrating by parts,
\[
\int_{\W_{h_0}} (\mu_h^\star\tilde \go)(\mu_h^*(df_h - d\hf_h),\Phi)\,\Volumet = \int_{\W_{h_0}} (\mu_h^\star\tilde\go)((f_h - \hf_h)\circ \mu_h ,d^{\tilde*} \Phi)\,\Volumet.
\]
Using the Cauchy-Schwarz inequality and Lemma~\ref{lm:resc}
\begin{align*}
\Abs{\int_{\W_{h_0}} (\mu_h^\star\tilde \go)(\mu_h^*(df_h - d\hf_h),\Phi)\,\Volumet}^2 &\le
C'(\Phi) \,\brk{\int_{\W_{h_0}} |f_h - \hf_h|^2\circ \mu_h\,\Volumet} \\
&\leq C(\Phi) \, \brk{\dashintW |f_h-\hf_h|^2 \Volume}.
\end{align*}
Using the fact that $\upi(\hf_h) = \upi(f_h)$ and applying the fibered Poincar\'{e} inequality,
\[
\begin{split}
\Abs{\int_{\W_{h_0}}(\mu_h^\star\tilde \go)(\mu_h^*(df_h - d\hf_h),\Phi)\,\Volume}^2 &\le
\frac{C(\Phi)}{|\W_h|} \int_\S \upi\brk{|f_h - \hf_h|^2}\,\pi_\star\Volume \\
&\le \frac{C(\Phi) h^2}{|\W_h|} \int_\S \upi\brk{|df_h - d\hf_h|^2}\,\pi_\star\Volume \\
&= C(\Phi) h^2  \, \dashintW |df_h - d\hf_h|^2\,\Volume.
\end{split}
\]
Dividing by $h^2$ and applying Proposition~\ref{prop:dfh-dhfh},
\begin{align*}
\limH\,\,\Abs{\int_{\W_{h_0}} (\mu_h^\star\tilde \go)\left(\mu_h^*\left(\frac{df_h - d\hf_h}{h}\right),\Phi\right)\,\Volumet}^2 &\le
C(\Phi) \, \limH\,\,\dashintW |df_h - d\hf_h|^2\,\Volume \\
&= 0.
\end{align*}
Since $C^\infty(\W_{h_0};T^*\W_{h_0}\otimes\R^n)$ is dense in $L^2(\W_{h_0};T^*\W_{h_0}\otimes\R^n)$, this equation holds for all $\Phi\in L^2(\W_{h_0};T^*\W_{h_0}\otimes\R^n)$.
\end{proof}

\begin{proposition}
The sequence
\beq
\co_h = \frac{dF_h - \qpar_h}{h}
\label{eq:ch}
\eeq
has a subsequence (not relabeled) that weakly converges,  as $h\to0$, in $L^2(\S;T^*\S\otimes\R^n)$; we denote the limit by $\co$.
\end{proposition}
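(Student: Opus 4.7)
The plan is essentially to invoke the standard weak compactness of bounded sequences in $L^2$, with the bound on $\co_h$ furnished directly by Proposition~\ref{prop:dFh-qpar}. Concretely, Proposition~\ref{prop:dFh-qpar} asserts
\[
\dashint_\S |dF_h - \qpar_h|^2\,\VolumeS = O(h^2),
\]
so dividing by $h^2$ yields
\[
\dashint_\S |\co_h|^2\,\VolumeS = O(1),
\]
i.e., $\co_h$ is uniformly bounded in $L^2(\S; T^*\S \otimes \R^n)$.

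Since $L^2(\S; T^*\S \otimes \R^n)$ is a Hilbert space, bounded sequences are weakly precompact (Banach--Alaoglu). Hence, after passing to a subsequence which I do not relabel, $\co_h \rightharpoonup \co$ weakly in $L^2(\S; T^*\S \otimes \R^n)$ for some $\co \in L^2(\S; T^*\S \otimes \R^n)$.

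There is no genuine obstacle here: the statement is purely a restatement of Proposition~\ref{prop:dFh-qpar} combined with weak compactness. The only subtlety worth noting is that the $O(h^2)$ improvement in Proposition~\ref{prop:dFh-qpar} (as opposed to the weaker $o(1)$ bound from Corollary~\ref{cy:boring}) is exactly what gives a nontrivial $L^2$ bound after the division by $h$, so without Proposition~\ref{prop:dFh-qpar} one would not get a weak limit of $\co_h$ at this rescaling. The limit $\co$ plays the role of a higher-order ``tangential correction'' to $dF = \qpar$, analogous to $\ao$ and $\bo_h$ in the preceding propositions, and will presumably enter the lower semicontinuity argument that follows.
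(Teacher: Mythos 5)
Your proof matches the paper's: both invoke Proposition~\ref{prop:dFh-qpar} to get the $O(h^2)$ bound, divide by $h^2$, and conclude by weak compactness of bounded sequences in $L^2(\S;T^*\S\otimes\R^n)$. The paper also cites Lemma~\ref{lm:cm} (the volume comparison), but since Proposition~\ref{prop:dFh-qpar} is already stated with the measure $\VolumeS$, that citation is inessential and your omission of it is harmless.
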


\begin{proof}
This is an immediate consequence of Proposition~\ref{prop:dFh-qpar}, which together with Lemma~\ref{lm:cm}
implies that
\[
\dashint_\S |dF_h - \qpar_h|^2\,\VolumeS = O(h^2),
\]
hence $\co_h$ is a bounded sequence in $L^2(\S;T^*\S\otimes\R^n)$.
\end{proof}

\medskip
We now turn to estimate $\Eh[f_h]$ as $h\to0$.
By the invariance of $\SO{n}$ under parallel transport and the fact that $\tqo_h\in\SO{n}$,
\[
\begin{split}
\dist^2(df_h,\SO{n}) &= \dist^2(df_h\circ\Pi,\SO{n}) \\
&=  \dist^2(df_h\circ\Pi\circ\pi^*\tqo_h^{-1},\SO{n}) \\
&= \dist^2\brk{I + h\,\Gh,\SO{n}},
\end{split}
\]
where $\Gh\in L^2(\W_h;\GL{n})$ is given by
\beq
\Gh = \frac{df_h\circ\Pi\circ\pi^*\tqo_h^{-1} - \id}{h}.
\label{eq:defGh}
\eeq
Thus,
\[
\Eh[f_h] = \frac{1}{h^2} \dashintW \dist^2(\id + h\,\Gh,\SO{n})\,\Volume.
\]

We start by making a few observations about $\Gh$:

\begin{proposition}
\label{prop:Gh_bdd}
The sequence $G_h\in L^2(\W_h;\R^n\otimes\R^n)$ is bounded, namely,
\[
\dashintW |\Gh|^2\,\Volume = O(1).
\]
\end{proposition}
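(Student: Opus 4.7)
The plan is to reduce the bound on $G_h$ directly to the $O(h^2)$ estimate \eqref{eq:tqo} already obtained for the sequence $\tqo_h$. The key algebraic observation is that because $\tqo_h \in \SO{n}$, the map $\pi^* \tqo_h$ is invertible and $\pi^*\tqo_h \circ \pi^*\tqo_h^{-1} = \id$. Therefore I would write
\[
h\,G_h \;=\; df_h\circ\Pi \circ \pi^*\tqo_h^{-1} - \id \;=\; \brk{df_h\circ\Pi - \pi^*\tqo_h}\circ \pi^*\tqo_h^{-1}.
\]

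The main point is then that $\pi^*\tqo_h^{-1}$ is an isometry pointwise (since $\tqo_h \in \SO{n}$, and $\Pi$ is an isometry so parallel transport preserves the metric). Consequently, composition with $\pi^*\tqo_h^{-1}$ on the right preserves the Hilbert--Schmidt norm, and I obtain the pointwise identity
\[
|G_h|^2 \;=\; \frac{1}{h^2}\,|df_h\circ\Pi - \pi^*\tqo_h|^2.
\]

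Averaging over $\W_h$ and invoking estimate \eqref{eq:tqo}, which states that
\[
\dashintW |df_h\circ\Pi - \pi^*\tqo_h|^2\,\Volume \;=\; O(h^2),
\]
immediately yields $\dashintW |G_h|^2\,\Volume = O(1)$, as required. There is no real obstacle here: everything follows from the orthogonal invariance of the fiberwise norm and the already-established quantitative rigidity bound on the approximating rotation field $\tqo_h$; the only subtlety is being careful that $\pi^*\tqo_h$ acts on the appropriate tensor factor so that the norm-preservation actually applies.
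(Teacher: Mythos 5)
Your proof is correct and is essentially identical to the paper's: both rewrite $h\,\Gh = (df_h\circ\Pi - \pi^*\tqo_h)\circ\pi^*\tqo_h^{-1}$, use orthogonality of $\tqo_h$ to drop the right-composition without changing the norm, and then invoke estimate~\eqref{eq:tqo}.
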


\begin{proof}
This is an immediate consequence of the estimate \eqref{eq:tqo}, as
\[
\begin{split}
\dashintW |\Gh|^2\,\Volume &= \frac{1}{h^2} \dashintW |df_h\circ\Pi\circ\pi^*\tqo_h^{-1} - \id|^2\,\Volume \\
&= \frac{1}{h^2} \dashintW |df_h\circ\Pi - \pi^*\tqo_h|^2\,\Volume =  O(1).
\end{split}
\]
\end{proof}

\begin{proposition}
\label{prop:Ghexpression}
$\Gh$ can be expressed in the following form:
\[
\begin{split}
\Gh  &=
\bo_h \circ\Pi\circ\pi^*\tqo_h^{-1} +
\pi^*((\ao_h + \co_h\circ\Ppar) \circ\tqo_h^{-1}) \\
&+\frac{d\hf_h\circ(\Pi  - \sigma\oplus\iota + \sigma\circ\pi^*\SFF\circ(\lambda\otimes\pi^*\Ppar))}{h} \circ(\pi^*\tqo_h^{-1}) \\
&+ \pi^*(\nabla\qperp_h - dF_h\circ\SFF)\circ\brk{\frac{\lambda}{h}\otimes\pi^*\Ppar}
 \circ\pi^*\tqo_h^{-1} \\
&
- \pi^*(\nabla\qperp_h)\circ\left(\lambda \otimes\pi^*\SFF\circ\brk{\frac{\lambda}{h}\otimes\pi^*\Ppar}
\right)  \circ\pi^*\tqo_h^{-1}.
\end{split}
\]
\end{proposition}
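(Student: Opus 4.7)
The plan is to derive the stated expression for $G_h$ by starting from its definition, performing a sequence of algebraic substitutions using the decompositions of $df_h$ and of $\Pi$, and then invoking the explicit formulas for $d\hf_h$ from Proposition~\ref{prop:drecovery} together with the definitions of the sequences $\ao_h$, $\bo_h$, and $\co_h$. In other words, the proposition is essentially a bookkeeping identity, with no estimates involved; the only nontrivial ingredients are the identities already collected in Section~\ref{sec:mr} and Section~\ref{sec:recovery}.

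The first step is to split $df_h = d\hf_h + h\,\bo_h$ using \eqref{eq:bh}, so that
\[
df_h\circ \Pi\circ \pi^*\tqo_h^{-1} = d\hf_h\circ \Pi \circ \pi^*\tqo_h^{-1} + h\,\bo_h\circ\Pi\circ\pi^*\tqo_h^{-1}.
\]
This already isolates the first term of the desired expression. The next step is to handle $d\hf_h\circ \Pi$ by writing
\[
\Pi = (\sigma\oplus\iota) - \bigl(\sigma\circ\pi^*\SFF\circ(\lambda\otimes\pi^*\Ppar)\bigr) + \bigl(\Pi-\sigma\oplus\iota + \sigma\circ\pi^*\SFF\circ(\lambda\otimes\pi^*\Ppar)\bigr),
\]
which is tautological, but the third group is exactly the ``error term'' that appears on the second line of the claimed formula.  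The term $d\hf_h\circ \sigma\circ\pi^*\SFF\circ(\lambda\otimes\pi^*\Ppar)$, upon applying the formulas~\eqref{eq:df_h} for $d\hf_h\circ\sigma$, splits naturally into $\pi^*(dF_h\circ\SFF)\circ(\lambda\otimes\pi^*\Ppar)$ and $\pi^*(\nabla\qperp_h)\circ(\lambda\otimes\pi^*\SFF\circ(\lambda\otimes\pi^*\Ppar))$, which will provide the last term of the claimed expression and contribute one half of the $\nabla\qperp_h - dF_h\circ\SFF$ combination.

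The third step is to evaluate $d\hf_h\circ(\sigma\oplus\iota)$. By Proposition~\ref{prop:drecovery} (applied with $(F,\qperp)$ replaced by $(F_h,\qperp_h)$), this equals
\[
\pi^*(dF_h\oplus\qperp_h) + \pi^*(\nabla\qperp_h)\circ(\lambda\otimes\pi^*\Ppar).
\]
Using $dF_h = \qpar_h + h\,\co_h$ (from~\eqref{eq:ch}) and $\qo_h = \tqo_h + h\,\ao_h$ (from~\eqref{eq:ah}), the direct sum rearranges as
\[
\pi^*(dF_h\oplus\qperp_h) = \pi^*\tqo_h + h\,\pi^*(\ao_h + \co_h\circ\Ppar).
\]
Composing everything on the right with $\pi^*\tqo_h^{-1}$ turns the $\pi^*\tqo_h$ contribution into $\id$, which cancels against the $-\id$ in the definition of $G_h$, and leaves the second term $\pi^*((\ao_h+\co_h\circ\Ppar)\circ\tqo_h^{-1})$ of the claimed expression after dividing by $h$. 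The extra piece $\pi^*(\nabla\qperp_h)\circ(\lambda\otimes\pi^*\Ppar)$ combines with the $-\pi^*(dF_h\circ\SFF)\circ(\lambda\otimes\pi^*\Ppar)$ contribution from step two to produce the factor $\nabla\qperp_h - dF_h\circ\SFF$, and the $1/h$ is absorbed into $\lambda/h$ as displayed.

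No step is difficult in isolation; the main potential obstacle is purely notational, namely verifying that the many ``$\circ$'' compositions involving $\Ppar$, $\SFF$, $\lambda$, $\sigma$, $\iota$, $\qo_h^{-1}$, and $\tqo_h^{-1}$ all act on the correct tensor slot after being pulled back by $\pi^*$. Once each composition is interpreted unambiguously as a map $\pi^*T\calM|_\S\to\R^n$ (or $\R^n\to\R^n$ after composing with $\pi^*\tqo_h^{-1}$), collecting the terms produces the formula claimed in the proposition.
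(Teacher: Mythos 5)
The proposal is correct and follows exactly the paper's own proof, which is stated as ``straightforward algebraic manipulations from the definitions \eqref{eq:ah}, \eqref{eq:bh}, \eqref{eq:ch} and formula \eqref{eq:dhfh}''; your write-up simply makes those manipulations explicit. (The only tiny slip is citing \eqref{eq:df_h} rather than \eqref{eq:dhfh} for $d\hf_h\circ\sigma$, but you immediately correct this by invoking Proposition~\ref{prop:drecovery} with $(F,\qperp)$ replaced by $(F_h,\qperp_h)$.)
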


\begin{proof}
This follows after straightforward algebraic manipulations from  the definitions \eqref{eq:ah} and \eqref{eq:bh} of $\ao_h$ and $\bo_h$,  formula
\eqref{eq:dhfh} for $d\hf_h\circ(\sigma\oplus\iota)$, and  the definition \eqref{eq:ch} of $\co_h$.
\end{proof}

\begin{proposition}
Let
\[
\tEh[f_h] = \frac{1}{h^2} \dashintW \chi_h\,\dist^2(\id + h\,\Gh,\SO{n})\,\Volume,
\]
where
\[
\chi_h = \Cases{1 & |G_h| < h^{-1/4} \\ 0 & \text{otherwise},}
\]
and let
\[
J_h[f_h] = \dashint \chi_h \, \Abs{\frac{\Gh + \Gh^T}{2}}^2\,\Volume.
\]
Then,
\[
\limH \brk{J_h[f_h] - \tEh[f_h]} = 0.
\]
\end{proposition}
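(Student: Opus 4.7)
The plan is to apply Lemma~\ref{lm:linearize_dist} pointwise with $A = h\,G_h$ and then use the cutoff $\chi_h$ to control the error. Note that on the set $\{\chi_h = 1\}$ we have $|A| = h|G_h| < h^{3/4}$, so $|A| \to 0$ uniformly. Since $\min(|A|,|A|^2) = |A|^2 = h^2|G_h|^2$ whenever $|A| \le 1$, the lemma gives
\[
\bigl|\dist(\id + h\,G_h,\SO{n}) - h\,|(G_h+G_h^T)/2|\bigr| \;\le\; C h^2 |G_h|^2 \qquad \text{on } \{\chi_h = 1\}.
\]

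Next I would pass from this first-order estimate to the squared quantities that actually appear in $\tEh$ and $J_h$. Writing $a_h = \dist(\id+hG_h,\SO{n})$ and $b_h = h\,|(G_h+G_h^T)/2|$, the bounds $a_h \le |hG_h|$ and $b_h \le h|G_h|$ give
\[
|a_h^2 - b_h^2| = |a_h - b_h|\,(a_h + b_h) \;\le\; C h^2|G_h|^2 \cdot 2h|G_h| \;=\; 2C h^3|G_h|^3
\]
on $\{\chi_h = 1\}$. Dividing by $h^2$ and averaging over $\W_h$ yields
\[
|J_h[f_h] - \tEh[f_h]| \;\le\; \frac{1}{h^2}\dashintW \chi_h\,|a_h^2 - b_h^2|\,\Volume \;\le\; 2C h \dashintW \chi_h|G_h|^3\,\Volume.
\]

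Finally, I would exploit the cutoff once more to turn the cubic integrand into a quadratic one: since $\chi_h|G_h| \le h^{-1/4}$, we have $\chi_h |G_h|^3 \le h^{-1/4} |G_h|^2$, so
\[
|J_h[f_h] - \tEh[f_h]| \;\le\; 2C h^{3/4} \dashintW |G_h|^2 \,\Volume \;=\; O(h^{3/4}),
\]
using Proposition~\ref{prop:Gh_bdd} for the uniform $L^2$ bound on $G_h$. Taking $h \to 0$ finishes the proof. The argument is essentially mechanical; the only subtlety worth noting is the exponent bookkeeping, and the role of the cutoff $\chi_h$ is precisely to ensure that the pointwise first-order Taylor expansion of $\dist^2(\id+\cdot,\SO{n})$ is applicable (so we are in the regime where $\min(|A|,|A|^2) = |A|^2$), while also buying an extra factor of $h^{1/4}$ that converts the $L^3$ quantity $\int|G_h|^3$ into the $L^2$ quantity $\int|G_h|^2$ that we actually control.
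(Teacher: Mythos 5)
Your proof is correct and follows essentially the same route as the paper: both apply Lemma~\ref{lm:linearize_dist} with $A = hG_h$, use the cutoff $\chi_h$ to place the estimate in the $\min(|A|,|A|^2)=|A|^2$ regime, and control the resulting cubic term $h|G_h|^3$. The only (harmless) difference is that the paper bounds $\chi_h\,h|G_h|^3$ pointwise by $h^{1/4}$ and integrates the constant, whereas you spend one factor of $h^{-1/4}$ to reduce to the $L^2$ bound on $G_h$, yielding the slightly sharper rate $O(h^{3/4})$.
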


\begin{proof}
By definition,
\[
J_h[f_h] - \tEh[f_h] = \dashintW \chi_h \brk{ \alpha_h^2 - \beta_h^2}\,\Volume.
\]
where
\[
\alpha_h = \frac{\dist(\id + h\,\Gh,\SO{n})}{h}
\Textand
\beta_h = \Abs{\frac{\Gh + \Gh^T}{2}}.
\]
Using Lemma~\ref{lm:linearize_dist} with $A$ replaced by $h\,\Gh$,
we obtain
\beq
|\alpha_h - \beta_h| \le C \min(|\Gh|,h|\Gh|^2).
\label{eq:ineq_ab}
\eeq

When the indicator function $\chi_h$ is non-zero, $|\Gh| <  h^{-1/4}$, hence
\[
|\alpha_h^2 - \beta_h^2| \le |\alpha_h - \beta_h| \,(|\alpha_h - \beta_h| + \beta_h) \le C h |\Gh|^2 \cdot 2 |\Gh|
=  O(h^{1/4}),
\]
from which follows that
\[
|J_h[f_h] - \tEh[f_h]| \le
\dashintW \chi_h |\alpha_h^2 - \beta_h^2|\,\Volume = O(h^{1/4}),
\]
which completes the proof.
\end{proof}

\medskip
Since, trivially, $\tEh[f_h] \le \Eh[f_h]$, if follows that
\[
\liminfH \,\, \Eh[f_h] \ge \liminfH\,\, J_h[f_h],
\]
hence it only remains to show that
\[
\liminfH\,\, J_h[f_h]  \ge \Elim[F,\qperp].
\]

To this end we write $\Gh = \Gh^{(1)} + \Gh^{(2)}$, where
\[
\Gh^{(1)} = \Gh - \frac{A}{h}  - \pi^*[(\co \circ\Ppar + \ao)\circ\qo^{-1}],
\]
and
\[
\Gh^{(2)} = \frac{A}{h}  + \pi^*[(\co \circ\Ppar + \ao)\circ\qo^{-1}],
\]
where $A$ is given by Eq.~\eqref{eq:A}, namely,
\[
A = \pi^*(\nabla\qperp - \qpar\circ\SFF)\circ (\lambda\otimes\pi^*\Ppar)\circ\pi^*\qo^{-1}.
\]

Thus,
\[
\begin{aligned}
J_h[f_h] &= \frac14 \,\,\dashintW \chi_h\,|\Gh^{(1)} + {\Gh^{(1)}}^T|^2 \,\Volume \\
&+ \frac14 \,\,\dashintW \chi_h\,|\Gh^{(2)} + {\Gh^{(2)}}^T|^2\,\Volume \\
&+ \frac12 \,\,\dashintW \chi_h\,\euc\brk{\Gh^{(1)} + {\Gh^{(1)}}^T,\Gh^{(2)} + {\Gh^{(2)}}^T},\Volume,
\end{aligned}
\]
and
\beq
\begin{aligned}
\liminfH\,\,J_h[f_h] &\ge  \liminfH\,\,\frac14 \,\,\dashintW \chi_h\,|\Gh^{(2)} + {\Gh^{(2)}}^T|^2\,\Volume \\
&+ \liminfH\,\,\frac12 \,\,\dashintW \chi_h\,\euc\brk{\Gh^{(1)} + {\Gh^{(1)}}^T,\Gh^{(2)} + {\Gh^{(2)}}^T}\,\Volume,
\end{aligned}
\label{eq:liminf_ineq}
\eeq

\begin{proposition}
\label{prop:Gh2a}
\[
\liminfH\,\,\frac14 \,\,\dashintW \chi_h\,|\Gh^{(2)} + {\Gh^{(2)}}^T|^2\,\Volume =
\liminfH\,\,\frac14 \,\,\dashintW |\Gh^{(2)} + {\Gh^{(2)}}^T|^2\,\Volume.
\]
\end{proposition}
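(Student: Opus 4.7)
The plan is to show that the difference
\[
D_h \;:=\; \dashintW |\Gh^{(2)} + {\Gh^{(2)}}^T|^2\,\Volume - \dashintW \chi_h\,|\Gh^{(2)} + {\Gh^{(2)}}^T|^2\,\Volume \;=\; \dashintW (1-\chi_h)\,|\Gh^{(2)} + {\Gh^{(2)}}^T|^2\,\Volume
\]
tends to zero as $h\to 0$; the equality of the two liminfs then follows immediately. The elementary bound $|\Gh^{(2)} + {\Gh^{(2)}}^T|^2 \le 4|\Gh^{(2)}|^2$ reduces the problem to proving
\[
\limH \,\,\dashintW (1-\chi_h)\,|\Gh^{(2)}|^2\,\Volume = 0.
\]

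First I would obtain a pointwise domination of $|\Gh^{(2)}|^2$ by a function pulled back from $\S$. Recall $\Gh^{(2)} = A/h + \pi^*\!\brk{(\co\circ\Ppar + \ao)\circ\qo^{-1}}$ with $A = \pi^*(\nabla\qperp - \qpar\circ\SFF)\circ(\lambda\otimes\pi^*\Ppar)\circ\pi^*\qo^{-1}$. Because $|\lambda|\le h$ on $\W_h$, we have $|\lambda/h|\le 1$, while $\qo^{-1}$ is bounded (since $\qo\in\SO{n}$ a.e.\ by Proposition~\ref{prop:4.2}) and $\SFF$ is smooth on $\S$. The remaining factors $\nabla\qperp$, $\qpar=dF$, $\co$, $\ao$ all lie in $L^2(\S)$. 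Hence there exists $g \in L^1(\S)$, independent of $h$, with
\[
|\Gh^{(2)}|^2 \le \pi^* g \qquad \text{pointwise on } \W_h.
\]

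Next I would control the bad set $E_h = \{\chi_h = 0\} = \{|G_h|\ge h^{-1/4}\}$. Chebyshev's inequality together with Proposition~\ref{prop:Gh_bdd} gives
\[
\frac{|E_h|}{|\W_h|} \le h^{1/2}\,\dashintW |G_h|^2\,\Volume = O(h^{1/2}).
\]
Now split $g = (g\wedge M) + (g - g\wedge M)$ for $M>0$. For the bounded piece,
\[
\dashintW (1-\chi_h)\,\pi^*(g\wedge M)\,\Volume \le M\,\frac{|E_h|}{|\W_h|} = O(M h^{1/2}).
\]
For the tail, by Lemma~\ref{lm:cm},
\[
\dashintW \pi^*(g - g\wedge M)\,\Volume = \dashint_\S (g - g\wedge M)\,\VolumeS + o(1),
\]
which is controlled by the $L^1$-norm of $(g - g\wedge M)$ on $\S$. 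Combining,
\[
\limsupH \,\dashintW (1-\chi_h)\,|\Gh^{(2)}|^2\,\Volume \le \dashint_\S (g - g\wedge M)\,\VolumeS,
\]
and letting $M\to\infty$ the right-hand side vanishes by dominated convergence since $g\in L^1(\S)$.

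The only nontrivial ingredient is the equi-integrability of $\pi^* g$ on $(\W_h,\Volume/|\W_h|)$, which is handled by the standard truncation argument above; the rest is bookkeeping. I expect the main (mild) obstacle to be verifying cleanly that the bounding function $g$ is genuinely in $L^1(\S)$, i.e.\ tracking that every factor appearing in $\Gh^{(2)}$ contributes at worst an $L^2$-term on $\S$ and that all geometric quantities (including the factor $|\lambda|/h$ and the bounded objects $\qo^{-1}$ and $\SFF$) stay uniformly bounded on $\W_h$.
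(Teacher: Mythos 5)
Your argument is correct, and it follows the same underlying strategy as the paper: bound $|\Gh^{(2)}+{\Gh^{(2)}}^T|^2$ by $4|\Gh^{(2)}|^2$, dominate $|\Gh^{(2)}|^2$ by a fixed $L^1$ pullback using $|\lambda/h|\le 1$, and combine this with the Chebyshev estimate $|E_h|/|\W_h| = O(h^{1/2})$ coming from Proposition~\ref{prop:Gh_bdd}. The one place where you diverge is the final limiting step: the paper pulls the whole integral back to the fixed domain $\W_{h_0}$ via the rescaling map $\mu_h$ (Lemma~\ref{lm:resc}), at which point the claim is just absolute continuity of the Lebesgue integral for a fixed $L^1$ function against an indicator of a set whose measure tends to zero. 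You instead remain on the $h$-dependent domain $\W_h$ and replace that abstract step with an explicit truncation $g = (g\wedge M) + (g - g\wedge M)$, controlling the bounded piece by $M\,|E_h|/|\W_h|$ and the tail using Lemma~\ref{lm:cm} plus dominated convergence as $M\to\infty$. Both implementations are valid; the paper's rescaling keeps the measure space fixed (which streamlines several other lemmas in the section), while your truncation is more self-contained and avoids invoking the rescaling machinery here. One small point worth making explicit in your write-up: the domination $|\Gh^{(2)}|^2 \le \pi^* g$ relies on $\qo^{-1}$ being uniformly bounded, which indeed holds because $\qo\in\SO{n}$ a.e.\ (Proposition~\ref{prop:4.2}), and the passage $\dashintW \pi^*\phi\,\Volume = \dashint_\S \phi\,\VolumeS + O(h)$ uses the uniform estimate in Lemma~\ref{lm:cm}.
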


\begin{proof}
Consider the difference
\[
\Delta_h = \dashintW (1-\chi_h)\,|\Gh^{(2)} + {\Gh^{(2)}}^T|^2\,\Volume.
\]
Using the Cauchy-Schwarz inequality (twice), the uniform bound $|\lambda/h|\le 1$, and Lemma~\ref{lm:resc},
\beq\label{eq:del}
\begin{split}
\Delta_h &\le 4\,\,\dashintW (1-\chi_h) |\Gh^{(2)}|^2\,\Volume \\
&\le 8\,\, \dashintW (1-\chi_h) \pi^* \brk{|\nabla\qperp|^2 + |(\co\circ\Ppar + \ao)|^2
+ |(\qpar\circ\SFF)|^2}\,\Volume \\
&\leq C \int_{\W_{h_0}} ((1-\chi_h)\circ \mu_h)\, \pi^* \brk{|\nabla\qperp|^2 + |(\co\circ\Ppar + \ao)|^2
+ |(\qpar\circ\SFF)|^2}\, \eta\wedge\omega.
\end{split}
\eeq
Using Lemma~\ref{lm:resc} and Proposition~\ref{prop:Gh_bdd},
\begin{align*}
\int_{\W_{h_0}}((1-\chi_h)\circ\mu_h)\eta\wedge\omega  &\leq C' \dashintW (1-\chi_h) \Volume \\
&\leq C' h^{1/2}\dashintW |\Gh|^2\,\Volume = O(h^{1/2}).
\end{align*}
It follows that $(1-\chi_h)\circ\mu_h$ is the indicator function of a set of measure tending to zero with $h$.
So, since $\pi^* \brk{|\nabla\qperp|^2 + |(\co\circ\Ppar + \ao)|^2
+ |(\qpar\circ\SFF)|^2}$ is integrable in $\W_{h_0}$, inequality~\eqref{eq:del} implies that
$\Delta_h$ tends to zero as $h\to0$, which completes the proof.
\end{proof}

\begin{proposition}
\label{prop:Gh2b}
\[
\liminfH\,\,\frac14 \,\,\dashintW |\Gh^{(2)} + {\Gh^{(2)}}^T|^2\,\Volume \ge \Elim[F,\qperp].
\]
\end{proposition}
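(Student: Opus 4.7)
The plan is to decompose $\Gh^{(2)} = L_h + B$ into a ``fiber-odd'' piece
\[
L_h = \frac{A}{h} = \pi^*(\nabla\qperp - \qpar\circ\SFF)\circ\left(\frac{\lambda}{h}\otimes\pi^*\Ppar\right)\circ\pi^*\qo^{-1},
\]
which is linear in $\lambda$ (hence odd under the fiberwise involution $\xi\mapsto -\xi$) and uniformly bounded because $|\lambda/h|\le 1$, and a ``fiber-constant'' piece
\[
B = \pi^*[(\co\circ\Ppar+\ao)\circ\qo^{-1}],
\]
which is pulled back from $\S$ and independent of $h$. Expanding the square yields
\[
|\Gh^{(2)}+(\Gh^{(2)})^T|^2 = |L_h+L_h^T|^2 + 2\,\euc(L_h+L_h^T,\, B+B^T) + |B+B^T|^2,
\]
and I would handle the three terms separately.

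First, I would argue that the cross term vanishes in the $h\to0$ limit. The sequence $L_h+L_h^T$ is exactly of the form $h^{-1}(\pi^*\beta\circ\lambda)$ covered by Lemma~\ref{lm:wc} with $p=1$; the fiberwise odd symmetry of $\lambda$ over the radius-$h$ ball together with Corollary~\ref{cy:h2} (which controls the $O(\lambda)$ discrepancy between $\Volume$ and $\eta\wedge\omega$) shows that $L_h+L_h^T$ weakly converges to zero in the sense of Definition~\ref{df:wc}. Since $B+B^T$ is a fixed, $\pi$-pulled-back $L^2$ section (the $p=0$ case), pairing gives
\[
\limH\,\,\dashintW \euc(L_h+L_h^T,\,B+B^T)\,\Volume = 0.
\]

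Second, for the quadratic term in $L_h$, I would use the identity $\tfrac14|A+A^T|^2 = \tfrac12\tr(A^TA + A^2)$ and then rerun, verbatim, the algebraic reduction in the proof of Proposition~\ref{pr:limEh}: the fiberwise second moment $\limH \tfrac{1}{h^2}\upi(\lambda\otimes\lambda) = \kappa\,\go|_\S$ emerges from the geometry of the tubular ball, the symmetry $\nabla_c\qo_a^\alpha = \nabla_a\qo_c^\alpha$ holds because $dF = \qpar$ (Theorem~\ref{tm:FW22}), and the identity $\nabla\Pparq\circ\qperp + \Pparq\circ\nabla\qperp = 0$ comes from differentiating $\Pparq\circ\qperp = 0$. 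These reductions apply since $\qo \in \SO{n}$ almost everywhere by Proposition~\ref{prop:4.2}, so that $\Elim[F,\qperp]$ is given by the finite expression in~\eqref{eq:Elim}. The outcome is
\[
\limH\,\,\tfrac14\dashintW |L_h+L_h^T|^2\,\Volume = \Elim[F,\qperp].
\]

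Finally, since $|B+B^T|^2\ge 0$, discarding it yields the desired lower bound. The main obstacle will be the first step: carefully matching the weighted test-section framework of Lemma~\ref{lm:wc} to the structure of $L_h+L_h^T$, and justifying that the fixed section $B+B^T$ can be used as a test section via $L^2$-density of fiber-polynomial forms (which is built into Lemma~\ref{lm:wc}). After that, the computation of the $|L_h+L_h^T|^2$ limit is a direct port of the recovery-sequence calculation and is essentially routine.
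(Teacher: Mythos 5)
Your overall strategy is the right one and is essentially what the paper's one-line argument (``the integral is minimal for $\ao,\co=O(h^2)$'') amounts to when unpacked: split $\Gh^{(2)}=L_h+B$, drop the nonnegative $|B+B^T|^2$ term, show the cross term vanishes, and identify the remaining $\tfrac14\dashintW|L_h+L_h^T|^2\Volume$ with the recovery-sequence limit $\Elim[F,\qperp]$. That last step is indeed a verbatim port of the computation in the proof of Proposition~\ref{pr:limEh}, as you say.

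However, the justification you give for the cross term is flawed. You assert that $L_h+L_h^T$ weakly converges to zero in the sense of Definition~\ref{df:wc}. It does not. Indeed, $L_h+L_h^T$ is itself exactly of the form $h^{-1}\brk{\pi^*\beta\circ\lambda}$ appearing in Lemma~\ref{lm:wc} with $p=1$, so by that lemma weak convergence to zero would force $\dashintW \go\brk{L_h+L_h^T,\,L_h+L_h^T}\Volume\to 0$; but this quantity converges to $4\,\Elim[F,\qperp]$, which is in general nonzero. What you actually need, and what is true, is only the pairing against the single fiber-constant section $B+B^T$: fiberwise the integrand is a fixed linear function of $\lambda$, which integrates to zero over the radius-$h$ ball against the product form $\eta\wedge\omega$ by odd symmetry, and the discrepancy $\Volume-\eta\wedge\omega=O(\lambda)\,\eta\wedge\omega$ (Corollary~\ref{cy:h2}) contributes an $O(h)$ correction after applying Cauchy--Schwarz on the base. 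This direct oddness computation is the correct argument and replaces the appeal to Lemma~\ref{lm:wc}. With that repair, your proposal matches the paper's reasoning.
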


\begin{proof}
The integral
\[
\frac14 \,\,\dashintW |\Gh^{(2)} + {\Gh^{(2)}}^T|^2\,\Volume
\]
depends on the sections $\ao$ and $\co$. It is easy to see that for every finite $h>0$ this integral is minimal for $\ao,\co = O(h^2)$, hence
\[
\liminfH\,\,\frac14 \,\,\dashintW |\Gh^{(2)} + {\Gh^{(2)}}^T|^2\,\Volume \ge
\limH\,\, \frac{1}{h^2} \dashintW \Abs{\frac{A + A^T}{2}}^2\,\Volume = \Elim[F,\qperp],
\]
and the last identity was proved in Section~\ref{sec:recovery}.
\end{proof}

\begin{proposition}
\label{prop:Gh12a}
\[
\begin{split}
&\liminfH\,\,\frac12 \,\,\dashintW \chi_h\,\euc\brk{\Gh^{(1)} + {\Gh^{(1)}}^T,\Gh^{(2)} + {\Gh^{(2)}}^T}\,\Volume = \\
&\quad =  \liminfH\,\,\frac12 \,\,\dashintW \euc\brk{\Gh^{(1)} + {\Gh^{(1)}}^T,\Gh^{(2)} + {\Gh^{(2)}}^T}\,\Volume.
\end{split}
\]
\end{proposition}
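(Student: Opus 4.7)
The plan is to show that the discrepancy introduced by the truncation $\chi_h$ is negligible, i.e.\ that
\[
D_h \;=\; \dashintW (1-\chi_h)\,\euc\brk{\Gh^{(1)} + {\Gh^{(1)}}^T,\Gh^{(2)} + {\Gh^{(2)}}^T}\,\Volume \;\longrightarrow\; 0
\]
as $h\to0$. This is the cross-term analogue of Proposition~\ref{prop:Gh2a}, and I would mimic that argument combined with Cauchy--Schwarz.

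First, I would apply the Cauchy--Schwarz inequality to obtain
\[
|D_h| \;\le\; 4\,\brk{\dashintW |\Gh^{(1)}|^2\,\Volume}^{1/2}\brk{\dashintW (1-\chi_h)\,|\Gh^{(2)}|^2\,\Volume}^{1/2}.
\]
Next, I would verify that $\Gh^{(1)}$ is uniformly bounded in $L^2(\W_h)$. Writing $\Gh^{(1)} = \Gh - \Gh^{(2)}$, Proposition~\ref{prop:Gh_bdd} yields the $L^2$-bound on $\Gh$, while the explicit formula for $\Gh^{(2)}$, together with the pointwise estimate $|\lambda/h| \le 1$ on $\W_h$ and the $L^2$-boundedness of $\ao$, $\co$, $\nabla\qperp$, $\qpar$, and $\SFF$, yields the $L^2$-bound on $\Gh^{(2)}$. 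Hence the first factor is $O(1)$.

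For the second factor I would reuse the argument from the proof of Proposition~\ref{prop:Gh2a} essentially verbatim: after rescaling by $\mu_h$, the integrand $(1-\chi_h)|\Gh^{(2)}|^2$ is dominated by $(1-\chi_h)\circ\mu_h$ times the fixed integrable function $\pi^*\brk{|\nabla\qperp|^2 + |\co\circ\Ppar + \ao|^2 + |\qpar\circ\SFF|^2}$ on $\W_{h_0}$, and $(1-\chi_h)\circ\mu_h$ is the indicator of a set whose measure tends to $0$ as $h\to 0$ (by Chebyshev's inequality applied to the bound $\dashintW|\Gh|^2\Volume = O(1)$, the measure is $O(h^{1/2})$). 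Dominated convergence on $\W_{h_0}$ then forces this factor to vanish, and the proposition follows.

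The main obstacle is a bookkeeping one: ensuring that the $L^2$-bound on $\Gh^{(2)}$ is genuinely uniform in $h$ (which relies on $|\lambda/h|\le 1$ and on Lemma~\ref{lm:resc} to convert $\W_h$-averages into fixed $\W_{h_0}$-integrals), and invoking the small-measure argument in the same pulled-back framework as Proposition~\ref{prop:Gh2a}. Once both pieces are in place, the rest is Cauchy--Schwarz.
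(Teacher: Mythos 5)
Your proposal is correct and follows essentially the same route as the paper: truncation difference, Cauchy--Schwarz to split off $\Gh^{(1)}$, uniform $L^2$ bound on $\Gh^{(1)}$, and reuse of the small-measure argument from Proposition~\ref{prop:Gh2a} for the second factor. You are actually slightly more careful than the paper's one-line citation of Proposition~\ref{prop:Gh_bdd} in that you explicitly justify the $L^2$ bound on $\Gh^{(1)}$ by writing it as $\Gh - \Gh^{(2)}$ and bounding $\Gh^{(2)}$ via its explicit formula with $|\lambda/h|\le1$.
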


\begin{proof}
Consider the difference
\[
\begin{split}
\Delta_h &= \dashintW (1-\chi_h) \euc\brk{\Gh^{(1)} + {\Gh^{(1)}}^T,\Gh^{(2)} + {\Gh^{(2)}}^T}\,\Volume.
\end{split}
\]
Using the bilinearity of $\go$ and the Cauchy-Schwarz inequality,
\[
|\Delta_h|^2 \le 4 \brk{\dashintW |\Gh^{(1)}|^2\,\Volume} \brk{\dashintW |\chi_h \Gh^{(2)}|^2\,\Volume}.
\]
The first term on the right hand side is uniformly bounded by Proposition~\ref{prop:Gh_bdd} whereas the second term tends to zero by  the same argument as in the proof of Proposition~\ref{prop:Gh2a}.
\end{proof}

\begin{proposition}
\label{prop:Gh12b}
\[
\limH\,\,\frac12 \,\,\dashintW \euc\brk{\Gh^{(1)} + {\Gh^{(1)}}^T,\Gh^{(2)} + {\Gh^{(2)}}^T}\,\Volume = 0.
\]
\end{proposition}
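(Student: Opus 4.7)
The plan is to apply Lemma~\ref{lm:wc}: I show that $\Gh^{(2)}$ is, term by term, of the form $h^{j-p}(\pi^*\beta\circ\lambda^{\otimes p})$ used in that lemma, while $\Gh^{(1)}=\Gh-\Gh^{(2)}$ weakly converges to zero in the sense of Definition~\ref{df:wc}. The conclusion then follows by expanding the Frobenius pairing $\euc(\Gh^{(1)}+(\Gh^{(1)})^T,\Gh^{(2)}+(\Gh^{(2)})^T)$ into four bilinear terms and applying the lemma to each.

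For $\Gh^{(2)}$, unpacking the definitions shows that
\[
\frac{A}{h} = \pi^*\brk{(\nabla\qperp-\qpar\circ\SFF)\circ\qo^{-1}}\circ\brk{\frac{\lambda}{h}\otimes\pi^*\Ppar}
\]
matches the template with $p=1$, while $\pi^*\Brk{(\co\circ\Ppar+\ao)\circ\qo^{-1}}$ matches it with $p=0$; transposition in the $\R^n$ factor preserves this structure. So does taking the pointwise symmetric part.

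For $\Gh^{(1)}$ I would use the explicit expansion of $\Gh$ in Proposition~\ref{prop:Ghexpression} and verify that each summand weakly converges to zero. The term $\bo_h\circ\Pi\circ\pi^*\tqo_h^{-1}$ weakly converges to zero by Proposition~\ref{prop:bh} combined with the uniform $\SO{n}$-bound on $\tqo_h$ and its strong $L^2(\S)$-convergence to $\qo$. The linear residual
\[
\pi^*\Brk{(\ao_h+\co_h\circ\Ppar)\circ\tqo_h^{-1}-(\ao+\co\circ\Ppar)\circ\qo^{-1}}
\]
converges weakly to zero because $\ao_h\weakly\ao$ and $\co_h\weakly\co$ in $L^2(\S)$ while $\tqo_h^{-1}\to\qo^{-1}$ strongly in $L^2(\S)$ with a uniform $L^\infty$ bound. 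The curvature-correction term
\[
\frac{d\hf_h\circ\brk{\Pi-\sigma\oplus\iota+\sigma\circ\pi^*\SFF\circ(\lambda\otimes\pi^*\Ppar)}}{h}\circ\pi^*\tqo_h^{-1}
\]
is uniformly $O(h)$ by Lemma~\ref{lm:siPi} (which asserts the middle factor is $O(\lambda^{\otimes 2})$), and since $d\hf_h$ is $L^2$-bounded it strongly converges to zero in $L^2(\W_h)$. The analogous $p=1$ residual
\[
\pi^*\Brk{(\nabla\qperp_h-dF_h\circ\SFF)\circ\tqo_h^{-1}-(\nabla\qperp-\qpar\circ\SFF)\circ\qo^{-1}}\circ\brk{\frac{\lambda}{h}\otimes\pi^*\Ppar}
\]
is the pullback of a weakly-zero $L^2(\S)$-sequence composed with the test factor $\lambda/h\otimes\pi^*\Ppar$, using $\nabla\qperp_h\weakly\nabla\qperp$ (Proposition~\ref{pr:wcqh}), $dF_h\to\qpar$ strongly (Corollary~\ref{cy:boring}), and $\tqo_h\to\qo$ strongly. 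Finally, the tail
\[
-\pi^*(\nabla\qperp_h)\circ\brk{\lambda\otimes\pi^*\SFF\circ\brk{\frac{\lambda}{h}\otimes\pi^*\Ppar}}\circ\pi^*\tqo_h^{-1}
\]
is pointwise bounded by $C h\,|\pi^*\nabla\qperp_h|$ in $\W_h$, so it too converges strongly to zero in $L^2(\W_h)$.

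With each contribution to $\Gh^{(1)}$ verified to weakly converge to zero, and each contribution to $\Gh^{(2)}+(\Gh^{(2)})^T$ already in the test-sequence form of Lemma~\ref{lm:wc}, expanding the Frobenius pairing into bilinear pieces and invoking the lemma on each produces the stated limit. The main technical obstacle is the fourth summand above: checking that weak $L^2(\S)$-convergence of the ``coefficient'' on the base, once composed with a pullback and multiplied by the fiber factor $\lambda/h$, still qualifies as weak convergence in the sense of Definition~\ref{df:wc}. After the fiber push-forward $\piPush$ integrates the $\lambda$-moments into a bounded $L^\infty$-multiplier on $\S$ (as computed in the recovery-sequence proof), this reduces to the standard vanishing of a weakly-$L^2(\S)$ sequence paired against a strongly-$L^2(\S)$-convergent, $L^\infty$-bounded function — which is precisely the statement Lemma~\ref{lm:wc} encodes.
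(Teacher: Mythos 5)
Your plan follows the paper's proof essentially line for line: same decomposition $\Gh=\Gh^{(1)}+\Gh^{(2)}$, same term-by-term verification of weak convergence of $\Gh^{(1)}$ using Proposition~\ref{prop:Ghexpression} and the weak-times-strong-bounded-$L^\infty$ product rule, same identification of $\Gh^{(2)}$ with the test-form of Lemma~\ref{lm:wc}. One step you omit is not merely mechanical, though: $\Gh^{(1)}$ and $\Gh^{(2)}$ are endomorphism-valued (sections of $\R^n\otimes(\R^n)^*$), whereas Definition~\ref{df:wc} and Lemma~\ref{lm:wc} are stated for sections of $\pi^*\Lambda^l T^*\calM|_\S$ (with $\R^n$ coefficients). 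The paper reconciles the types at the outset by the identity
\[
\euc\brk{\Gh^{(1)} + {\Gh^{(1)}}^T,\Gh^{(2)} + {\Gh^{(2)}}^T} =
\go\brk{(\Gh^{(1)} + {\Gh^{(1)}}^T)\circ\pi^*\qo,\,(\Gh^{(2)} + {\Gh^{(2)}}^T)\circ\pi^*\qo},
\]
valid because $\qo\in\SO{n}$ a.e.\ (Proposition~\ref{prop:4.2}); after this composition each factor is a section of $\pi^*T^*\calM|_\S\otimes\R^n$, which Lemma~\ref{lm:wc} actually addresses. Your proposal as written states that $\Gh^{(1)}$ ``weakly converges to zero in the sense of Definition~\ref{df:wc}'' and that $\Gh^{(2)}$ ``is of the form $h^{j-p}(\pi^*\beta\circ\lambda^{\otimes p})$,'' but neither assertion type-checks without first composing with $\pi^*\qo$. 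This is not a deep obstruction — all your term-by-term arguments go through verbatim for $\Gh^{(i)}\circ\pi^*\qo$, absorbing the extra factor $\tqo_h^{-1}\circ\qo\to\id$ — but it needs to be said. Also, your concluding sentence slightly mischaracterizes Lemma~\ref{lm:wc}: the lemma is a characterization of weak convergence via a specific class of fiber-polynomial test forms, not itself a statement about pairing weak-against-strong $L^2(\S)$ sequences; the latter is an ingredient used inside its proof and in the verification of the individual terms.
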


\begin{proof}
Note that
\[
\euc\brk{\Gh^{(1)} + {\Gh^{(1)}}^T,\Gh^{(2)} + {\Gh^{(2)}}^T} =
\go\brk{(\Gh^{(1)} + {\Gh^{(1)}}^T)\circ\pi^*\qo,(\Gh^{(2)} + {\Gh^{(2)}}^T)\circ\pi^*\qo}.
\]
Using Proposition~\ref{prop:Ghexpression}, the sections $\Gh^{(1)}\circ \pi^*\qo$ can be rewritten as follows:
\[
\begin{split}
\Gh^{(1)}\circ \pi^* \qo  &=
\bo_h \circ\Pi\circ \pi^*(\tqo_h^{-1}\circ \qo)   \\
&+ \pi^*[(\ao_h + \co_h\circ\Ppar) \circ\tqo_h^{-1}\circ \qo] - \pi^*[\ao + \co \circ\Ppar] \\
&+ \pi^*\nabla\qperp_h \circ \brk{\frac{\lambda}{h}\otimes\pi^*\Ppar}  \circ\pi^*(\tqo_h^{-1} \circ \qo)
- \pi^*\nabla\qperp\circ \brk{\frac{\lambda}{h} \otimes\pi^*\Ppar} \\
&- \pi^* (dF_h\circ\SFF)\circ\brk{\frac{\lambda}{h}\otimes\pi^*\Ppar}  \circ\pi^*(\tqo_h^{-1}\circ \qo)
+ \pi^*(\qpar\circ\SFF)\circ\brk{\frac{\lambda}{h}\otimes\pi^*\Ppar} \\
&+\frac{d\hf_h\circ(\Pi  - \sigma\oplus\iota + \sigma\circ\pi^*\SFF\circ(\lambda\otimes\pi^*\Ppar))}{h} \circ\pi^*(\tqo_h^{-1}\circ \qo) \\
& - \pi^*(\nabla\qperp_h)\circ\left(\lambda \otimes \pi^*\SFF\circ\brk{\frac{\lambda}{h}\otimes\pi^*\Ppar}
\right)  \circ\pi^* (\tqo_h^{-1}\circ \qo).
\end{split}
\]
Consider the first line of the right hand side:  $\bo_h$ weakly converges to zero in the sense of Definition~\ref{df:wc} (Proposition~\ref{prop:bh}), whereas $\tqo_h^{-1}$ is bounded in $L^\infty(\S;\R^n\otimes T\calM|_\S)$ and strongly converges to $\qo^{-1}$ in $L^2(\S;\R^n\otimes T\calM|_\S)$. It is a known fact that the product of an $L^2$ weakly converging sequence and an $L^2$ strongly convergent sequence that is bounded in $L^\infty$ weakly converges in $L^2$ to the product of the limits. We therefore conclude that the first line weakly converges to zero in the sense of Definition~\ref{df:wc}. By a similar argument the second, third, and fourth lines weakly converge to zero as well. For the fourth line, we use also Proposition~\ref{prop:dFh-qpar}. The fifth line is $O(h)$ by Lemma~\ref{lm:siPi}, whereas the sixth line is $O(h)$ due to the $\lambda$-factor. Thus,  $\Gh^{(1)} \circ \pi^*\qo$ weakly converges to zero in the sense of Definition~\ref{df:wc}. Similarly, $\Gh^{(1)^T} \circ \pi^*\qo$ weakly converges to zero.
Noting that
\[
(\Gh^{(2)} + {\Gh^{(2)}}^T)\circ\pi^*\qo = \pi^*\beta_1 + \pi^*\beta_2\circ\frac{\lambda}{h}
\]
for suitable $\beta_1,\beta_2$,
we apply Lemma~\ref{lm:wc} to obtain the desired result.
\end{proof}

\begin{corollary}
\[
\liminfH\,\, J_h[f_h]  \ge \Elim[F,\qperp].
\]
\end{corollary}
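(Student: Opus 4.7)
The plan is essentially to assemble inequality \eqref{eq:liminf_ineq} with the four immediately preceding propositions, so the corollary follows as a direct consequence of work already done. I would begin from \eqref{eq:liminf_ineq}, which expresses $\liminfH J_h[f_h]$ as the sum of a quadratic term in $\Gh^{(2)} + {\Gh^{(2)}}^T$ and a cross term between $\Gh^{(1)} + {\Gh^{(1)}}^T$ and $\Gh^{(2)} + {\Gh^{(2)}}^T$, each truncated by the indicator $\chi_h$. The strategy is to treat these two terms separately, using the decomposition $\Gh = \Gh^{(1)} + \Gh^{(2)}$ where $\Gh^{(2)}$ carries the ``model'' contribution that recovers $\Elim[F,\qperp]$, while $\Gh^{(1)}$ carries only remainders that vanish weakly.

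For the diagonal term, I would first invoke Proposition~\ref{prop:Gh2a} to drop the indicator $\chi_h$ without changing $\liminfH$, and then apply Proposition~\ref{prop:Gh2b} to obtain
\[
\liminfH \,\tfrac14 \dashintW \chi_h \,|\Gh^{(2)} + {\Gh^{(2)}}^T|^2 \,\Volume \;\ge\; \Elim[F,\qperp].
\]
This step recovers the entire limit energy, since $\Gh^{(2)}$ was precisely defined so that $A/h$ appears in it (with the same $A$ used in Section~\ref{sec:recovery} to compute $\Elim$).

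For the cross term, I would first apply Proposition~\ref{prop:Gh12a} to remove $\chi_h$ without affecting $\liminfH$, and then Proposition~\ref{prop:Gh12b} to conclude that this term vanishes in the limit. The substantive content here is Proposition~\ref{prop:Gh12b}, which rests on the fact that $\Gh^{(1)} \circ \pi^* \qo$ weakly converges to zero in the sense of Definition~\ref{df:wc}, while $(\Gh^{(2)} + {\Gh^{(2)}}^T)\circ \pi^*\qo$ is precisely of the form $\pi^*\beta_1 + \pi^*\beta_2 \circ (\lambda/h)$ to which Lemma~\ref{lm:wc} applies.

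Adding the two bounds gives
\[
\liminfH J_h[f_h] \;\ge\; \Elim[F,\qperp] + 0 \;=\; \Elim[F,\qperp],
\]
which is the corollary. No obstacle of substance remains at this stage, since all heavy lifting has been done in the four propositions; the only care needed is to verify that the $\liminf$ of a sum is bounded below by the sum of a $\liminf$ and a $\lim$ (which is elementary) when combining the diagonal and cross contributions from \eqref{eq:liminf_ineq}.
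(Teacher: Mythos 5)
Your proposal matches the paper's proof exactly: both assemble inequality~\eqref{eq:liminf_ineq} with Propositions~\ref{prop:Gh2a}, \ref{prop:Gh2b}, \ref{prop:Gh12a}, and~\ref{prop:Gh12b} to bound the diagonal term below by $\Elim[F,\qperp]$ and show the cross term has limit zero. The paper states this as an immediate consequence without further elaboration, and your reasoning correctly fills in why the assembly works.
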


\begin{proof}
This is an immediate consequence of Eq.~\eqref{eq:liminf_ineq} and Propositions~\ref{prop:Gh2a}, \ref{prop:Gh2b},
\ref{prop:Gh12a}, and  \ref{prop:Gh12b}.
\end{proof}

\begin{corollary}
\[
\liminfH\,\, \Eh[f_h] \ge \Elim[F,\qperp].
\]
\end{corollary}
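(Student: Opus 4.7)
The plan is to chain together the three facts already established. Recall that at the start of the section we passed to a subsequence along which $\calE_h[f_h]$ achieves its liminf, so it suffices to prove the inequality for that subsequence; all further extractions below are harmless.

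First, I would invoke the trivial pointwise bound $\chi_h \le 1$, which gives $\tEh[f_h] \le \Eh[f_h]$ for every $h$, and therefore
\[
\liminfH \Eh[f_h] \ge \liminfH \tEh[f_h].
\]
Next, combining this with the identity $\limH (J_h[f_h] - \tEh[f_h]) = 0$ established earlier (via the quadratic expansion $\dist^2(\id+hG_h,\SO{n}) = |\tfrac{h}{2}(G_h+G_h^T)|^2 + O(h^3|G_h|^3)$ on the set $\{|G_h| < h^{-1/4}\}$), we obtain
\[
\liminfH \tEh[f_h] = \liminfH J_h[f_h].
\]
Finally, the previous corollary gives $\liminfH J_h[f_h] \ge \Elim[F,\qperp]$, and concatenating the three inequalities yields the claim.

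There is no real obstacle at this stage, since all the substantive work---the $G_h^{(1)}/G_h^{(2)}$ decomposition, the weak convergence of $G_h^{(1)}\circ\pi^*\qo$ in the sense of Definition~\ref{df:wc}, and the minimization argument identifying $\liminf \tfrac14\dashint|G_h^{(2)}+G_h^{(2)T}|^2\Volume$ with $\Elim[F,\qperp]$---has already been carried out in the preceding propositions. The only thing worth writing explicitly is the one-line chain of inequalities above. The essential content of the lower semicontinuity theorem lies not in this corollary but in the earlier splitting and the cross-term vanishing (Proposition~\ref{prop:Gh12b}), which required the delicate observation that $G_h^{(1)}\circ\pi^*\qo$ weakly converges to zero against the polynomial-in-$\lambda/h$ test forms produced by $G_h^{(2)}+G_h^{(2)T}$.
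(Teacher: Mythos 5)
Your proof is correct and matches the paper's (implicit) argument exactly: the corollary is obtained by chaining $\liminf \Eh \ge \liminf \tEh$ (from $\chi_h\le 1$) with $\liminf \tEh = \liminf J_h$ (from $J_h-\tEh\to 0$) and the preceding corollary $\liminf J_h \ge \Elim$. Nothing is missing, and your remark about having already passed to a minimizing subsequence at the start of the section is the right observation to make the argument airtight.
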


\section{Examples}
\label{sec:examples}

\subsection{Plates and shells}

Plates and shells correspond to the case of $n=3$ and $k=1$, that is, the limiting manifold $\S$ is a two-dimensional surface.  For $k=1$, the limiting energy functional $\Elim$ further simplifies, as the second term in \eqref{eq:Elim} vanishes. This is because for $k=1$,
\beq\label{eq:ppqnqp}
\Pperpq \circ\nabla \qperp=0.
\eeq
Indeed, let $u$ be a local unit length section of $\NS.$ Then, $\nabla u = 0$ because
\[
 0 = d \go(u,u) = 2 \go(u,\nabla u)
\]
and $\NS$ is one-dimensional. So,
\[
(\nabla \qperp)(u) = d(\qperp(u)) - \qperp(\nabla u) = d (\qperp(u)).
\]
On the other hand, since $\Pperp$ is an orthogonal projection, and $\qo$ is orthogonal,
\begin{align*}
&\go(\Pperpq \circ (\nabla \qperp)(u),u) = \euc((\nabla \qperp)(u),\qperp(u)) = \\
& \qquad \qquad \qquad \qquad = \euc(d(\qperp(u)),\qperp(u)) = \frac{1}{2} d\euc(\qperp(u),\qperp(u)) = 0.
\end{align*}
Since $u$ spans $\NS,$ equation~\eqref{eq:ppqnqp} follows.

Finally, we note that since $\qo\in\SO{3}$, it follows that $\qperp$ is unambiguously determined by $dF$, which means that the $F:\S\to\R^3$ fully characterizes the limiting configuration.
Noting that $\kappa=1/3$, the limiting functional for plates and shells is:
\beq
\Elim[F] = \frac{1}{3}\,\, \dashint_\S |\Pparq\circ\nabla \qperp  - \SFF|^2 \,\VolumeS.
\label{eq:plates}
\eeq

This integral has a well-defined physical meaning. It is the mean square difference between the induced second fundamental form $\Pparq\circ\nabla \qperp$ (the gradient of the normal to the surface) and the intrinsic second fundamental form, i.e., it is a bending energy.

The limiting energy \eqref{eq:plates} applies equally to  plates and shells, the only difference being whether $\SFF=0$ (plates) or $\SFF\ne0$ (shells). It also applies equally to Euclidean and non-Euclidean plates/shells. The plate/shell is Euclidean if there exists an immersion $F:\S\to\R^3$, such that $F^*\euc = \go|_\S$ and $\Pparq\circ\nabla \qperp  = \SFF$, or equivalently, if $\go|_\S$ and $\SFF$ satisfy the Gauss-Codazzi-Mainardi equations \cite{DOC76}. If $\go_\S$ and $\SFF$ are incompatible, then the plates/shell is residually-stressed and the minimizer of \eqref{eq:plates} has non-zero energy.

\subsection{Rods}

Rods correspond to the case $n=3$ and $k=2$, which means that $\S$ is a one-dimensional submanifold. For $n-k=1$ it is $dF$ that is uniquely determined by $\qperp$.

The remarkable fact in the case of a one-dimensional submanifold is that there exists a limiting configuration, $(F,\qperp)$, such that the limiting energy $\Elim$ vanishes, That is, $dF\oplus\qperp\in\SO{n}$ and
\beq
\Pperpq\circ\nabla\qperp=0
\Textand
\Pparq\circ\nabla\qperp=\SFF.
\label{eq:rods}
\eeq
This means that there exists a sequence of approximate minimizers $f_h$ for which $\Eh[f_h]=o(1)$. Hence, to obtain a finer limiting structure one should divide the physical elastic energy by a higher power of $h$.

We now show that a solution satisfying \eqref{eq:rods} does exist. In particular, we show that equations~\eqref{eq:rods} are equivalent to a system of linear ordinary differential equations. Let $p\in\S$.
Since $\qpar\circ\Pparq = \id - \qperp\circ\Pperpq$, it follows that
\[
\qpar\circ \Pparq\circ\nabla\qperp = \nabla\qperp - \qperp\circ\Pperpq \circ\nabla\qperp.
\]
Substituting using equations~\eqref{eq:rods}, we obtain
\beq
\nabla\qperp = \qpar\circ \SFF.
\label{eq:serret-frenet}
\eeq
On the other hand, equation~\eqref{eq:serret-frenet} immediately implies equations~\eqref{eq:rods}.

Moreover, an equation for $\qpar$ can be derived from~\eqref{eq:serret-frenet} as follows. Since for every $\xi\in\NS$ and $\eta\in T\S$
\[
\euc(\qperp(\xi),\qpar(\eta)) = 0,
\]
differentiating and using equation~\eqref{eq:serret-frenet}, we obtain
\[
\begin{split}
\euc(\qperp(\xi),\nabla_{\eta'} \qpar(\eta)) &= - \euc(\nabla_{\eta'}\qperp(\xi),\qpar(\eta)) \\
&=
- \euc(\qpar\circ\SFF(\eta',\xi),\qpar(\eta)) = -\go(\SFF(\eta',\xi),\eta).
\end{split}
\]
Introducing the metric adjoint of the second fundamental form $\hat{\SFF}:T\S\times T\S\to\NS$, we have
\[
\euc(\qperp(\xi),\nabla_{\eta'} \qpar(\eta)) = -\go(\xi,\SFF(\eta,\eta')) = - \euc(\qperp(\xi),\qperp\circ\SFF(\eta,\eta')),
\]
from which we conclude that
\beq
\nabla\qpar = -\qperp\circ \hat{\SFF}.
\label{eq:serret-frenet2}
\eeq
Since $\S$ is one-dimensional, \eqref{eq:serret-frenet} and \eqref{eq:serret-frenet2} form a  linear ordinary differential system, which can be solved globally.  In fact, they are generalized Serret-Frenet equations, which uniquely determine the shape of the limiting curve.

\section{Discussion}

The main contribution of this paper is to unify the analyses of dimensionally-reduced elasticity models, resulting in a limiting model that covers a variety of known cases as well as cases that have not yet been rigorously treated, such as non-Euclidean shells and rods. The generalization has been attained by performing the entire analysis within the framework of Riemannian geometry. While being at times technically cumbersome, the Riemannian formalism is the appropriate framework when considering incompatible elastic bodies, so that the notion of a Euclidean reference configuration becomes irrelevant.

The entire analysis rests on the so-called finite bending assumption, whereby there exists a family of mappings $f_h$, such that $\Eh[f_h] = O(1)$. As noted by Lewicka and Pakzad \cite{LP10}, this condition is equivalent to the existence of a $W^{2,2}$ isometric immersion of $(\S,\go|_\S)$ in $\R^n$. The equivalence between the two conditions in an immediate corollary of our results: if the finite bending assumption holds then we have shown that $f_h$ has a subsequence that \rr-converges to a $W^{2.2}(\S;\R^n)$ isometric immersion. Conversely, if a $W^{2,2}$ isometric immersion exists, then the recovery sequence defined in Section~\ref{sec:recovery} satisfies the finite bending assumption.

It should be noted that in the present work we assumed a specific elastic energy functional \eqref{eq:Eh}. Note that this energy is tolerant to local orientation reversal, which is clearly unphysical. A more general treatment would assume, as customary,  an arbitrary energy density along with the standard Lipschitz continuity and coercivity conditions.  We intentionally chose a specific energy in order to avoid additional technicalities; the analysis in \cite{FJM02b}, for example, indicates that  a more general energy can be addressed by the same techniques.

This work raises a number of questions of interest.
\begin{itemize}
\item
Under what conditions on $\go|_\S$ does the finite bending assumption hold? There exists a large amount of work on H\"older regular or smooth isometric immersions in Euclidean space. However, we are not aware of similar work for Sobolev maps.

\item
The proof of local rigidity, Theorem~\ref{th:rig1}, relies on the rigidity theorem proved in \cite{FJM02b} for mappings $\R^n\to\R^n$. Generalizations to mappings between Riemannian manifolds would be of much interest.

\item
How does $f_h$ approach the limit $(F,\qperp)$, and in particular, how does $\Eh[f_h] - \Elim[F,\qperp]$ scale with $h$. The (non-rigorous) analysis in \cite{ESK09} indicates that the deviation of $f_h$ from the limit $F$ is focused in a boundary layer of width $O(h^{1/2})$ in the vicinity of the boundary of $\S$, and consequently, $\Eh[f_h] - \Elim[F,\qperp] = O(h^{1/2})$.

\end{itemize}

\paragraph{Acknowledgements}
Acknowledgments: We are grateful to Hillel Aharoni and Michael Moshe for useful discussions.
RK was partially supported by the Israeli Science Foundation as well as by the Binational Israel-US Science Foundation. JS was partially supported by the Israel Science Foundation grant 1321/2009 and the Marie Curie International Reintegration Grant No. 239381.


\bibliographystyle{unsrt}
\bibliography{MyBibs}

\end{document}